\DeclareMathAlphabet{\mathpzc}{OT1}{pzc}{m}{it}
\numberwithin{equation}{section}
\renewcommand{\epsilon}{\varepsilon}
\def\({\mathopen{}\left(}
\def\){\right)\mathclose{}}
\def\<{\mathopen{}\left<}
\def\>{\right>\mathclose{}}
\definecolor{gold}{rgb}{0.85,.66,0}
\definecolor{cherry}{rgb}{0.9,.1,.2}
\definecolor{burgundy}{rgb}{0.8,.2,.2}
\definecolor{orangered}{rgb}{0.85,.3,0}
\definecolor{orange}{rgb}{0.85,.4,0}
\definecolor{olive}{rgb}{.45,.4,0}
\definecolor{lime}{rgb}{.6,.9,0}
\definecolor{green}{rgb}{.2,.7,0}
\definecolor{grey}{rgb}{.4,.4,.2}
\definecolor{brown}{rgb}{.4,.3,.1}
\def\makeautorefname#1#2{\AtBeginDocument{\expandafter\def\csname#1autorefname\endcsname{#2}}}
\newcommand{\mynewtheorem}[2]{
  \newaliascnt{#1}{equation}          
  \newtheorem{#1}[#1]{#2}
  \aliascntresetthe{#1}
  \makeautorefname{#1}{#2}
}
\numberwithin{substep}{step}
\numberwithin{subcase}{case}
\theoremstyle{remark}
\theoremstyle{definition}
\newtheorem*{convention*}{Convention}
\newtheorem*{conventions*}{Conventions}
\theoremstyle{introthm}
\newtheorem{introthm}{Theorem}
\newcommand\bbZ{{\mathbb Z}}
\newcommand\bbR{{\mathbb R}}
\newcommand\bbC{{\mathbb C}}
\title[Lagrangian Floer theory in divisor complement]{Monotone Lagrangian Floer theory in smooth divisor complements: I}
\author{Aliakbar Daemi, Kenji Fukaya}
\date{}
\begin{document}
\address{Department of Mathematics and Statistics, Washington University in St. Louis, MO 63130}\email{adaemi@wustl.edu}
\address{Simons Center for Geometry and Physics, State University of New York, Stony Brook, NY 11794-3636, USA} \email{kfukaya@scgp.stonytbrook.edu}

\maketitle

\begin{abstract}
In this paper, Floer homology for Lagrangian submanifolds in an open symplectic manifold given as the complement of a smooth divisor is discussed. The main new feature of this construction is that we do not make any assumption on positivity or negativity of the divisor. To achieve this goal, we use a compactification of the moduli space of pseudo-holomorphic discs into the divisor complement satisfying Lagrangian boundary condition that is stronger than the stable map compactification and is inspired by the compactifications that are used in relative Gromov--Witten theory. This is the first of a series of three papers, this compactification is introduced and some of its fundamental properties as a topological space, essential for the definition of Lagrangian Floer homology, are established. 
\end{abstract}

\vspace{0.25cm}
{
  \hypersetup{linkcolor=black}
  \tableofcontents
}

\section{Introduction}

Lagrangian Floer theory plays a central role in recent developments in symplectic topology; it has been used to study symplectic rigidity of Lagrangain submanifolds \cite{ALP:lag} and it lies at the heart of the homological mirror symmetry program \cite{Ko:HMS}. This theory associates a homology group to a pair of Lagrangians in a symplectic manifold. In order to define this invariant, one needs to make some restrictive assumptions on the Lagrangians and the underlying symplectic manifold. Hence, there are various flavors of Lagrangian Floer homology in the literature \cite{Flo88,oh,fooobook,fooobook2,AJ:immerse}. 

In this paper, we study Floer homology for Lagrangians in open symplectic manifolds obtained by removing a divisor from a closed symplectic manifold. The novelty of our construction is that we do not make any convexity assumption about the ends of such open manifolds. 
The main application that we have in mind is in gauge theory. Motivated by \cite{mw}, we are planning to use the construction of the present article and its subsequents to define symplectic instanton Floer homology for arbitrary ${\rm U}(N)$-bundles over 3-manifolds. As it is discussed in the last section of \cite{part3:FH}, it is plausible that the construction of this paper can be useful in various other contexts.

\subsection{Statement of Results}

Let $(X,\omega)$ be a compact symplectic  manifold and $\mathcal D$ be a codimension two symplectic submanifold of $X$. We call any such submanifold $\mathcal D$ of $X$ a {\it smooth divisor} in $(X,\omega)$.

\begin{definition}\label{defn1111}
	Let $L$ be a compact Lagrangian submanifold of $X\backslash \mathcal D$.
	We say $L$ is {\it monotone in $X\setminus \mathcal D$}, if there exists $c>0$ such that the following holds 
	for any $\beta \in {\rm Im}(\pi_2(X\setminus \mathcal D,L) \to \pi_2(X,L))$:
	\[
	   \omega(\beta)=c \mu(\beta).
	\]
	Here $\mu : H_2(X,L;\bbZ) \to \bbZ$ is the Maslov index associated to the 
	Lagrangian submanifold $L$. (See, for example, \cite[Subsection 2.1.1]{fooo:tech2-1}.) 
	The minimal {\it Maslov number} of a Lagrangian $L$ in $X \setminus \mathcal D$ is defined to be:
	\[
	  \inf \{\mu(\beta) \mid \beta \in {\rm Im}(\pi_2(X\setminus \mathcal D,L) \to \pi_2(X,L)),\omega(\beta) >0 \}.
	\]
\end{definition}
\begin{definition}
	Let $L_0, L_1$ be compact subspaces of $X\setminus \mathcal D$.
	We say $L_0$ is {\it Hamiltonian isotopic to $L_1$ in $X\setminus \mathcal D$} if there exists a 
	compactly supported time dependent Hamiltonian 
	$H : (X \setminus \mathcal D) \times [0,1] \to \bbR$ such that the Hamiltonian 
	diffeomorphism $\psi_H : X \setminus \mathcal D \to X \setminus \mathcal D$
	maps $L_0$ to $L_1$.
	Here $\psi_H$ is defined as follows.
	Let $H_t(x) = H(x,t)$ and $X_{H_t}$ be the Hamiltonian vector field associated to $H_t$. 
	We define $\psi^H_t$ by 
	\[
	  \psi^H_0(x) = x, \qquad \frac{d}{dt}\psi^H_t = X_{H_t} \circ \psi^H_t.
	\]
	Then $\psi_H := \psi^H_1$. We say that $\psi_H$ is the {\it Hamiltonian diffeomorphism associated to the 
	(non-autonomous) Hamiltonian $H$}.
\end{definition}
The main result of this series of papers is the following.

\begin{introthm}\label{mainthm}
	Let $L_0,L_1\subset X\setminus \mathcal D$ be compact, oriented and spin Lagrangian submanifolds 
	such that they are monotone in $X\setminus \mathcal D$.
	Suppose one of the following conditions holds:
	\begin{enumerate}
		\item[(a)] The minimal Maslov numbers of $L_0$ and of $L_1$ are both strictly greater than 2.
		\item[(b)] $L_1$  is Hamiltonian isotopic to $L_0$. 
	\end{enumerate}
	Then there is a Floer homology group $HF(L_1,L_0;X\setminus \mathcal D)$,
	which is a vector space over a Novikov ring depending only on the Hamiltonian isotopy classes of $L_0$ and $L_1$.
	If $L_0$ is transversal to $L_1$ then we have:
	\[
	  \dim(HF(L_1,L_0;X\setminus D)) \le \# (L_0 \cap L_1).
	\]
	and if $L_0 = L_1 =L$, then there exists a spectral sequence 
	whose $E^2$ page is the singular homology group of $L$ and which converges to 
	$HF(L,L;X\setminus \mathcal D)$.
\end{introthm}
See \cite{part3:FH} for a more detailed and slightly stronger version of this theorem.

In \cite{Flo88}, Floer proved the analogue of Theorem \ref{mainthm} in the case that $\pi_2(X,L_i)=0$ and the coefficient ring is $\bbZ/2\bbZ$. In this case, the analogue of the spectral sequence for the Floer homology of the pair $(L,L)$ collapses in the second page and the Floer homology is isomorphic to singular homology of $L$ \cite{Flo88, Flo89}. Oh generalizes Floer's construction to the case that $L_0$ and $L_1$ are monotone in $X$ \cite{oh}. He also constructed a spectral sequence from the homology of $L$ to the Lagrangian Floer homology of the pair $(L,L)$ in \cite{oh-spectral}. (See also \cite[Chapter 2]{fooobook} and \cite{Cornea-Biran}.). 

The main new feature of Theorem \ref{mainthm} is that we assume monotonicity of $L_0$ and $L_1$ only in $X\setminus \mathcal D$. Roughly speaking, the Floer homology $HF(L_1,L_0;X\setminus \mathcal D)$ is defined using only holomorphic disks which `do not intersect' $\mathcal D$. Therefore, Theorem \ref{mainthm} can be regarded as an extension of Oh's monotone Floer homology \cite{oh} to open manifolds where the geometry at infinity is controlled by a smooth divisor. As we mentioned earlier, this extension of Floer homology is partly motivated by monotone Lagrangian submanifolds in divisor complements which are constructed by gauge theory \cite{mw,DF}. 

There are various other special cases of Theorem \ref{mainthm} which already appear in the literature. In the case that $X\setminus \mathcal D$ is convex at infinity,  the methods of \cite{fooobook,fooobook2}  can be used to define a Floer homology group $HF(L_1,L_0;X\setminus \mathcal D)$ satisfying the properties mentioned in Theorem \ref{mainthm}. In particular, this setup can be applied to the case that each component of $\mathcal D$ is a positive multiple of the Poincar\'e dual of $[\omega]$, the cohomology class of the symplectic form. Starting with the remarkable work of Seidel \cite{seidel:ICM,seidel:quartic}, such Floer homology groups have been used to study Fukaya category of $X$ and to verify homological mirror symmetry for some special examples.

As in any other versions of Lagrangian Floer homology, the main geometrical input in the definition of the Floer homology of Theorem \ref{mainthm} is the moduli space of holomorphic maps from the standard disc to $X$ (equipped with appropriate almost complex structures), which satisfy Lagrangian boundary conditions. There is a standard compactification of this moduli space called the {\it stable map compactifiaction}, which plays an  essential role in the definition of previous versions of Lagrangian Floer homology. This compactification, however, is not suitable for our purposes. 

A key observation for this series of papers is that this issue can be resolved by a different and stronger compatification of the above moduli space of holomorphic discs, which we we call {\it RGW compactification}. The definition of this compatctification is inspired by the theory of {\it Relative Gromov--Witten} invariants, where one uses the moduli spaces of holomorphic maps from a closed surface to $X$, which intersect a divisor $\mathcal D$ in a prescribed way, to construct numerical invariants of $(X,\mathcal D)$. There are also formal similarities between the RGW compactification and the compactification of the moduli spaces used in symplectic field theory.

In Section \ref{main-idea} of the present paper, we explain in more detail why stable map compactifiaction comes short for our purposes, and how our work is related to several approaches to Relative Gromov--Witten theory. The definition of RGW compactification as a set is given in Section \ref{Sec:RGW-Compactification}, and in Section \ref{sec:topology} after defining the topology of this set we prove the compactness of this topological space. In our second paper of this series \cite{part2:kura}, we study the analytical features of the RGW compactification. In particular, we show that this space admits a Kuranishi structure with boundary and corners. We believe that the analytical methods of \cite{part2:kura} provide an approach to address some of the foundational questions for relative Gromov-Witten invariants for a pair of a symplectic manifold and a smooth divisor. In the third paper \cite{part3:FH}, we use the results of the present paper and \cite{part2:kura} to prove Theorem \ref{mainthm}.

{\it Acknowledgements.}
We thank Paul Seidel, Mark Gross, Mohammad Tehrani and Aleksey Zinger for helpful conversations. We also thank the anonymous referee for giving several helpful comments on an earlier version of this series of papers. We are grateful to the Simons Center for Geometry and Physics for providing a stimulating environment for our collaboration on this project.


\section{Main idea of the construction}\label{main-idea}
Suppose $(X,\omega)$ is a closed symplectic manifold. Any flavor of Lagrangian Floer homology of two Lagrangians $L_0$ and $L_1$ in a symplectic manifold $(X,\omega)$ is defined using holomorphic strips into $X$ satisfying Lagrangian boundary conditions where the holomorphic curve equation is defined with respect to a tame almost complex structure $J$ on $X$. To be more specific, let $p,q \in L_0 \cap L_1$, and consider maps 
\[
  u : \bbR\times [0,1] \to X
\]
that are $J$-holomorphic and 
\begin{equation}\label{asymptotic}
	u(\tau,0) \in L_0,\qquad u(\tau,1) \in L_1,\qquad \lim_{\tau\to -\infty} u(\tau,t) = p,
	\qquad \lim_{\tau\to +\infty} u(\tau,t) = q.
\end{equation}
Any such map represents a homology class in the following sense.
\begin{definition}\label{defn2121}
        We say (not necessarily holomorphic) maps $u,\,u':\bbR\times [0,1] \to X$ satisfying \eqref{asymptotic} are 
        {\it homologous} to each other if there exists $v : \Sigma \to X$ with the following properties.
        \begin{enumerate}
	        \item $\Sigma$ is an oriented 3 dimensional manifold with corners. $\partial \Sigma$ is 
	        identified with $(\bbR \times [0,1] \times \{0,1\})\cup S_0 \cup S_1$,
        		where $\partial S_0 \cong \bbR \times \{0\}\times \{0,1\}$ and $\partial S_1 \cong \bbR \times \{1\}\times \{0,1\}$.
	        \item $v : \Sigma \to X$ is a continuous map.
        		\item $v(\tau,t,0) = u(\tau,t)$ and $v(\tau,t,1) = u'(\tau,t)$.
        \item $v(S_0) \subset L_0$, $v(S_1) \subset L_1$.
        \item Complement of a compact subspace of $\Sigma$ is identified with 
        $((-\infty,-C] \times [0,1]^2) \cup ([C,\infty) \times [0,1]^2)$
        and 
        $$
        \lim_{\tau \to -\infty} v(\tau,x) = p,
        \qquad
        \lim_{\tau \to +\infty} v(\tau,x) = q.
        $$
        \end{enumerate}
	The set of such homology classes is denoted  by $\Pi_{2}(X;L_1,L_0;p,q)$.
\end{definition}

For $\beta \in \Pi_{2}(X;L_1,L_0;p,q)$, we define $\mathcal M^{\rm reg}(L_1,L_0;p,q;\beta)$ to be the set of all equivalence classes of $J$-holomorphic maps $u :\bbR\times [0,1] \to X$ satisfying \eqref{asymptotic} and representing $\beta$ with respect to the equivalence relation given by translation along the $\bbR$ factor of $\bbR\times [0,1]$. Namely, $u \sim u'$, if there exists $\tau_0$ such that $u'(\tau,t) = u(\tau+\tau_0,t)$.

The Lagrangian Floer homology group $HF(L_1,L_0;X)$ is the homology of a chain complex $(CF(L_1,L_0),\partial)$ where $CF(L_1,L_0)$ is the vector space generated by the elements of $L_0 \cap L_1$ and $\partial:CF(L_1,L_0)\to CF(L_1,L_0)$ is defined as
\begin{equation} \label{differential}
  \partial ([p]) = \sum_{q,\beta} \#\mathcal M^{\rm reg}(L_1,L_0;p,q;\beta) [q].
\end{equation}
Here the sum on the right hand side is taken over all $(q,\beta)$ such that the {\it virtual dimension} of $\mathcal M^{\rm reg}(L_1,L_0;p,q;\beta)$ is $0$. 

As the next step, moduli spaces $\mathcal M^{\rm reg}(L_1,L_0;p,q;\beta)$ of virtual dimension $1$ are used to show that $\partial$ is a differential. In fact, one first compactifies these moduli spaces and then characterizes the coefficient of $[q]$ in  $\partial\circ \partial([p])$ in terms of the boundary points of the compactified moduli spaces. The foundational issue is that one should also expect other contributions to the boundary of the moduli spaces in correspondence to the {\it disc bubbles}. 

To spell this out in more detail, we need to consider other types of moduli spaces of holomorphic curves. First let $\mathcal M^{\rm reg}_{0,1}(L_1,L_0;p,q;\beta)$ and $\mathcal M^{\rm reg}_{1,0}(L_1,L_0;p,q;\beta)$ be the $J$-holomorphic maps $u :\bbR\times [0,1] \to X$ satisfying \eqref{asymptotic} and representing $\beta$. Thus these spaces agree with each other and their quotient with respect to the translation action is $\mathcal M^{\rm reg}(L_1,L_0;p,q;\beta)$. We define the evaluation maps ${\rm ev}_{0,1} : \mathcal M^{\rm reg}_{0,1}(L_1,L_0;p,q;\beta) \to L_0$ and ${\rm ev}_{1,0} : \mathcal M^{\rm reg}_{1,0}(L_1,L_0;p,q;\beta)\to L_1$ by
\[
  {\rm ev}_{0,1}(u) = u(0,0),\qquad{\rm ev}_{1,0}(u) = u(0,1).
\]
The set $\mathcal M^{\rm reg}_{0,1}(L_1,L_0;p,q;\beta)$ can be regarded as the moduli space of marked holomorphic strips $(u,z_0)$ modulo the translation action where $z_0$ is a marked point on  $\bbR \times \{0\}$. Any such marked strip has a unique representative where $z_0 = (0,0)$. Similarly, $\mathcal M^{\rm reg}_{1,0}(L_1,L_0;p,q;\beta)$ can be regarded as a moduli space of marked strips $(u,z_1)$ modulo the translation action where $z_1$ is a marked point on $\bbR \times \{1\}$. See Definition \ref{defn33strip} for the generalization where we allow more marked points on $\bbR \times \{0\}$ and $\bbR \times \{1\}$.

For any $\alpha$ in the image $\Pi_2(X,L;\bbZ)$ of the Hurewicz homomorphism $\pi_2(X,L) \to H_2(X,L;\bbZ)$, we define $\mathcal M^{\rm reg}_1(L;\alpha)$ to be the set of all equivalence classes of $J$-holomorphic maps maps $u : (D^2,\partial D^2) \to (X,L)$ in the homotopy class $\alpha$, where $u \sim u'$ if there exists a bi-holomorphic map $v : D^2 \to D^2$ such that $u \circ v = u'$ and $v(1) = 1$. We also define the evaluation map ${\rm ev} : \mathcal M^{\rm reg}_1(L;\alpha)\to L$ by ${\rm ev}(u) = u(1)$. When the choice of $L$ is clear from the context, we write $\mathcal M^{\rm reg}_1(\alpha)$ for $\mathcal M^{\rm reg}_1(L;\alpha)$.

There is a topology, called the stable map topology, on the spaces
\begin{equation}\label{4-moduli-pre-cpct}
  \mathcal M^{\rm reg}(L_1,L_0;p,q;\beta),\hspace{.3cm}
  \mathcal M^{\rm reg}_{0,1}(L_1,L_0;p,q;\beta),\hspace{.3cm}
  \mathcal M^{\rm reg}_{1,0}(L_1,L_0;p,q;\beta),\hspace{.3cm}
  \mathcal M^{\rm reg}_1(L;\alpha).
\end{equation}
See Subsection \ref{stablemaptopology} for a review of the stable map topology. There are also compactifications of these topological spaces denoted by
\begin{equation}\label{4-moduli}
  \mathcal M(L_1,L_0;p,q;\beta),\hspace{.5cm}
  \mathcal M_{0,1}(L_1,L_0;p,q;\beta),\hspace{.5cm}
  \mathcal M_{1,0}(L_1,L_0;p,q;\beta),\hspace{.5cm}
 \mathcal M_1(L;\alpha).
\end{equation}
These compactifications are metrizable. The evaluation maps ${\rm ev}$ naturally extends to maps to the compactified spaces. We use the same notation to denote these extensions.

In the compactification $\mathcal M(L_1,L_0;p,q;\beta)$ of $ \mathcal M^{\rm reg}(L_1,L_0;p,q;\beta)$ we add $J$-holomorphic maps where disc and sphere bubbles and broken strips are allowed. In particular, included in this compactification we can identify three types of {\it boundary} points. The first type corresponds to the products
\begin{equation}\label{type-I}
  \mathcal M(L_1,L_0;p,r;\beta_1)\times \mathcal M(L_1,L_0;r,q;\beta_2),
\end{equation}
where $r \in L_0\cap L_1$, $\beta_1 \in \Pi_2(L_1,L_0;p,r)$ and $\beta_2 \in \Pi_2(L_1,L_0;r,q)$ such that $\beta_1 \# \beta_2 = \beta$. Here $\#$ denotes the concatenation maps
\[
  \Pi_{2}(X;L_1,L_0;p,r)\times \Pi_{2}(X;L_1,L_0;r,q)\to \Pi_{2}(X;L_1,L_0;p,q).
\]  
The second and the third types correspond to 
\begin{equation}\label{type-II-III}
  \mathcal M_{0,1}(L_1,L_0;p,q;\beta_0)\times_{L_0} \mathcal M_1(L_0;\alpha_0),\hspace{1cm}
  \mathcal M_{1,0}(L_1,L_0;p,q;\beta_1)\times_{L_1} \mathcal M_1(L_1;\alpha_1),
\end{equation}
where $\beta_0, \beta_1 \in \Pi_2(L_1,L_0;p,q)$, $\alpha_0 \in \Pi_2(X,L_0;\bbZ)$ and $\alpha_1 \in \Pi_2(X,L_0;\bbZ)$ satisfy $\beta_0 \# \alpha_0 = \beta$ and $\beta_1 \# \alpha_1 = \beta$. Here we use the concatenation maps
\begin{equation}
	\aligned
	& \# : \Pi_{2}(X;L_1,L_0;p,q)\times \Pi_{2}(X;L_0,\bbZ)\to \Pi_{2}(X;L_1,L_0;p,q), \\
	& \# : \Pi_{2}(X;L_1,L_0;p,q)\times \Pi_{2}(X;L_1,\bbZ)\to \Pi_{2}(X;L_1,L_0;p,q).
	\endaligned
\end{equation}
See Figures \ref{Figure1}, \ref{Figure2} and \ref{Figure3} for schematic pictures of the three types of boundary points.

\begin{figure}[h]
\centering
\includegraphics[scale=0.55]{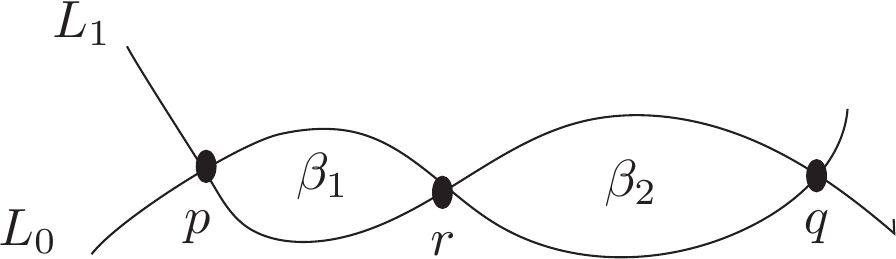}
\caption{A boundary element of type (1)}
\label{Figure1}
\end{figure}
\begin{figure}[h]
\centering
\includegraphics[scale=0.45]{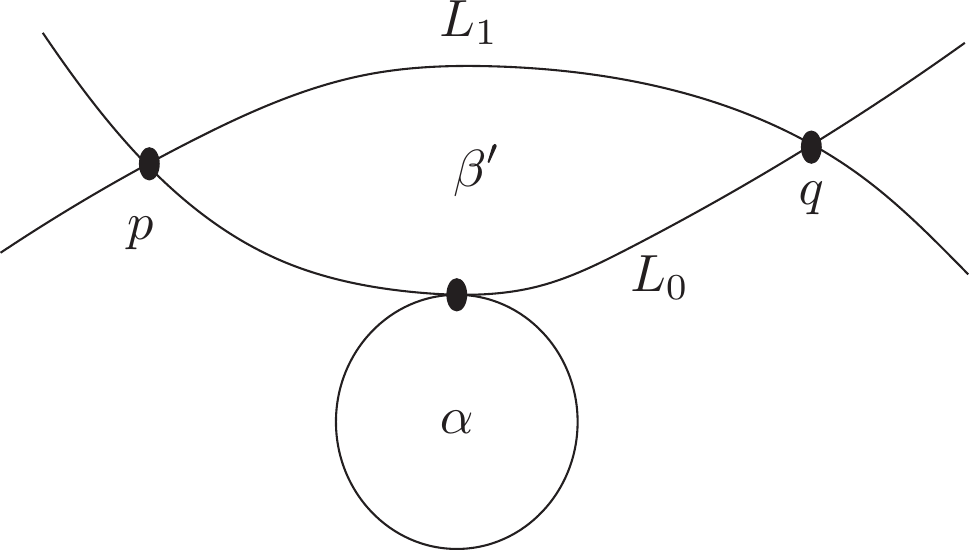}
\caption{A boundary element of type (2)}
\label{Figure2}
\end{figure}
\begin{figure}[h]
\centering
\includegraphics[scale=0.45]{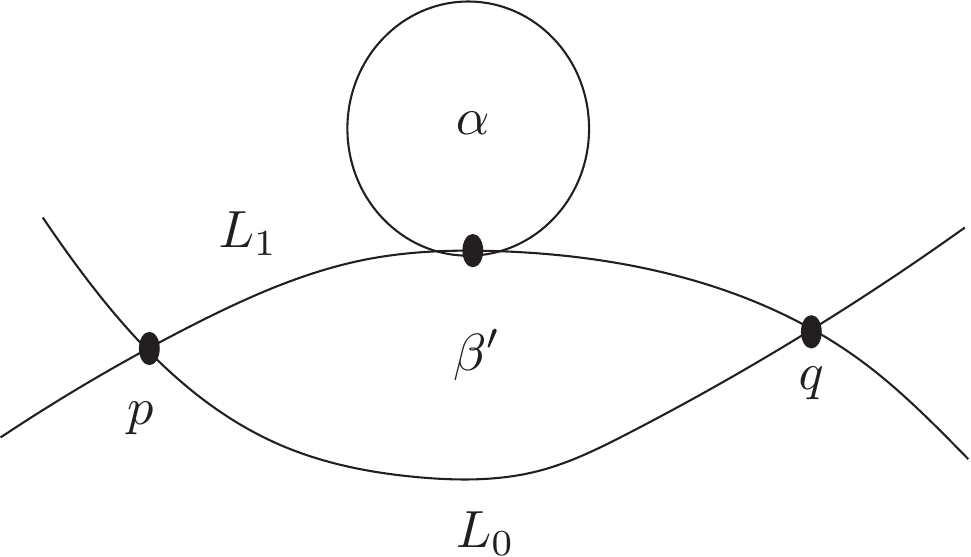}
\caption{A boundary element of type (3)}
\label{Figure3}
\end{figure}

In order to use the above moduli spaces in the construction of Lagrangian Floer homology, we need some sort of smooth structures on them. A general approach to achieve this is to show that there is a structure of a Kuranishi structure on any of these moduli spaces. Roughly speaking, a Kuranishi strcuture on a topological space $M$ implies that $M$ is locally homeomorphic to the zero set of a smooth map defined on a manifold (or more generally an orbifold), and the transition maps between such charts are well-behaved with respect to this structure. All of the moduli spaces in \eqref{4-moduli} admit Kuranishi structures with boundary and corners. In the case of $\mathcal M(L_1,L_0;p,q;\beta)$, the (normalized) boundary\footnote{See \cite[Definition 8.4]{fooo:tech2-1} for the definition of normalized boundary.} of this space as a Kuranishi space is the union of the three types of Kuranishi spaces given in \eqref{type-I} and \eqref{type-II-III}. Moreover, if the above Lagrangians are (relatively) spin, then all of the above Kuranishi structures are orientable. See \cite{fooobook2} for the proofs of the above claims.

Now, we turn back to showing that the operator $\partial$ in \eqref{differential} is a differential of a chain complex. The count of boundary elements of the first type in \eqref{type-I} gives the coefficient of $[q]$ in $\partial\circ \partial([p])$. Since the signed count of the boundary elements of a Kuranishi space of dimension $1$ is zero, this implies that $\partial\circ \partial=0$, assuming the boundary elements of the second the third types in \eqref{type-II-III} are empty. However, this does not happen in general and $\partial$ might not be a differential. In the special case that  $L_0,L_1$ are monotone in $X$ with minimal Maslov number greater than $2$, the contribution of the boundary points in \eqref{type-II-III} is trivial. This gives rise to the Oh's construction of Floer homology of monotone Lagrangians \cite{oh}.

In order to prove Theorem \ref{mainthm} where the Lagrangians $L_i$ are monotone only in $X \setminus \mathcal D$, we 
may try to restrict $\beta$ to the classes which satisfy the following additional condition.
\begin{conds}\label{cond1}
	We say $\alpha \in \Pi_2(X,L)$ has vanishing algebraic intersection 
	with $\mathcal D$, if
	\begin{equation}\label{form2626}
		[\alpha] \cdot \mathcal D = 0.
	\end{equation}
	Similarly,  $\beta \in \Pi_2(X;L_1,L_0;p,q)$ has vanishing algebraic intersection with $\mathcal D$, if
	\begin{equation}
		[\beta] \cdot \mathcal D = 0.
	\end{equation}
\end{conds}
In the definition of \eqref{differential}, suppose we only use homology classes $\beta \in \Pi_2(X;L_1,L_0;p,q)$ satisfying Condition \ref{cond1}. Then we might hope that the monotonicity of $L_0$, $L_1$ in $X\setminus \mathcal D$ allows us to repeat Oh's argument and avoid boundary elements of the second and the third types in \eqref{type-II-III}. This idea, however, does not work in general. Suppose $\beta$ has vanishing algebraic intersection with $\mathcal D$, and $\mathcal M(L_1,L_0;p,q;\beta)$ has virtual dimension $1$. Then this space could have boundary elements of the first type as in \eqref{type-I} associated to homology classes $\beta_1$, $\beta_2$ such that $\beta_1 \# \beta_2 = \beta$, $[\beta_1] \cdot \mathcal D<0$ and $[\beta_2] \cdot \mathcal D >0$. (See Figure \ref{Figure4} below.) Therefore, we would face again with a similar issue to show that $\partial\circ \partial =0$. 
(See Figure \ref{Figure4} below.)
\begin{figure}[h]
\centering
\includegraphics[scale=0.4]{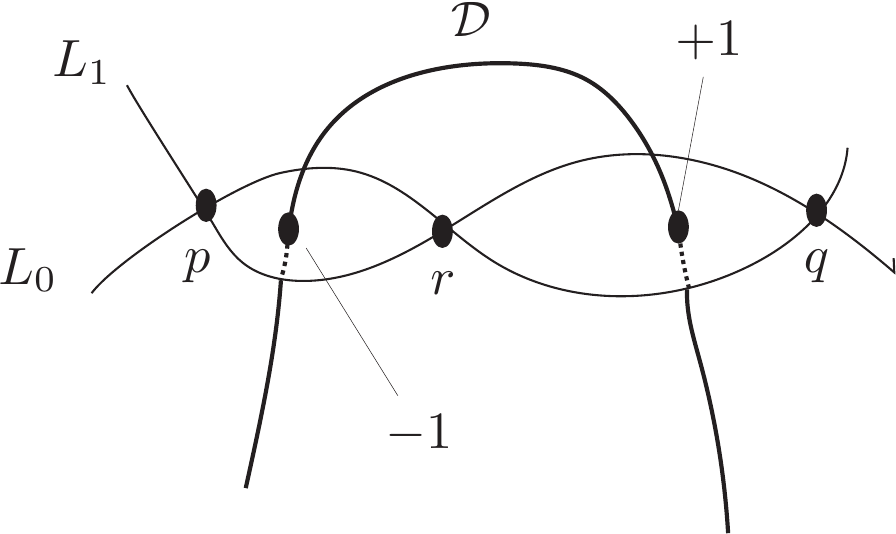}
\caption{Monotonicity is broken}
\label{Figure4}
\end{figure}

The arrangement in Figure \ref{Figure4} can be avoided by picking an appropriate almost complex structures on $X$. For instance, if $J$ is integrable in a neighboohod of $\mathcal D$ 
and $\mathcal D$ is a complex submanifold, then any $J$-holomorphic curve has a positive intersection with $\mathcal D$, which implies the claim in the following lemma. (See Subsection \ref{subsec:sympproj} for a more flexible family of almost complex structures on $X$ that satisfy a similar property.)
\begin{lemma}\label{lem27}
	Suppose the almost complex structure $J$ on $X$ is integrable in a neighboohod of $\mathcal D$.
	If $\mathcal M^{\rm reg}(L_1,L_0;p,q;\beta)$ (resp. $\mathcal M^{\rm reg}_1(L;\alpha)$) is nonempty, 
	then $\beta\cdot \mathcal D \ge 0$ (resp. $\alpha\cdot \mathcal D \ge 0$).
\end{lemma}

\begin{figure}[h]
\centering
\includegraphics[scale=0.4]{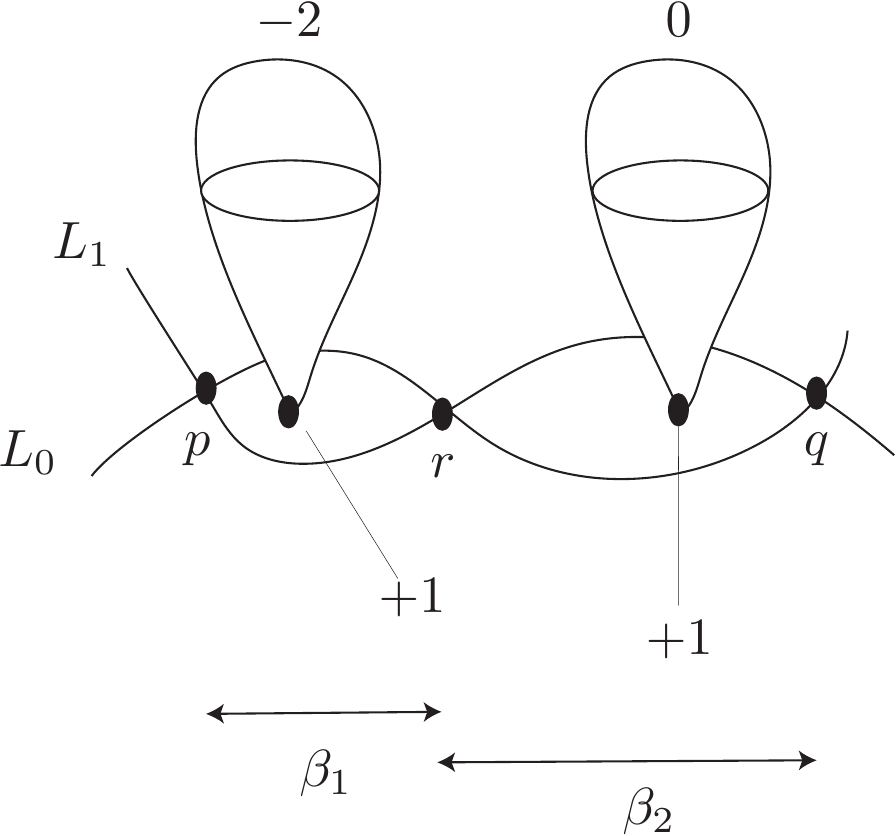}
\caption{Monotonicity is broken: 2}
\label{Figure5}
\end{figure}

Even if $J$ satisfies the assumption in Lemma \ref{lem27}, we may still have an arrangement as in Figure \ref{Figure5} that causes the issue we raised above. In Figure \ref{Figure5} two sphere bubbles are completely contained in the divisor $\mathcal D$. The numbers $0$ and $-2$ written at the top of the sphere components are the intersection numbers of those sphere bubbles with $\mathcal D$. (Note that under the assumption in Lemma \ref{lem27}, the intersection numbers can be nonpositive only when the spheres are contained in the divisor.) The two strips (joining $p$ to $r$ and $r$ to $q$) intersect with $\mathcal D$ at the {\it roots} of the sphere bubbles. The intersection number of the strips with $\mathcal D$ are both $+1$ as drawn in the figure. (This number is necessarily positive because of Lemma \ref{lem27}.) In this case, $\beta_1$ and $\beta_2$ are homology classes of the strips together with sphere bubbles on them. Therefore, $\beta_1 \cdot [\mathcal D] = -1$ and $\beta_2 \cdot [\mathcal D] = +1$.

The key idea to resolve this issue is to replace stable map topology with a stronger topology, called RGW topology. The main issue with the stable map compactification (and its Kuranishi structure) is its insensitivity with respect to the assumption that $\mathcal D$ is a divisor and the almost complex structure $J$ has a special behavior in a neighborhood of this submanifold of $X$. The compactified moduli spaces with respect to RGW topology admit Kuranishi structures that take into account the special geometry of $X$ around the divisor $\mathcal D$. In particular, an analogue of Lemma \ref{lem27} is built into the definition of the RGW topology. In the rest of this paper, we introduce the RGW topology and discuss the compactification of the moduli spaces with respect to this topology. Kuranishi structures on these moduli spaces are constructed in \cite{part2:kura,part3:FH} and then we explain there how we can use the monotonicity of $L_i$ in $X \setminus \mathcal D$ to adapt Oh's argument to prove Theorem \ref{mainthm}.

Before closing this section, we explain how the method of this paper compares with the existing works on relative Gromov--Witten invariants. Relative Gromov-Witten theory provides invariants of a pair $(X,\mathcal D)$ of a symplectic manifold together with a divisor using the moduli spaces of pseudo-holomorphic curves $u$ from a Riemann surface of surface $\Sigma_g$ with $k$ marked points $\{w_1,\dots,w_k\}\subset \Sigma_g$ into $X$ such that $u^{-1}(\mathcal D)$ is equal to $\{w_1,\dots,w_k\}$ and the multiplicity of the intersection of $u$ at the point $w_i$ with the divisor $\mathcal D$ is a fixed positive integer $m_i$. (More generally, one can consider the case that $\mathcal D$ is a normal crossing divisor.) One can approach this theory with the methods of algebraic geometry (assuming integrability of the pair $(X,\mathcal D)$) and symplectic geometry. On the algebro-geometric side, such a theory is developed and studied in J. Li \cite{JLi2,JLi}, Gross and Siebert \cite{groSie} and others. In the category of symplectic manifolds, relative Gromov-Witten invariants were defined in several works under various assumptions in the works of Li and Ruan \cite{LR}, Ionel and T. Parker \cite{IP,IP2}, B. Parker \cite{pa,pa2,pa3,pa5,pa4}, Tehrani and Zinger \cite{TZ} and Tehrani \cite{T}. (See also \cite{T2} on open Gromov-Witten invariants.) A review of some of these works can be found in \cite{TZ:exp}.

As it was mentioned in the introduction, relative Gromov-Witten theory is an important source of inspiration for parts of our construction. In particular, the basic idea of the notion of RGW topology and its compactification already appears in \cite[Proposition 7.3]{IP}, \cite[Theorem 6.1]{pa2} and \cite[Definition 3.7]{T}. However, we decided to give a self-contained review of the definition of the RGW compactification and the RGW topology on this space in our present setup. One reason is that the above works on relative Gromov-Witten invariants concern moduli of pseudo-holomorphic maps from source curves that have empty boundary. In the case of pseudo-holomorphic curves satisfying Lagrangian boundary condition on the boundary of the source curve, several new points about our moduli spaces need to be further studied. For instance, our construction of Floer theory requires an explicit understanding of codimension one boundary, where there is a new feature in our situation which does not appear in the previous works on Lagrangian Floer theory (see \cite[Subsection 2.2]{part3:FH} for more details). 

In order to construct the version of Floer homology promised in Theorem \ref{mainthm}, we use virtual fundamental chain techniques. In the case of relative Gromov-Witten theory, the relevant moduli spaces are expected to be manifolds without boundary (more precisely Kuranishi spaces without boundary), and one needs to associate a virtual fundamental {\it cycle} to any of them. On the other hand, the moduli spaces relevant for us are expected to have boundary and hence the construction of virtual fundamental chains in this setup would be a more delicate task. To achieve this goal, we need a detailed description of the strata of the RGW compactification. In Section \ref{Sec:RGW-Compactification}, we use some combinatorial data in the form of graphs with additional decorations to give such descriptions.

To the best of our understanding, a detailed construction of virtual fundamental cycles for relative Gromov-Witten invariants in the symplectic category (without any semi-positivity assumption on the divisor) appears only in the works of B. Parker on exploded manifolds. Parker constructs such virtual fundamental cycles using techniques from toric geometry and sheaf theory. It seems reasonable to expect that one can approach the construction of the virtual fundamental chains required for the proof of Theorem \ref{mainthm} from the same approach.

\section{RGW Compactification of the Moduli Space of Disks and Strips in $X \setminus \mathcal D$}
\label{Sec:RGW-Compactification}

As we explained in the last section, the stable map compactification is not suitable for the proof of Theorem \ref{mainthm}. In this section, we start the task of defining the alternative RGW compactification. In particular, we define the sets
\begin{equation}\label{4-moduli-RGW}
  \begin{array}{ccc}
	\mathcal M^{\rm RGW}(L_1,L_0;p,q;\beta),&&\mathcal M^{\rm RGW}_{0,1}(L_1,L_0;p,q;\beta),\\
	\mathcal M^{\rm RGW}_{1,0}(L_1,L_0;p,q;\beta),&&\mathcal M^{\rm RGW}_1(L;\alpha).
  \end{array}
\end{equation}
that contain the spaces in \eqref{4-moduli-pre-cpct}. In the next section, we define a topology on this sets that agree with the standard topology on the subspaces given in \eqref{4-moduli-pre-cpct}. Moreover, we shall show that the spaces in \eqref{4-moduli-RGW} are compact and metrizable. 

\begin{remark}
	In  \cite{part2:kura,part3:FH}, we show that the RGW compactifications admit Kuranishi structures. Moreover, the normalized boundary of $\mathcal M^{\rm RGW}(L_1,L_0;p,q;\beta)$
	can be split into three types similar to (but not exactly in the same way as in) the case of the stable compactification of moduli spaces of holomorphic strips into a compact symplectic manifold, 
	which was reviewed in the previous section.
\end{remark}

\begin{remark}
	We do not need the monotonicity assumption to prove the above claims (including the existence of Kuranishin structures) 
	about the spaces in \eqref{4-moduli-RGW}. 
	We use the monotonicity to derive Theorem \ref{mainthm} from the above claims on the structure of the spaces in \eqref{4-moduli-RGW}.
\end{remark}

\subsection{Partial {$\bbC_*$}-actions and divisors}
\label{subsec:Cstaraction}
Given a smooth divisor $\mathcal D$ in $X$, there is a partially defined $\bbC_*$ action in a regular neighborhood of $\mathcal D$. This action plays a central role in our construction of the RGW compactification. So we take a moment to formalize the notion of a partial $\bbC_*$ action on a manifold with an almost complex structure.
\begin{definition}\label{partial-C-action}
	Let $(Y,J)$ be an almost complex manifold and $D$  a codimension 2 submanifold of $Y$. 
	A partial $\bbC_*$ action on $(Y,D)$ is a pair $(\mathscr U,\frak m)$ where:
\begin{enumerate}
	\item $\mathscr U$ is an open neighborhood of $\bbC \times D$ in $\bbC\times  Y$.
	\item $\frak m : \mathscr U \to Y$ is a smooth map. For $(c,p) \in \mathscr U$ we write $c\cdot p$ for $\frak m(c,p)$.
		We say $c\cdot p$ is defined if  $(c,p) \in \mathscr U$.
	\item If $c_2 \cdot p$ and $c_1c_2\cdot p$ are defined, then $c_1\cdot (c_2 \cdot p)$ is also defined and is equal to
		$c_1c_2\cdot p$.
	\item If $c \cdot p$ is defined and $c \ne 0$, then $c^{-1}\cdot (c \cdot p)$ is defined and is $p$.
	\item If $p \in D$, then for any $c$, $c\cdot p$ is defined and is equal to $p$ .
	\item If $0\cdot p$ is defined, then $0 \cdot p \in D$.
\item
For $c \ne 0$, the map $p \mapsto c\cdot p$ preserves almost complex structure 
on the domain where it is defined.
\end{enumerate}
\end{definition}
\begin{definition}\label{nohrmalbunnC}
	Let $D$ be an almost complex manifold and $\pi: L\to D$ be a complex line bundle over $D$. Suppose also $\theta\in \Omega^1(L)$ is a connection $1$-form on $L$.
	Then for any $p\in L$, the {\it horizontal} subspace $H_p:=\ker(\theta_p)$ of $T_pL$ can be equipped with a complex structure by requiring that the derivative of $\pi$
	is complex linear as an isomorphism from $H_p$ to $T_{\pi(p)}D$. Thus we obtain a complex structure $J$ on $L$ by requiring that the tangent space to the fiber at $p$ with its standard complex structure and $H_p$ 
	are complex subspaces of $T_pL$. With respect to this complex structure, scaling in the fiber direction provides a partial $\bbC_*$ action on $(L,D)$ in the sense of Definition \ref{partial-C-action}.
\end{definition}
\begin{example}\label{nohrmalbunnC-gen}
	We can generalize Definition \ref{nohrmalbunnC} in the following way.
	Let $Y$ be an almost complex submanifold and $D$ be a codimension 2 submanifold of $Y$ such that for a neighborhood $U$ of $D$ there is a map $\Phi:U\to L$ that is a diffeomorphism into its image
	preserving the almost complex structures and the restriction of $\Phi$ to $D\subset U$ is the identity map into the zero section of $L$. Then we may pull back the partial $\bbC_*$ action on $(L,D)$ to obtain a partial 
	$\bbC_*$ action on $(Y,D)$.
\end{example}

Now suppose $(L,J)$ be as in Definition \ref{nohrmalbunnC} and $u:(\Sigma,j) \to (L,J)$ be a pseudo-holomorphic map from a Riemann surface to $L$. Thus $v:=\pi \circ u$ is a pseudo-holomorphic map into $D$, and $\theta_u:=v^*\theta$ defines a connection on the pulled back bundle $L_u:=v^*L$. Applying Definition \ref{nohrmalbunnC} to $L_u$ and $\theta_u$ determines a complex structure $J_u$ on the total space of $L_u$. The map $u$ induces a section $s_u:\Sigma\to L_u$ that is pseudo-holomorphic with respect to $j$ and $J_u$. We may also use the $(0,1)$ component $\overline \partial_\theta$ of $\theta_v$ to define a holomorphic structure on $L_u$: a local holomorphic section of $L_u$ is given by a solution of $\overline \partial_\theta v=0$. Since $\Sigma$ is a Riemann surface, there is no integrability obstruction to solve this equation to find local holomorphic charts for $L_u$. The connection $\theta_u$ with respect to a holomorphic chart for $L_u$ over some open subspace $V$ of $\Sigma$ has the form $d+\alpha$ where $\alpha\in \Omega^{(1,0)}(V)$. In particular, the underlying complex structure on the holomorphic bundle $L_u$ agrees with $J_u $, and $s_u$ is a holomorphic section of $L_u$. We summarize this discussion in the following lemma.

\begin{lemma}\label{pull-back-dble}
	Suppose $(L,J)$ is given as in Definition \ref{nohrmalbunnC}. For any pseudo-holomorphic map $u:(\Sigma,j) \to (L,J)$, the induced section $s_u$ of the line bundle $L_u$ over $\Sigma$ is holomorphic.
\end{lemma}


\subsection{Symplectic and Complex Structures on a Projective Space Bundle}
\label{subsec:sympproj}
We turn back to our setup where $\mathcal D$ is a divisor in $(X,\omega)$. In this subsection, we determine almost complex structures that are used in our main construction. Let $\omega_{\mathcal D}$ denote the induced symplectic structure on $\mathcal D$, and let $\mathcal N_{\mathcal D}(X) $ be the normal bundle of $\mathcal D$ in $X$. Fix an $\omega_{\mathcal D}$-compatible almost complex structure $J_{\mathcal D}$ on $\mathcal D$. Fix a compatible almost structure on the symplectic vector bundle $\mathcal N_{\mathcal D}(X) $ to turn it into a Hermitian line bundle. Let also $\theta$ be a ${\rm U}(1)$-connection on $\mathcal N_{\mathcal D}(X)$. Then Definition \ref{nohrmalbunnC} allows us to fix a complex structure on $\mathcal N_{\mathcal D}(X)$.

One of the ingredients of our RGW compactification are pseudo-holomorphic maps into the {\it projective bundle} ${\bf P}(\mathcal N_{\mathcal D}(X) \oplus \bbC)$. This space consists of equivalence classes of $(a,b) \in (\mathcal N_{\mathcal D}(X)  \times \bbC) \setminus \mathcal D \times \{0\}$ such that $(a,b) \sim (a',b')$ if there exists $\lambda \in \bbC_*$ with $(a,b) = (\lambda a',\lambda b')$. This space is a ${\bf P}^1$-bundle, which determines a compactification of the normal bundle $\mathcal N_{\mathcal D}(X) \to \mathcal D$. The complex structure on $\mathcal N_{\mathcal D}(X)$ determines an almost complex structure $J_{\bf P}$ on ${\bf P}(\mathcal N_{\mathcal D}(X) \oplus \bbC)$.  There is an action of $\mathbb C_*$ on ${\bf P}(\mathcal N_{\mathcal D}(X) \oplus \bbC)$ where $\lambda\in \bbC$ maps the equivalence class $[a,b]$ to $[\lambda a,b]$. The complex structure $J_{\bf P}$ is invariant with respect to the action of $\bbC_*$.

We also fix a symplectic structure on ${\bf P}(\mathcal N_{\mathcal D}(X) \oplus \bbC)$, invariant with respect to the action of $S^1 \subset \bbC_*$. Let $\varphi:[0,\infty)\to [0,2)$ be a smooth function satisfying the following properties:
\begin{enumerate}
	\item[(i)] $\varphi(r)=\frac{r^2}{2}$ for $r\in [0,1]$;
	\item[(ii)] $\varphi(r)=2-\frac{1}{r}$ for $r\in [2,\infty)$;
	\item[(iii)] $\varphi'(r)>0$.
\end{enumerate}
The Hermitian structure on $\mathcal N_{\mathcal D}(X)$ determines a length function $r:\mathcal N_{\mathcal D}(X) \to [0,\infty)$. The exact 2-form $d(\varphi(r)\theta)$ on $\mathcal N_{\mathcal D}(X)\setminus \mathcal D$ extends to a smooth closed 2-form on the projective bundle ${\bf P}(\mathcal N_{\mathcal D}(X) \oplus \bbC)$ which is $S^1$-invariant and whose restriction to each ${\bf P}^1$-fiber is a volume form. Therefore, if $K$ is a large enough constant, then the following form defines an $S^1$-invariant symplectic form on ${\bf P}(\mathcal N_{\mathcal D}(X) \oplus \bbC)$:
\[
  \omega_{\bf P}:=\pi^*\omega_{\mathcal D}+\frac{1}{K}d(\varphi(r)\theta).
\]
By choosing $K$ large enough, we may also assume that $J_{\bf P}$ is tame with respect to $\omega$.

Darboux's Theorem for symplectic submanifolds implies that there is a symplectomorphism $\Phi$ from a neighborhood of the zero section in ${\bf P}(\mathcal N_{\mathcal D}(X) \oplus \bbC)$ to a neighborhood of $\mathcal D$ in $X$ that restricts to the identity map on the zero section \cite[Theorem 3.30]{MS}. We use one such symplectomorphism to push forward $J_{\bf P}$ to a tame almost complex structure on a neighborhood of $\mathcal D$ in $X$. Then we extend this almost complex structure into a tame almost complex structure $J$ on $X$. With this choice of almost complex structure on $X$, there is a partial $\bbC_*$ action on $(X,\mathcal D)$. The moduli spaces of pseudo-holomorphic curves in $X$ and ${\bf P}(\mathcal N_{\mathcal D}(X) \oplus \bbC)$ in the rest of the paper are defined with respect to $J$ and $J_{\bf P}$. The following lemma about such pseudo-holomorphic curves is crucial for our construction.

\begin{lemma}\label{pos-mult}
	Suppose $u:(\Sigma,j)\to (X,J)$ is a pseudo-holomorphic curve. Then the multiplicity of any intersection point of $u$ 
	and $\mathcal D$ is a positive integer. A similar claim holds if $(X,J)$ is replaced with 
	$({\bf P}(\mathcal N_{\mathcal D}(X) \oplus \bbC,\mathcal D_0\cup \mathcal D_\infty)$ where $\mathcal D_0$ and 
	$\mathcal D_\infty$ are given by the sections at zero and infinity.
\end{lemma}

\begin{proof}
	It suffices to consider the pseudo-holomorphic maps $u$ that are mapped to a tubular neighborhood of $\mathcal D$ where 
	the almost complex structure has the form given in Definition \ref{nohrmalbunnC}. In this case, the claim follows from 
	Lemma \ref{pull-back-dble} and the corresponding positivity result in the holomorphic category.
\end{proof}

\begin{remark}\label{weak-contr}
	The almost complex structures $J$ and $J_{\bf P}$ depend on $J_{\mathcal D}$, the compatible complex structure on the symplectic vector 
	bundle $\mathcal N_{\mathcal D}(X) $, the connection $\theta$, the 
	symplectomorphism $\Phi$ and the extension of $\Phi_*(J_{\mathcal D})$ into a tame almost complex structure on $X$. The 
	space of all such choices has trivial homotopy groups. 
	This fact will be used in \cite{part3:FH} to show that our version of Lagrangian Floer homology does not depend on the choices of almost complex structures.
\end{remark}

\begin{remark}
	We use the partial $\bbC_{*}$ action on $(X,\mathcal D)$ in the definition of the RGW topology and the gluing analysis. To carry this out, it is important for us that our almost complex structure is invariant with respect to this 
	partial $\bbC_*$ action.
	We believe that a similar construction can be carried out for a slightly more general family of almost complex structures at the expense of more work on the analysis part. 
	In fact, it is reasonable to expect that similar assumptions on almost complex structures as in the literature on relative Gromov-Witten theory, where one does not require 
	invariance with respect to a partial $\bbC_*$ action, would be sufficient for our purposes. For instance, \cite{IP} works with almost complex structures that satisfy an infinitesimal integrability in the normal direction to
	 $\mathcal D$, and $\mathcal D$ is an almost complex submanifold \cite[Definition 3.2]{IP}. The almost complex structure $J$ constructed above satisfies these conditions. 
	 On the other hand, integrable almost complex structures are often not invariant with respect to any partial $\bbC_*$ action, but still fits into the framework of \cite[Definition 3.2]{IP}.	
	 Since we do not gain anything from working in such generality and the analytical aspects of partially $\bbC_*$-invariant almost complex structures are simpler, we content ourselves with this more restricted family of
	 almost complex structures.
\end{remark}

\subsection{RGW Compactification in a Neighborhood of the Divisor} \label{subsec:nbdofdivisor}
Fix non-zero integers $m_0,m_1,\dots,m_{\ell} \in \bbZ \setminus \{0\}$ and let ${\bf m} =(m_0,m_1,\dots,m_{\ell})$. Let also $\Pi_2(\mathcal D)  = {\rm Im}(\pi_2(\mathcal D) \to H_2(\mathcal D))$. For $\alpha \in \Pi_2(\mathcal D;\bbZ)$, the moduli space  $\mathcal M^{0}(\mathcal D\subset X;\alpha;{\bf m})$ is defined as follows:

\begin{definition}\label{defn3434}
	An element of $\mathcal M^{0}(\mathcal D\subset X;\alpha;{\bf m})$ is an isomorphism class of a triple 
	$((\Sigma,\vec w);u;s)$ with the following properties:
        \begin{enumerate}
        \item
      	  $(\Sigma,\vec w)$ is a nodal curve of genus zero 
	        with $\ell+1$ marked points $\vec w = (w_0,\dots,w_{\ell})$.
        		The marked points $w_i$ are not nodal points.
        \item
	        $u : \Sigma \to \mathcal D$ is a holomorphic map representing the homology class $\alpha$.
        \item
        		$s$ is a section of $u^*\mathcal N_{\mathcal D}(X)$ on $\Sigma \setminus \vec w$
	        and is extended to a meromorphic section on $\Sigma$.
	\item
		For $i=0,1,\dots,\ell$, the section $s$ has a zero of multiplicity $m_i$ at $w_i$  if $m_i > 0$, and
		it has a pole of multiplicity $-m_i$ at $w_i$ if $m_i < 0$.
		$s$ is nonzero on $\Sigma \setminus \{w_0,\dots,w_{\ell}\}$.
	\item
		The stability condition defined in Definition \ref{stable} holds.
	\end{enumerate}
	The definition of isomorphism between two such elements are given in Definition \ref{defn35}. 
\end{definition}

\begin{definition}\label{defn35}
	Let ${\bf x} = ((\Sigma,\vec w);u;s)$ and ${\bf x}' = ((\Sigma',\vec w^{\,\prime});u';s')$
	be as in Definition \ref{defn3434}.
	An {\it isomorphism} from ${\bf x}$ to ${\bf x}'$ is 
	a pair $(v,c)$ such that:
	\begin{enumerate}
	\item $v : \Sigma \to \Sigma'$ is a biholomorphic map 
		such that $u' \circ v = u$,
	\item $c$ is a nonzero complex number such that $s' \circ v = cs$.
	\end{enumerate}
	We say ${\bf x}$ is {\it isomorphic} to ${\bf x}'$ if there exists an 
	isomorphism between them. 
	We say ${\bf x}$ is {\it strongly isomorphic} to ${\bf x}'$ if we can additionally assume $c=1$.
	We denote by $\widetilde{\mathcal M}^{0}(\mathcal D\subset X;\alpha;{\bf m})$
	the set of all strong isomorphism classes.
\end{definition}
\begin{definition}\label{stable}
We say an element ${\bf x} = [(\Sigma,\vec w);u;s]$
as in Definition \ref{defn3434} is {\it stable}
if the set of isomorphisms from ${\bf x}$ to itself 
is a finite set.
\end{definition}

Note that there exists a $\bbC_{*}$ action on $\widetilde{\mathcal M}^{0}(\mathcal D\subset X;\alpha;{\bf m})$ such that
\begin{equation}\label{formula3636}
	\widetilde{\mathcal M}^{0}(\mathcal D\subset X;\alpha;{\bf m})/\bbC_{*} \\
	=
	{\mathcal M}^{0}(\mathcal D\subset X;\alpha;{\bf m}).
\end{equation}

\begin{remark}
	The holomorphic structure on $u^*\mathcal N_{\mathcal D}(X)$ used in Definition \ref{defn3434} is the one discussed in Subsection \ref{subsec:Cstaraction}.
\end{remark}

\begin{remark}
	We define:
	\[
	  d(\alpha):=[\mathcal D] \cdot \alpha.
	\]
	In the right hand side, we regard $[\mathcal D]$ and $\alpha$ as homology classes in $X$.
	We call $d$ the {\it degree} of $(\Sigma,u)$. If $\mathcal M^{0}(\mathcal D\subset X;\alpha;{\bf m})$ 
	is non-empty, then the definition implies that
	\begin{equation}\label{form3535}
		d(\alpha) = \sum_{i=0}^{\ell} m_i.
	\end{equation}
\end{remark}

\begin{definition}\label{evaluationmap1}
We define evaluation maps
$$
{\rm ev} = ({\rm ev}_0,\dots,{\rm ev}_{\ell}) 
: \mathcal M^{0}(\mathcal D\subset X;\alpha;{\bf m}) \to \mathcal D^{\ell+1}
$$
by
\begin{equation}\label{evdefin}
{\rm ev}_i((\Sigma,\vec w);u;s) = u(w_i).
\end{equation}
\end{definition}

Let $((\Sigma,\vec w);u;s)$ represent an element of $\mathcal M^{0}(\mathcal D\subset X;\alpha;{\bf m})$. Define the map $U$ from $\Sigma$ to the ${\bf P}^1$-bundle ${\bf P}(\mathcal N_{\mathcal D}(X) \oplus \bbC)$ over $\mathcal D$ as follows (see Figure \ref{Figure9}):
\begin{equation}\label{oomojiU}
	U(z) = [s(z):1] \in {\bf P}(\mathcal N_{\mathcal D}(X) \oplus \bbC).
\end{equation}
The discussion of Subsection \ref{subsec:Cstaraction} shows that $U$ is $J_{\bf P}$-holomorphic. The homology class of this map in $H_2({\bf P}(\mathcal N_{\mathcal D}(X) \oplus \bbC);\bbZ)$, denoted by $\hat\alpha$,
is uniquely determined by the following two properties:
\begin{enumerate}
\item[(i)]
	Projection of $\hat\alpha$ to $H_2(\mathcal D)$ is $\alpha$. Here
	$\mathcal D \subset {\bf P}(\mathcal N_{\mathcal D}(X) \oplus \bbC)$ 
	is identified with the zero section.
\item[(ii)]
	The algebraic intersection of the infinity section $\mathcal D_\infty$ and $\hat{\alpha}$ is given by:
	\[
	  \hat{\alpha} \cap [\mathcal D_{\infty}] =  - \sum_{m_i<0} m_i.
	\]
\end{enumerate}
Therefore, $((\Sigma,\vec w);U)$ defines an element of $\mathcal M_{\ell+1}({\bf P}(\mathcal N_{\mathcal D}(X) \oplus \bbC);\hat\alpha)$, the moduli space of stable maps of genus zero in ${\bf P}(\mathcal N_{\mathcal D}(X) \oplus \bbC)$ of homology class $\hat\alpha$ and with $\ell+1$ marked points. In particular, the stability in Definition \ref{stable} implies the stability of $U$ (as a holomorphic map from a nodal Riemann surface with marked points). The $\bbC_{*}$ action $c [a:b] = [ca:b]$ on ${\bf P}(\mathcal N_{\mathcal D}(X) \oplus \bbC)$ induces a $\bbC_{*}$ action on $\mathcal M_{\ell+1}({\bf P}(\mathcal N_{\mathcal D}(X) \oplus \bbC);\hat\alpha)$. The element $[((\Sigma,\vec w);U)]$ in the quotient space $\mathcal M_{\ell+1}({\bf P}(\mathcal N_{\mathcal D}(X) \oplus \bbC);\hat\alpha)/\bbC_{*}$ is independent of the choices of the representative $((\Sigma,\vec w);u;s)$ and so we may define a map 
\begin{equation}\label{map35}
\mathcal M^{0}(\mathcal D\subset X;\alpha;{\bf m})
\to
\mathcal M_{\ell+1}({\bf P}(\mathcal N_{\mathcal D}(X) \oplus \bbC);\hat\alpha)/\bbC_{*},
\end{equation} 
which is injective.

\begin{figure}[h]
\centering
\includegraphics[scale=0.4]{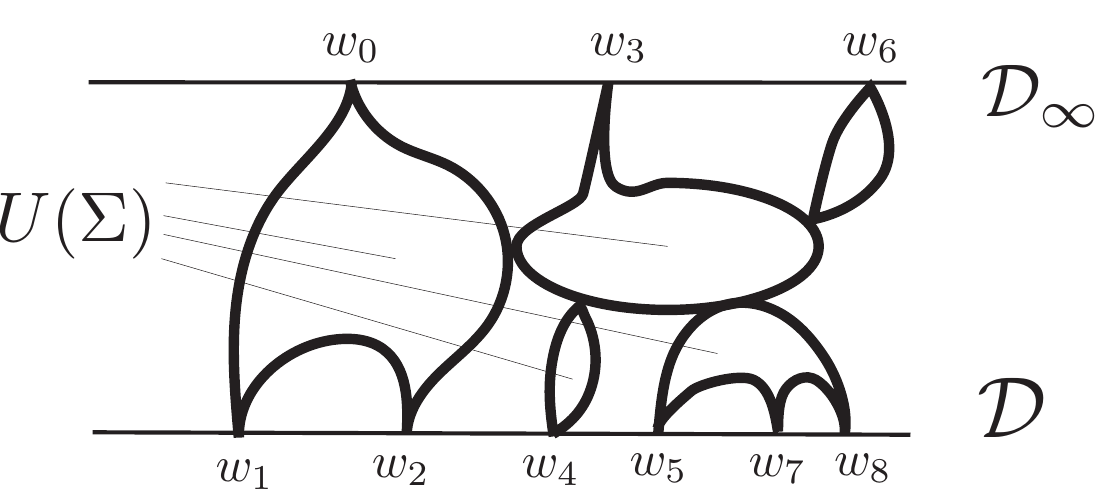}
\caption{An element of $\mathcal M^{0}(\mathcal D\subset X;\alpha;{\bf m})$}
\label{Figure9}
\end{figure}

Let $\mathcal M_{\ell+1}(\mathcal D;\alpha)$ be the moduli space of stable $J_{\mathcal D}$-holomorphic maps of genus $0$ in the manifold $\mathcal D$ with $\ell+1$ marked points and of homology class $\alpha$.
We define a map
\begin{equation}\label{form3737}
\mathcal M^{0}(\mathcal D\subset X;\alpha;{\bf m}) \to \mathcal M_{\ell+1}(\mathcal D;\alpha)
\end{equation}
by sending $[(\Sigma,\vec w);u;s]$ to $[(\Sigma,\vec w);u]$, namely, we  forget $s$ in 
$((\Sigma,\vec z);u;s)$.
(Note that stability is preserved by this process.)
\par
We denote by 
$\mathcal M^{00}(\mathcal D\subset X;\alpha;{\bf m})$
(resp. $\mathcal M^0_{\ell+1}(\mathcal D;\alpha)$) 
the subset of $\mathcal M^{0}(\mathcal D\subset X;\alpha;{\bf m})$
(resp. $\mathcal M_{\ell+1}(\mathcal D;\alpha)$)
consisting of elements such that $\Sigma$ is a sphere, namely,  
it consists of elements without nodal points.

\begin{lemma}\label{lem3.12}
	The map \eqref{form3737} is injective.
	Moreover, \eqref{form3737} induces a bijection
	\begin{equation}\label{312312}
		\mathcal M^{00}(\mathcal D\subset X;\alpha;{\bf m})\to \mathcal M^0_{\ell+1}(\mathcal D;\alpha)
	\end{equation}
	if \eqref{form3535} holds.
\end{lemma}
\begin{proof}
	Suppose  $[(\Sigma,\vec w);u;s]$, $[(\Sigma,\vec w);u;s']$ are 
	two elements of $\mathcal M^{0}(\mathcal D\subset X;\alpha;{\bf m})$ 
	mapped to the same element of $\mathcal M^0_{m+1}(\mathcal D;\alpha)$.
	Then the ratio $s'/s$ defines a holomorphic function on $\Sigma$ 
	which is nonzero everywhere. Therefore, $s'/s$ is constant.
	Thus $[(\Sigma,\vec w);u;s]=[(\Sigma,\vec w);u;s']$ in 
	$\mathcal M^{0}(\mathcal D\subset X;\alpha;{\bf m})$.
	It is also easy to see that \eqref{312312} is surjective when \eqref{form3535} holds.
\end{proof}
In the same way, we can prove:
\begin{lemma}\label{lem313}
	\eqref{form3737} is a bijection onto an open subset of $\mathcal M_{\ell+1}(\mathcal D;\alpha)$ (with respect to the stable map topology).
\end{lemma}
\begin{proof}
	Let $((\Sigma,\vec z),u)$ be an element of 
	$\mathcal M_{\ell+1}(\mathcal D;\alpha)$.
	We decompose $\Sigma$ into irreducible components as
	\[
	  \Sigma = \bigcup_a \Sigma_a.
	\]
	We can easily show that $[(\Sigma,\vec w),u]$ is in the image of \eqref{form3737}
	if and only if the next equalities hold for each $a$.
	\begin{equation}\label{condform311}
		\mathcal D \cdot u_*[\Sigma_a] = \sum_{z_i \in \Sigma_a} m_i.
	\end{equation}
	Condition \eqref{condform311}
	is an open condition with respect to the stable map topology.
	Therefore, the image of \eqref{form3737} is open.
\end{proof}

%

If we topologize $\mathcal M^{0}(\mathcal D\subset X;\alpha;{\bf m})$ using the bijection in Lemma \ref{lem313}, the resulting space is  not compact. This issue stems from non-compactness of $\bbC_{*}$.
Our next task is to compactify $\mathcal M^{0}(\mathcal D\subset X;\alpha;{\bf m})$.
The compactification has a stratification 
and each stratum is described by an appropriate fiber product of the 
spaces of the form
$\mathcal M^{0}(\mathcal D\subset X;\alpha';{\bf m}')$
for various choices of $\alpha'$, ${\bf m}'$.
The strata of our compactification are labeled with {\it decorated rooted trees}.
They also encode the data of how to take fiber products:
\begin{definition}\label{defn315}
A decorated rooted tree is a quadruple $\mathcal T =(T,\alpha,m,\lambda)$ 
with the following properties:
\begin{enumerate}
\item
$T$ is a tree with the set of vertices $C_0(T)$ and the set of edges $C_1(T)$. We are given a decomposition of $C_0(T)$ into the disjoint union of two subsets $C^{\rm out}_0(T)$ and $C^{\rm ins}_0(T)$.
We call an element of $C^{\rm out}_0(T)$ (resp. $C^{\rm ins}_0(T)$)
{\it an outside vertex} (resp. {\it  inside vertex}).
\item
All the outside vertices have valency one.
We call an edge incident to an outside vertex an {\it outside edge}. Any of the remaining edges is called an {\it inside edge}.
\item
There is a distinguished outside vertex $v^0$ of $T$.
Let $e^0$ be the unique edge which contains $v^0$.
We call $v^0$ and $e^0$ the {\it root vertex} and the {\it root edge}, respectively.
We call them root if it is clear from the context whether we mean the root vertex or the root edge.
We also fix a labeling $\{v^0,v^1,\dots,v^{\ell}\}$ of the outside vertices.
\item
$\alpha : C^{\rm ins}_0(T) \to \Pi_2(\mathcal D;\bbZ)$ is a map from the set of 
the inside vertices to $\Pi_2(\mathcal D;\bbZ)$.
We call $\alpha(v)$ the {\it homology class of $v$}.
\item
$m : C_1(T) \to \bbZ \setminus \{0\}$ is a $\bbZ \setminus \{0\}$-valued 
function 
which assigns a nonzero integer to each edge.
We call $m(e)$ the {\it multiplicity of $e$}.
\item
$\lambda : C^{\rm ins}_0(T) \to \bbZ_{+}$ is a $\bbZ_{+}$ valued function.
For a vertex $v$, we call $\lambda(v)$ the {\it level of $v$.} 
There exists $\vert \lambda\vert \in \bbZ_{+}$ such that 
the image of $\lambda$ is $\{1,2,\dots,\vert \lambda\vert\}$, namely, $\lambda$ is a surjective map to 
$\{1,2,\dots,\vert \lambda\vert\}$.
We call $\vert \lambda\vert$ the {\it number of levels}.
\item
	For each vertex $v \in  C^{\rm ins}_0(T)$, there exists a unique edge of $v$ which is contained in the 
	same connected component as the root in $T \setminus v$.
We call it the {\it first edge of $v$} and denote it by $e(v)$.
We then require the following {\it balancing condition}:
\begin{equation}\label{bananching}
m(e(v)) + \alpha(v) \cdot [\mathcal D] = \sum_{e \in  C_1(T): v \in e, e\ne e(v)} m(e).
\end{equation}
(This condition is the analogue of (\ref{form3535}).)
\item (Stability condition)
Each vertex $v \in  C^{\rm ins}_0(T)$ satisfies at least 
one of the following conditions:
\begin{enumerate}
	\item $v$ contains at least 3 edges.
	\item $\alpha(v) \cap [\omega_{\mathcal D}] > 0$. 
\end{enumerate}
\item
Let $e$ be an inside edge incident to the vertices $v$, $v'$.
We assume $e$ is $e(v')$, the first edge  of $v'$.
\begin{enumerate}
\item
	If $m(e) > 0$, then $\lambda(v) < \lambda(v')$.
\item
	If $m(e) < 0$, then $\lambda(v) > \lambda(v')$.
\end{enumerate}
In particular, $\lambda(v) \ne \lambda(v')$.
\end{enumerate}
For a decorated rooted tree ${\mathcal T}$, we define the {\it homology class of ${\mathcal T}$}
by
\begin{equation}
\alpha(\mathcal T) = \sum_{v \in  C^{\rm ins}_0(T)} \alpha(v)
\end{equation}
We say $m(e^0)$ is the {\it input multiplicity of $\mathcal T$} and the set $\{m(e^i) \mid e^i \in C^{\rm out}_1(T)\}$ gives the {\it output multiplicities of $\mathcal T$}. 
\par
Let $e$ belong to the set of inside edges $C^{\rm ins}_1(T)$.
The two vertices incident to $e$ are the {\it target vertex} $t(e)$ and {\it source vertex} $s(e)$ of $e$,
if $e$ is the first edge of $t(e)$.
\end{definition}
\begin{remark}\label{rem321322}
        We orient the edges so that it starts from the vertex $s(e)$ and ends 
        at the vertex $t(e)$. 
        (See Figure \ref{Figure10}.)
        Then for given vertex $v$ all the  
        edges other than the first edge $e(v)$ goes from $v$ to other vertices.
        This is consistent with the convention in \eqref{bananching} only $e(v)$ is on the left hand side.
\end{remark}
\begin{example}
An example of a decorated rooted tree $\mathcal T$ is given in Figure \ref{Figure10}.
In the figure, the outside vertices are drawn by black circles and 
inside vertices are drawn by white circles.
The input multiplicity of $\mathcal T$  is $3$ 
and its output multiplicity is $-1$. The number of levels is 4.
We also have 
$$
\aligned
&e(v_1) = e_0, \quad e(v_2) = e_1, \quad 
e(v_3) = e_2, \quad e(v_4) = e_3, 
\quad  e(v_5) = e_4, \\
& \alpha(v_2)\cdot\mathcal D = -3,
\quad \alpha(v_3)\cdot\mathcal D = -2, \quad \alpha(v_4)\cdot\mathcal D = -1, \quad \alpha(v_5)\cdot\mathcal D = 2,
\endaligned
$$
and
\[
  \alpha(v_1) =0.
\]
\end{example}

\begin{figure}[h]
\centering
\includegraphics[scale=0.4]{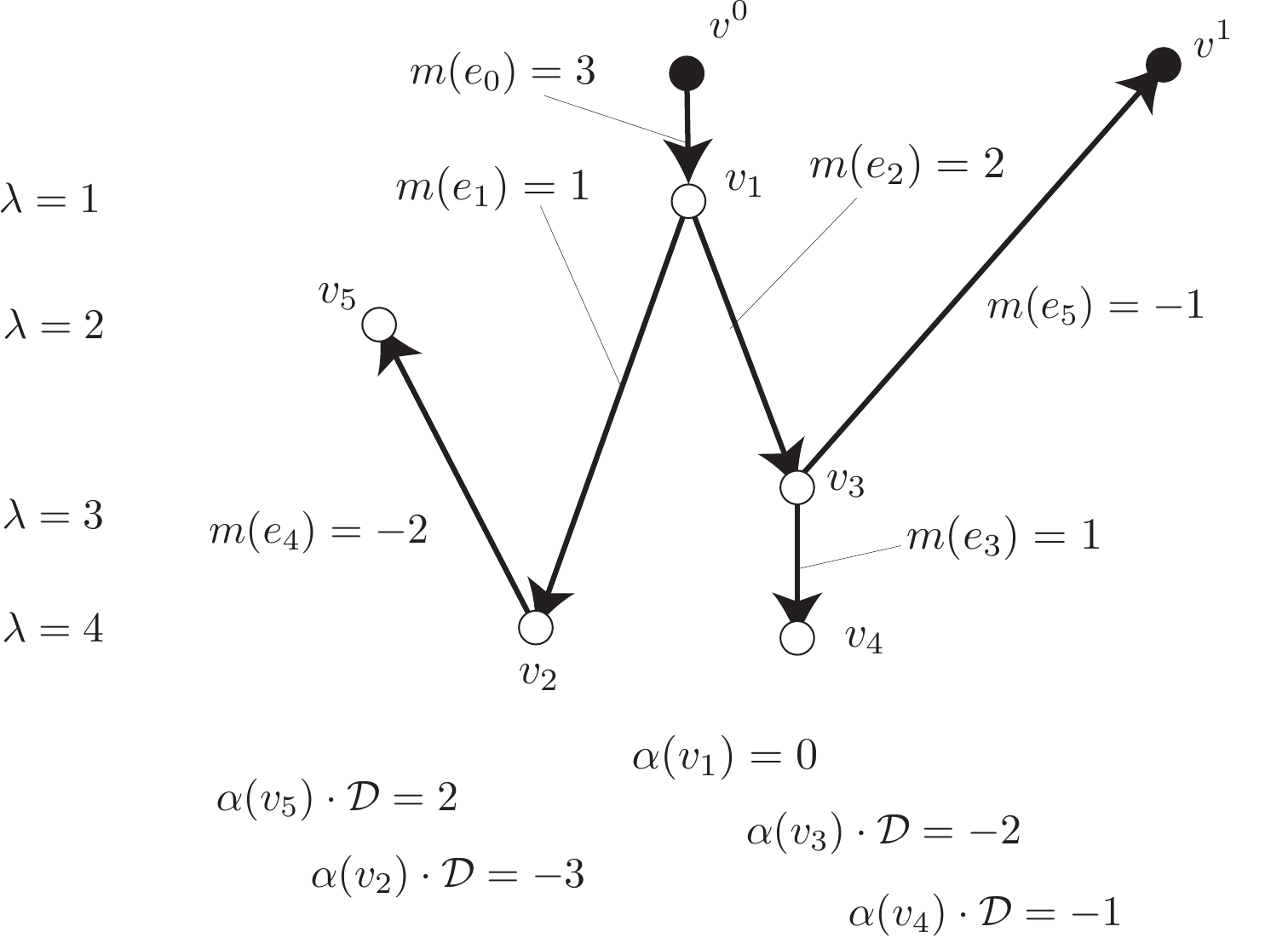}
\caption{Decorated rooted tree}
\label{Figure10}
\end{figure}
\begin{remark}
The notion of level here is similar to the one 
appearing in the compactification of the moduli space 
of pseudo-holomorphic curves in symplectic field theory
\cite{BEHWZ} and its generalization to stable Hamiltonian structures \cite{CV}.
\end{remark}
\begin{definition}\label{defn313}
To each decorated rooted tree $\mathcal T=(T,\alpha,m,\lambda)$, 
we associate a moduli space $\widetilde{\mathcal M}^0(\mathcal D;\mathcal T)$
as follows.
Fix an inside vertex $v \in  C^{\rm ins}_0(T)$.
Define $m^v_0$ to be $-m(e(v))$ where $e(v)$ is  the first edge of $v$ defined 
in Definition \ref{defn315} (7). Let $e^v_1,\dots,e^v_{\ell(v)}$ be the 
remaining edges of 
$v$, $m^v_i = m(e^v_i)$ and 
${\bf m}^v = (m^v_0,m^v_1,\dots,m^v_{\ell(v)})$.
Define:
\begin{equation}\label{fpr315315}
\widetilde{\mathcal M}^{0}(\mathcal D;\mathcal T;v)
= \widetilde{\mathcal M}^{0}(\mathcal D\subset X;\alpha(v);{\bf m}^v).
\end{equation}
Note that the right hand side is independent of the 
order of the edges $e^v_1,\dots,e^v_{\ell(v)}$.
\par 
Consider the evaluation map
\begin{equation}\label{formnew3188}
{\rm EV} :
\prod_{v \in  C^{\rm ins}_0(T)}\widetilde{\mathcal M}^0(\mathcal D;\mathcal T;v) 
\to \prod_{e \in C^{\rm ins}_1(T)} (\mathcal D \times \mathcal D)
\end{equation}
defined as follows.
Let $\vec{\bf x}= ({\bf x}_v;v \in  C^{\rm ins}_0(T))$
be an element of the domain of ${\rm EV}$. 
The $e$-th component ${\rm EV}(\vec{\bf x})_e$ of  ${\rm EV}(\vec{\bf x})$
is by definition:
\begin{equation}\label{Ev317def}
{\rm EV}(\vec{\bf x})_e
=
({\rm ev}_0({\bf x}_{t(e)}),{\rm ev}_i({\bf x}_{s(e)})) \in \mathcal D \times \mathcal D.
\end{equation}
Here ${\rm ev}_0$ and ${\rm ev}_i$ are as in 
Definition \ref{evaluationmap1} and $i$ is taken so that $e$ is the $i$-th edge of $s(e)$.
($s(e)$ and $t(e)$ are defined at the end of Definition \ref{defn315}.)
Let $\Delta  \subset \mathcal D \times \mathcal D$ be the diagonal.
We now define
\begin{equation}\label{form31666}
\aligned
\widetilde{\mathcal M}^0(\mathcal D;\mathcal T) 
= 
\prod_{v \in  C^{\rm ins}_0(T)}\widetilde{\mathcal M}^{0}(\mathcal D;\mathcal T;v)\,\,
{}_{\rm EV}\times_{\star} 
\prod_{e \in C^{\rm ins}_1(T)} \Delta.
\endaligned
\end{equation}
Here we take the fiber product over the space
$
\prod_{e \in C^{\rm ins}_1(T)} (\mathcal D \times \mathcal D)
$
and $\star$ denotes the inclusion of $\prod \Delta$ into $\prod(\mathcal D \times \mathcal D)$.
The other map in the definition of the fiber product is given in \eqref{formnew3188}. We topologize $\widetilde{\mathcal M}^0(\mathcal D;\mathcal T;v)$ with the fiber product topology.
\end{definition}
\begin{example}
Figure \ref{Figure11} sketches an element of $\widetilde{\mathcal M}^{0}(D;\mathcal T;v)$ 
for the decorated ribbon tree $\mathcal T$ given in Figure \ref{Figure11}.
Each of the vertical lines (4 of them) in the figure 
corresponds to a ``trivial cylinder'' that is 
a map to a single fiber of ${\bf P}(\mathcal N_{\mathcal D}(X) \oplus \bbC)$.
(More precisely, it is an $\vert m(e)\vert$ fold covering to a fiber.)
The number assigned to a vertical edge or a double point is the 
multiplicity of the corresponding edge of our tree.
The map $u : \Sigma \to \mathcal D$ 
in Figure \ref{Figure11} corresponding to ${\bf x}_{v_1}$
has homology class $0$ and is a constant map to a point $p$ of $\mathcal D$.
So the image of the map $U$ corresponding to ${\bf x}_{v_1}$ is contained 
in a fiber of the normal bundle $\mathcal N_{\mathcal D}(X)$ at $p$.
\begin{figure}[h]
\centering
\includegraphics[scale=0.4]{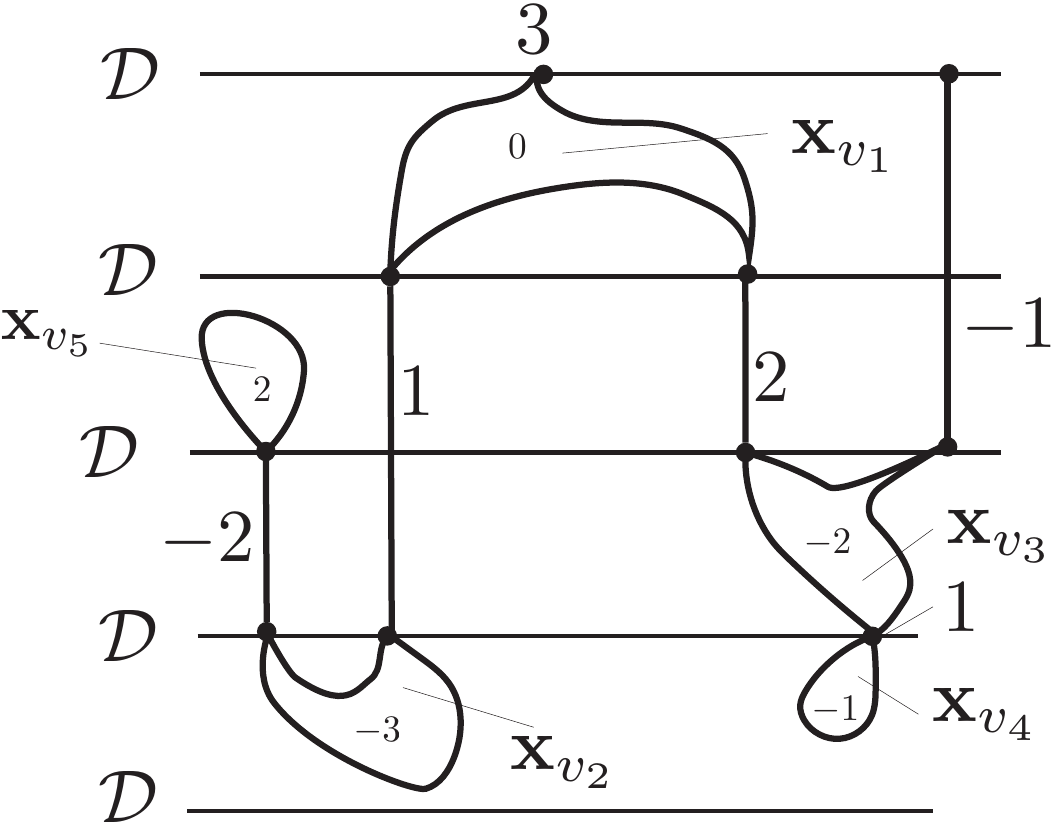}
\caption{Configuration associated to the decorated rooted tree
in Figure \ref{Figure10}}
\label{Figure11}
\end{figure}
\end{example}
\begin{definition}\label{Cstaractiondefn313}
	Let $\vert \lambda\vert$ be the number of the levels of $\mathcal T$. We define a $\bbC_{*}^{\vert \lambda\vert}$ action on 
	$\widetilde{\mathcal M}^0(\mathcal D;\mathcal T)$ as follows.
	For $\vec{\rho} = (\rho_1,\dots,\rho_{\vert \lambda\vert}) \in \bbC_{*}^{\vert \lambda\vert}$ and $\vec{{\bf x}} = ({\bf x}_v) 
	\in \prod_{v \in  C^{\rm ins}_0(T)}\widetilde{\mathcal M}(\mathcal D;\mathcal T;v)$.
        We have: 
        \begin{equation}\label{quotientbyci}
        \vec{\rho} \cdot \vec{{\bf x}} = (\rho_{\lambda(v)}{\bf x}_v).
        \end{equation}
        Note that $\lambda(v)$ is the level of the vertex $v$ and $\rho_{\lambda(v)} \in \bbC_{*}$.
        The $\bbC_{*}$ action on 
        $\widetilde{\mathcal M}(\mathcal D;\mathcal T;v)$ is defined 
        as in (\ref{formula3636}).
        The quotient space of this action 
        is denoted by
        $$
        \widehat{\mathcal M}^0(\mathcal D;\mathcal T)
        = \widetilde{\mathcal M}^0(\mathcal D;\mathcal T)/\bbC_{*}^{\vert \lambda\vert}.
        $$
        We use the quotient topology to topologize this space.
\end{definition}

Next, we take the quotient of $\widehat{\mathcal M}^0(\mathcal D;\mathcal T)$ with respect to the action of the 
group of automorphisms of the decorated rooted tree $\mathcal T$.
\begin{definition}
An automorphism of the decorated rooted tree $\mathcal T$ is an 
automorphism of the tree $T$ which fixes all the outside 
vertices and commutes with $\alpha$, $m$ and $\lambda$.  The group of automorphisms of $\mathcal T$, which is a finite group, 
is
denoted by ${\rm Aut}(\mathcal T)$.
\par
An element of ${\rm Aut}(\mathcal T)$ exchanges the vertices of 
$T$. Thus it  induces an automorphism of 
$\widetilde{\mathcal M}^0(\mathcal D;\mathcal T)$.
This action is compatible with the $\bbC_{*}^{\vert \lambda\vert}$ action.
Therefore, we obtain an action of ${\rm Aut}(\mathcal T)$ 
on $\widehat{\mathcal M}^0(\mathcal D;\mathcal T)$.
We denote the quotient space by
$$
{\mathcal M}^0(\mathcal D;\mathcal T)
= 
\widehat{\mathcal M}^0(\mathcal D;\mathcal T)/{\rm Aut}(\mathcal T).
$$
\end{definition}
\par
We say decorated rooted trees $\mathcal T = (\mathcal T,\alpha,m,\lambda)$ and $\mathcal T'
 = (\mathcal T',\alpha',m',\lambda')$ are isomorphic, if there exists an isomorphism of the underlying trees $T$, $T'$
which sends root to root, outside vertices to outside vertices, 
$\alpha$ to $\alpha'$, $m$ to $m'$, $\lambda$ to $\lambda'$, and preserves the ordering 
of the outside vertices.
From now on, we do not distinguish between  a decorated rooted tree and its 
isomorphism class.
\par
We now define a compactification of 
$\mathcal M^{0}(\mathcal D\subset X;\alpha;{\bf m})$ as a set.
\begin{definition}
	Given ${\bf m} = (m_0,m_1,\dots,m_{\ell})$, we say that $\mathcal T$ is of type $(\alpha;{\bf m})$
	if the  homology class of $\mathcal T$ is $\alpha$, its input multiplicity is $m_0$ 
	and output multiplicities are $\{m_1,\dots,m_{\ell}\}$.
	Define
	$
	\mathcal M(\mathcal D\subset X;\alpha;{\bf m})
	$
	to be the disjoint union  
	\[
	  \coprod_{\mathcal T}{\mathcal M}^0(\mathcal D;\mathcal T)
	\] 
	where  $\mathcal T$ runs over all the decorated rooted trees of type 
	$(\alpha;{\bf m})$.
\end{definition}
\begin{prop}\label{prop322}
There exists a topology on 
$\mathcal M(\mathcal D\subset X;\alpha;{\bf m})$ 
which is compact and metrizable.
The induced topology on each subspace 
${\mathcal M}^0(\mathcal D;\mathcal T)$ coincides  
with the one defined above.
\end{prop}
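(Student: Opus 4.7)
The plan is to topologize $\mathcal M(\mathcal D\subset X;\alpha;{\bf m})$ by specifying which sequences converge, using the stable map topologies on the moduli spaces of $\mathcal D$ and of the projective bundle $\mathbf{P}(\mathcal N_{\mathcal D}(X)\oplus \C)$ as the basic building blocks, together with iterated $\C_{*}$-rescalings reflecting Definition \ref{Cstaractiondefn313}.

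First I would define convergence as follows. Given $[\mathbf x_n] \in \mathcal M^0(\mathcal D;\mathcal T_n)$ and $[\mathbf x_\infty] \in \mathcal M^0(\mathcal D;\mathcal T_\infty)$, I declare that $[\mathbf x_n] \to [\mathbf x_\infty]$ when the projections under \eqref{form3737} converge in the stable map topology of $\mathcal M_{\ell+1}(\mathcal D;\alpha)$, and when, for each level $i=1,\dots,|\lambda_\infty|$ of the limit tree $\mathcal T_\infty$, there exist rescaling constants $c_n^{(i)} \in \C_*$ such that the rescaled holomorphic maps $U_n\colon \Sigma_n \to \mathbf{P}(\mathcal N_{\mathcal D}(X)\oplus \C)$ defined by \eqref{oomojiU} converge, level-by-level, to the corresponding components of $\mathbf x_\infty$. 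On each stratum $\mathcal M^0(\mathcal D;\mathcal T)$ this topology agrees with the quotient-of-fiber-product topology of \eqref{form31666}, and the incidences between strata are governed by the rule that $\mathcal M^0(\mathcal D;\mathcal T)$ lies in the closure of $\mathcal M^0(\mathcal D;\mathcal T')$ precisely when $\mathcal T$ is obtained from $\mathcal T'$ by sprouting new inside vertices whose levels respect clause (9) of Definition \ref{defn315}.

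To prove compactness, I would apply Gromov compactness twice. For an arbitrary sequence $[\mathbf x_n]$, first pass to a subsequence so that the projected sequence in $\mathcal M_{\ell+1}(\mathcal D;\alpha)$ converges to $((\Sigma_\infty,\vec w_\infty),u_\infty)$. Next, lift each $\mathbf x_n$ to a holomorphic map $U_n$ into $\mathbf{P}(\mathcal N_{\mathcal D}(X)\oplus \C)$, whose homology class is fixed by $\alpha$ and $\mathbf m$ via conditions (i)--(ii) preceding \eqref{map35}; applying Gromov compactness again, and rescaling components iteratively by elements of $\C_*$, yields a stable map limit whose irreducible components are of three kinds: components of the form \eqref{oomojiU} coming from meromorphic sections, components mapped entirely into the zero section $\mathcal D_0$, and components mapped entirely into the infinity section $\mathcal D_\infty$. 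Organizing these components into levels and recording the contact multiplicities at each node produces a decorated rooted tree $\mathcal T_\infty$; the balancing condition \eqref{bananching} follows from conservation of intersection multiplicities with $\mathcal D$, while the level-ordering and stability clauses of Definition \ref{defn315} come from the rescaling choices.

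Finally, metrizability will follow from Hausdorffness, second-countability, and compactness via Urysohn: second-countability holds because there are only countably many decorated rooted trees of type $(\alpha;\mathbf m)$ and each stratum, being a quotient of a fiber product of second-countable stable map moduli spaces, is itself second-countable; Hausdorffness reduces to uniqueness of Gromov limits combined with uniqueness of the rescaling factors modulo the $\C_*^{|\lambda|}$-action. The main obstacle will be the iterated rescaling procedure used to extract the limit tree: since $\mathcal D$ is assumed neither positive nor negative, one cannot import compactness from symplectic field theory, and one must instead mimic the expansion procedure from the relative Gromov--Witten theory of Jun Li and Ionel--Parker, carefully choosing $|\lambda_\infty|$ independent scaling factors so that every level of the limit is nontrivial and nondegenerate, and verifying that the resulting combinatorial data satisfies every clause of Definition \ref{defn315}.
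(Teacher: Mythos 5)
Your proposal follows essentially the same route as the paper's proof (which is deferred to Section \ref{sec:topology}): convergence is defined by stable-map convergence of the underlying curves together with one rescaling constant per level subject to the ratio condition $\rho_{a,j}/\rho_{a,j'}\to\infty$, sequential compactness is obtained by iterated Gromov compactness after dilation into ${\bf P}(\mathcal N_{\mathcal D}(X)\oplus\C)$, and metrizability follows from compactness, Hausdorffness and second countability via Urysohn. The only step you pass over lightly, and which the paper must verify explicitly (Lemma \ref{Kurarev}), is that this notion of convergence actually defines a topology, i.e.\ that the associated closure operator satisfies the Kuratowski axioms — in particular idempotence, which requires composing the rescaling factors of a doubly indexed sequence via a diagonal argument.
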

The topology in Proposition \ref{prop322}
is called the {\it RGW topology}.
This proposition  is proved in Section \ref{sec:topology}.


\begin{remark}
        We can also take the following fiber product.
        \begin{equation}\label{formula317}
        \prod_{v \in  C^{\rm ins}_0(T)}{\mathcal M}^{0}(\mathcal D;\mathcal T;v)\,\,
        {}_{{\rm EV}}\times_{\star} 
        \prod_{e \in C^{\rm ins}_1(T)} \Delta
        \end{equation}
        instead of (\ref{form31666}), where $\widetilde{\mathcal M}^{0}(D;\mathcal T;v)$
        is replaced by ${\mathcal M}^{0}(D;\mathcal T;v)$, 
        which is by definition $\widetilde{\mathcal M}^{0}(D;\mathcal T;v)/\bbC_{*}$.
        Let $h = \#C_0^{\rm ins}(T) - \vert \lambda\vert$.
        Then there exists an action of the group 
        $$
        \prod_{i=1}^{\vert \ell\vert}\frac{\bbC_{*}^{h_i}}{\bbC_{*}} \cong \bbC_{*}^h
        $$
        on $\widehat{\mathcal M}^0(\mathcal D;\mathcal T)$ with $h_i = \#\{v \mid \lambda(v) = i\}$ such that
        $(\ref{formula317})$ is the quotient space.
        This $ \bbC_{*}^h$ action and the space \eqref{formula317} are not used in this paper. 
        In fact, the disjoint union of the spaces \eqref{formula317}
        for various $\mathcal T$ with the 
        natural quotient topology does not 
        carry a Kuranishi structure, because this disjoint union is not Hausdorff. 
        We can, however, shrink it by a finite-to-one map 
        and obtain a Hausdorff space. 
        The {\it log compactification} by \cite{T} seems to be related to this space. For the 
        proof of some of the conjectures in \cite[Section 6]{part3:FH} %
         (but not for the proof of Theorem \ref{mainthm}), 
        it seems necessary to construct a Kuranishi structure on 
        $\mathcal M(\mathcal D\subset X;\alpha;{\bf m})$ and perturbations 
        in a way that are invariant under these strata-wise $ \bbC_{*}^h$ actions.
        
\end{remark}

\begin{example}
	Consider a decorated rooted tree $\mathcal T$ of type $(\alpha;{\bf m})$ such that $\mathcal T$ 
	has exactly one inside vertex $v$, $\alpha(v) = \alpha$, and $\lambda(v) = 1$. There exists a unique such decorated 	rooted tree.
	(The multiplicities of outside edges are determined by 
	${\bf m}$ and there is no inside edge. The balancing 
	condition (7) in Definition \ref{defn315} is a consequence of 
	(\ref{form3535}).)
	We call this decorated rooted tree the {\it minimal tree} of type $(\alpha;{\bf m})$
	and denote it by $\mathcal T^0_{\alpha;{\bf m}}$.
	It is easy to see from the definition that
	\begin{equation}\label{identify30}
		{\mathcal M}^0(\mathcal D, \mathcal T^0_{\alpha;{\bf m}})= \mathcal M^0(\mathcal D\subset X;\alpha;{\bf m}).
	\end{equation}
\end{example}

Next, we define the notion of level shrinking of decorated rooted trees. This notion will be useful to describe the `closure' of a stratum ${\mathcal M}^0(\mathcal D;\mathcal T)$ 
in $\mathcal M(\mathcal D\subset X;\alpha;{\bf m})$.
\begin{definition}\label{defn326levelsh}
	Let $\mathcal T = (T,\alpha,m,\lambda)$ be a decorated rooted tree as in  
	Definition \ref{defn315} and $\vert \lambda\vert$ be the number of levels of $\mathcal T$. 
	Let $1 \le i < i+1 \le \vert \lambda\vert$.
	We define the {\it  decorated rooted tree obtained by $(i,i+1)$ level 
	shrinking from $\mathcal T$} as follows.
	\par
        We first define a tree $T'$.
        We shrink each of the edges $e$ in $T$ 
        such that $(\lambda(s(e)),\lambda(t(e))) = (i,i+1)$ or  $(\lambda(s(e)),\lambda(t(e))) = (i+1,i)$,
        to a point. We thus obtain a tree $T'$
        together with a map $\pi : T \to T'$.
        This map $\pi$ is an isomorphism on the 
        outside edges. The outside edges of $T'$ are by definition 
        the images of the outside edges of $T$.
        \par
        We define $\alpha' : C^{\rm ins}_0(T') \to \Pi_2(\mathcal D;\bbZ)$ as follows:
        \begin{equation}
        \alpha'(v') = 
        \sum_{v \in C^{\rm ins}_0(T) \cap \pi^{-1}(v')} \alpha(v).
        \end{equation}
        We observe that for any edge $e'$ of  $T'$,
        the inverse image $\pi^{-1}(e'\setminus \partial e')$ is $e \setminus 
        \partial e$ for some edge $e$ of $T$.
        We define $m' : C^{\rm ins}_1(T') \to \bbZ \setminus \{0\}$ by:
        \begin{equation}
        m'(e') = 
        m(e).
        \end{equation}
        We finally define a level function $\lambda' : C^{\rm ins}_1(T') \to \{1,\dots,\vert \lambda\vert -1\}$ 
        as follows.
        Let $v' = \pi(v)$ with $v \in  C^{\rm ins}_0(T)$, $v' \in  C^{\rm ins}_0(T')$.
        \begin{equation}
        \lambda'(v') 
        = 
        \begin{cases}
        \lambda(v) & \text{if $\lambda(v) < i$}, \\
        i & \text{if $\lambda(v) = \, i\,\,\, \text{or}\,\,\, i+1$}, \\
        \lambda(v)-1 & \text{if $\lambda(v) > i+1$}.
        \end{cases}
        \end{equation}
        It is easy to see that, in the second case, the right hand side 
        is independent of the choice of $v$ and it only depends on $v'$.
        It is also easy to show that $(T';\alpha',m',\lambda')$
        has the properties of Definition \ref{defn315}.
        We say $\mathcal T'$ is obtained from $\mathcal T$ by 
        {\it level shrinking} and write $\mathcal T < \mathcal T'$ 
        if $\mathcal T'$ is obtained from $\mathcal T$ by 
        a finite number of $(i,i+1)$ level shrinkings, possibly for different choices of $i$.
        We write $\mathcal T \le \mathcal T'$ if $\mathcal T < \mathcal T'$
        or $\mathcal T = \mathcal T'$.
\end{definition}
Figure \ref{Figure12} below is obtained from Figure \ref{Figure10}   by 
$(3,4)$ level 
shrinking.
\par
\begin{figure}[h]
\centering
\includegraphics[scale=0.4]{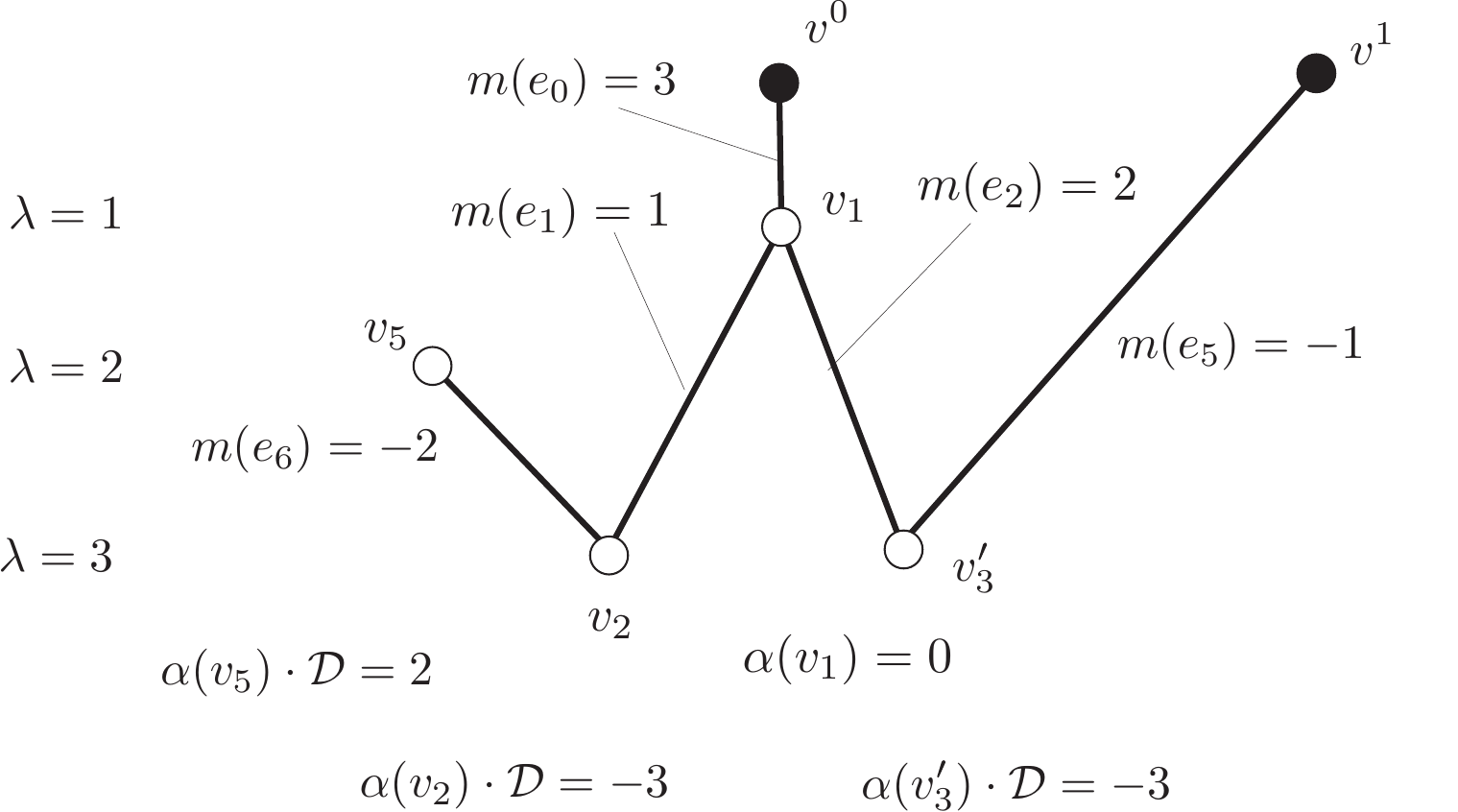}
\caption{level shrinking}
\label{Figure12}
\end{figure}

The following definition gives a notion of isotropy group for the elements of ${\mathcal M}^0(\mathcal D;\mathcal T)$. 
\begin{definition}
	Let $\frak x \in {\mathcal M}^0(\mathcal D;\mathcal T)$.
	By definition, $\frak x$ is the equivalence class of 
	an element $\widehat{\frak x} = ([{\bf x}_v]:v \in C_0^{\rm ins}(T))$ of 
	$\widehat{\mathcal M}^0(\mathcal D;\mathcal T)$
	by the action of ${\rm Aut}(\mathcal T)$
	where ${\bf x}_v \in \widetilde{\mathcal M}^0(\mathcal D;\mathcal T;v)$.
	We also fix a representative $((\Sigma_v,\vec w_v);u_v;s_v)$ for ${\bf x}_v$.
	The isotropy group $\Gamma_{\frak x}$ of $\frak x$ consists 
	of the elements $(g,\{I_v\}_{v \in C_0^{\rm ins}(T)})$ where $g\in {\rm Aut}(\mathcal T) $
	and $I_v:(\Sigma_v,\vec w_v)\to (\Sigma_{g(v)},\vec w_{g(v)})$ such that $u_{g(v)}\circ I_{v}=u_v$. 
	Moreover, if $1\leq i \leq |\lambda|$ with $|\lambda|$ being the number of the levels of $\mathcal T$,
	then there is a constant number $a_i\in \bbC_{*}$ such that for any $v$ with $\lambda(v)=i$ we have:
	\[
	  s_{g(v)}\circ I_{v}=a_i\cdot s_v.
	\]
	Projection of $(g,\{I_v\}_{v \in C_0^{\rm ins}(T)})$ to $g$ induces a map to ${\rm Aut}(\mathcal T)$ 
	and the kernel of this map is equal to $ \prod_{v \in C_0^{\rm ins}(T)} {\rm Aut}({\bf x}_v)$.
	In particular, if we define:
	\[
	  {\rm Aut}(\mathcal T;\frak x) =\{ g \in {\rm Aut}(\mathcal T) \mid g \widehat{\frak x} = \widehat{\frak x} \}.
	\]
	then we have the short exact sequence
	\begin{equation}
		1 \to  {\rm Aut}(\widehat{\frak x})\to \Gamma_{\frak x}\to {\rm Aut}(\mathcal T;\frak x) \to 1.
	\end{equation}
\end{definition}

\begin{remark}
	In \cite{part2:kura}, we construct a Kuranishi structures on $\mathcal M(\mathcal D\subset X;\alpha;{\bf m})$.
	Each element of a Kuranishi structure has an isotropy group by definition and 
	we may assume that the Kuranishi structure on $\mathcal M(\mathcal D\subset X;\alpha;{\bf m})$ 
	is chosen such that the isotropy group of $\frak x \in {\mathcal M}^0(\mathcal D;\mathcal T)
	\subset \mathcal M(\mathcal D\subset X;\alpha;{\bf m})$ is 
	$ \Gamma_{\frak x}$.
\end{remark}


We finally explain an alternative way to specify the level function.
This alternative approach shall be useful in \cite{part2:kura,part3:FH}. See \cite[Lemma 4.3]{T} for a related notion.
\begin{definition}\label{quasi-ord-def}
A binary relation $\le$ on a set $A$ defines a {\it quasi partial order}
if the following properties hold:
\begin{enumerate}
\item
$a \le b$ and $b \le c$ imply $a \le c$.
\item
$a \le a$.
\end{enumerate}
A {\it quasi order} on $A$ is a quasi partial order such that for any two elements $a,b\in A$, at least one of the relations $a \le b$ or $b \le a$ holds. For a quasi partial order, we write $a < b$ if $a \le b$ but not $b \le a$.
\par
Let $\le$, $\le'$ be two quasi partial orders on $A$. 
We say that $\le'$ is {\it finer} than $\le$ if the following holds.
\begin{enumerate}
\item[(*)]
If $a < b$ then $a <' b$.
\end{enumerate}
The similar notion is defined for quasi orders in an obvious way.
\end{definition}

Let $\mathcal T_0 = (T,\alpha,m)$ be an object, which has the properties of Definition \ref{defn315} except (6) and (9).
In particular, $\mathcal T_0$ is equipped with the decomposition of the 
set of vertices as in part (1) of Definition \ref{defn315},
the choice of the root and the enumeration of outside vertices as in part
(3) of Definition \ref{defn315}.
\par
We then obtain a  
quasi partial order $\le_0$ on $C^{\rm ins}_0(T)$
as follows:
\begin{enumerate}
\item
Suppose $v_1 \ne v_2$. We write $v_1 \le_{00} v_2$ if and only if there exists an 
edge of $e$ such that $\partial e =\{v_1, v_2\}$
and one of the following holds.
\begin{enumerate}
\item $s(e) = v_1$ and $m(e) > 0$.
\item $s(e) = v_2$ and $m(e) < 0$.
\end{enumerate}
\item
	Suppose $v \ne v'$.We write $v \le_{0} v'$ if and only if there exists $v_1,\dots,v_n$ such that 
	$v_1 = v$, $v_n = v'$ and $v_i  \le_{00} v_{i+1}$.
\item
	We also require $v \le_0 v$.
\end{enumerate}
\par
It is easy to see that $\le_0$ defines a 
quasi partial order on $C_0(T)$.
\begin{lemma}\label{lemma333333}
	Let  $\mathcal T_0$ be as above.
	There is a one to one correspondence between the following two objects:
	\begin{enumerate}
		\item The map $\lambda : C^{\rm ins}_0(T) \to \bbZ_{+}$ satisfying parts {\rm (6)} and {\rm (9)} of Definition \ref{defn315}.
		\item A quasi order on $C^{\rm ins}_0(T)$ which is finer than $\le_0$.
	\end{enumerate}
	In particular, $\mathcal T_0$ together with a quasi-order $\le$ finer than $\le_0$ 
	determines a decorated rooted tree.
\end{lemma}
\begin{proof}
Suppose $\lambda$ is given.
We define $\le$ by $v \le v'$ if and only if $\lambda(v) \le \lambda(v')$.
Definition \ref{defn315} (9) implies that $\le$ is finer than $\le_0$.
\par
Suppose we are given a quasi order 
$\le$ on $C^{\rm ins}_0(T)$ which is finer than $\le_0$.
We define a relation $\sim$ on $C^{\rm ins}_0(T)$
by $v \sim v'$ if and only if $v \le v'$, $v' \le v$.
It is easy to see that $\sim$ is an equivalence relation.
$\le$ induces an order on 
$C^{\rm ins}_0(T)/\sim$. Then $C^{\rm ins}_0(T)/\sim$, as an 
ordered set,
is isomorphic to $(\{1,\dots,\vert \lambda\vert\},\le)$.
We thus obtain $\lambda : C^{\rm ins}_0(T) \to \{1,\dots,\vert \lambda\vert\}$.
Definition \ref{defn315} (9) follows from the assumption that 
$\le$ is finer than $\le_0$.
\end{proof}

\subsection{RGW Compactification: a Single Disk or Strip Component}
\label{subsec:singledisk}

Next, we describe a part of the construction of the RGW compactification 
of the moduli space of pseudo-holomorphic disks and strips 
in $X \setminus \mathcal D$ where there is only one disk (or strip) 
component involved.
The story of strips is very similar to the case of disks.
So we discuss the case of moduli space of pseudo-holomorphic disks in detail 
and then we explain how the case of strips should be modified.
\par
Let $L \subset X \setminus \mathcal D$ be a compact Lagrangian submanifold
and $\beta \in \Pi_2(X,L;\bbZ)$, $\alpha \in \Pi_2(X;\bbZ)$.
Let ${\bf m} = (m_1,\cdots,m_{\ell})$ be an $\ell$-tuple of positive integers and $k\ge 0$.

\begin{definition}\label{defn334444}
We denote by $\mathcal M_{k+1}^{\rm reg, d}(\beta;{\bf m})$
the set of all the isomorphism classes of $((\Sigma,\vec z,\vec w),u)$ with the following 
properties.
\begin{enumerate}
\item $\Sigma$ is the union of a disk $D^2$ and trees of spheres 
rooted on ${\rm Int}(D^2)$.
(We require that any singularity of $\Sigma$ is a nodal singularity.)
\item $\vec z = (z_0,\dots,z_k)$ and $z_i \in \partial\Sigma$.
The points $z_0,\dots,z_k$ are distinct and respect the 
counter clockwise cyclic order on 
$S^1 = \partial\Sigma$.
\item $\vec w = (w_1,\dots,w_{\ell})$ and $w_i \in {\rm Int}(\Sigma)$.
The points $w_1,\dots,w_{\ell}$ are distinct and away from the nodes of $\Sigma$.
\item
$u : \Sigma \to X$ is a pseudo-holomorphic map, $u(\partial\Sigma) \subset L$,
and the homology class of $u$ is $\beta$.
\item $u(w_i) \in \mathcal D$. Moreover,
$u^{-1}(\mathcal D) = \{w_1,\dots,w_{\ell}\}$.
\item The order of tangency of $u$ to $\mathcal D$ at $w_i$ is $m_i$.
\item
$((\Sigma,\vec z,\vec w),u)$ is stable in the sense of stable maps.
(See, for example, \cite[Subsection 2.1]{fooobook2}.)
\end{enumerate}
We say $((\Sigma,\vec z,\vec w),u)$ is isomorphic to 
$((\Sigma',\vec z^{\,\prime},\vec w^{\,\prime}),u')$ if there exists a biholomorphic map 
$v : \Sigma \to \Sigma'$ such that $v(z_i) = z'_i$, $v(w_i) = w'_i$
and $u'\circ v = u$. We define evaluation maps
\begin{equation}\label{eval330d}
	\qquad\qquad{\rm ev}_i : 
	\mathcal M_{k+1}^{\rm reg, d}(\beta;{\bf m})
	\to \mathcal D,
	\qquad i = 1,\dots,\ell
\end{equation}
at $w$'s as follows:
\begin{equation}\label{eval331s}
{\rm ev}_i((\Sigma,\vec z,\vec w),u) = u(w_i).
\end{equation}
\end{definition}
Note that we include the case that ${\ell} =0$. In this case, we require that $u(\Sigma) \cap \mathcal D = \emptyset$ and write $\mathcal M_{k+1}^{\rm reg, d}(\beta;\emptyset)$ for the corresponding moduli space.

\begin{figure}[h]
\centering
\includegraphics[scale=0.4]{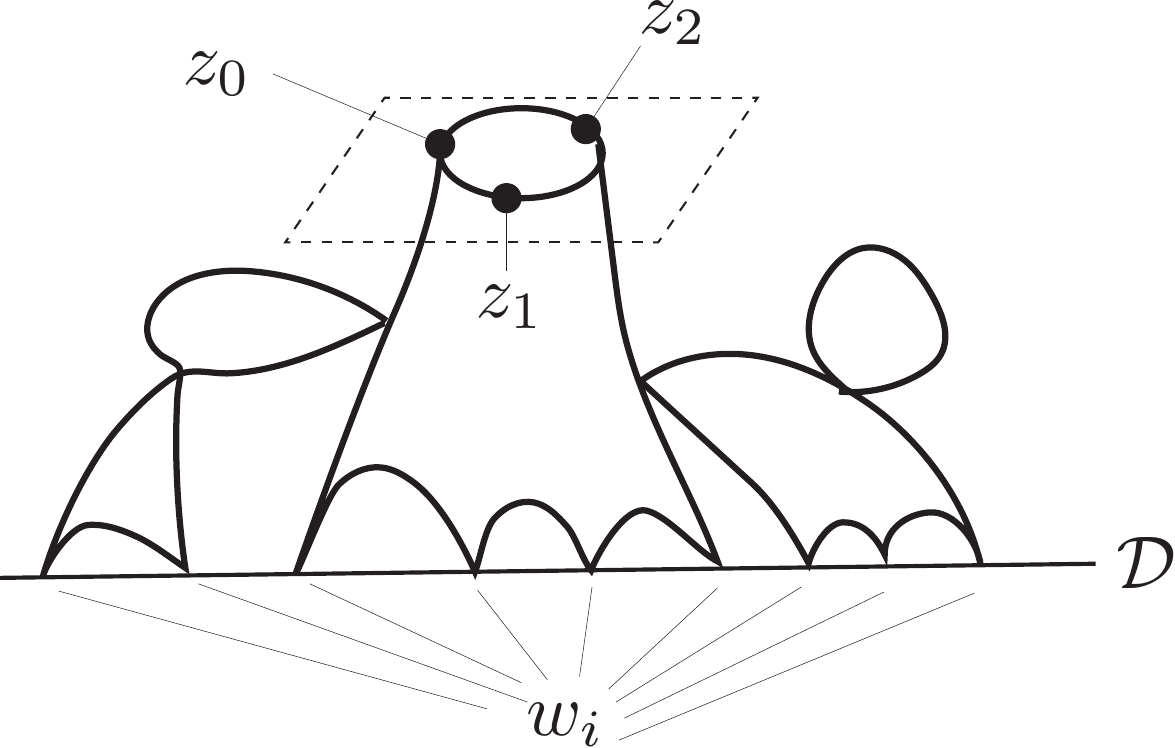}
\caption{An element of $\mathcal M_{k+1}^{\rm reg, d}(\beta;{\bf m})$.}
\label{Figurestrip1}
\end{figure}
\begin{definition}\label{defn333555}
        We denote by $\mathcal M^{\rm reg, s}(\alpha;{\bf m})$ 
        the set of the isomorphism classes of objects $((\Sigma,\vec w),u)$ with the following 
        properties.
        \begin{enumerate}
                \item $\Sigma$ is a connected nodal curve of genus $0$.
                \item $\vec w = (w_1,\dots,w_{\ell})$, $w_i \in \Sigma$. The points $w_1,\dots,w_{\ell}$ are  distinct and away from nodes.
                \item $u : \Sigma \to X$ is a holomorphic map. The homology class of $u$ is $\alpha$.
                \item $u(w_i) \in \mathcal D$. Moreover, $u^{-1}(\mathcal D) = \{w_1,\dots,w_{\ell}\}$.
                \item The order of tangency of $u$ to $\mathcal D$ at $w_i$ is $m_i$.
                \item $((\Sigma,\vec w),u)$ is stable in the sense of stable maps.
        \end{enumerate}
        The definition of isomorphisms of such objects is similar to Definition \ref{defn334444}.
        We also define the following evaluation maps in the same way:
        \begin{equation}\label{form331}
        {\rm ev}_i : 
        \mathcal M^{\rm reg, s}(\beta;{\bf m})
        \to \mathcal D
        \end{equation}
\end{definition}
To obtain our RGW compactification, we consider fiber products
of the above defined moduli spaces $\mathcal M_{k+1}^{\rm reg, d}(\beta;{\bf m})$, 
$\mathcal M^{\rm reg, s}(\alpha;{\bf m})$, and the moduli spaces of the form ${\mathcal M}^0(\mathcal D;\mathcal T)$, which we defined in the previous subsection.
The combinatorial objects to describe these fiber products are given in the following definition:
\begin{definition}\label{defb333366}
Suppose $L$ is a compact Lagrangian submanifold in the complement of the 
smooth divisor $\mathcal D$ in $X$.
A {\it disk-divisor describing tree}, or a {\it DD-tree} for short, is an 
object $\mathcal S = (S,c,m,\alpha,\mathcal T,\le)$
with the following properties:
\begin{enumerate}
\item $S$ is a tree.
The set of the vertices and the edges of $S$ are respectively denoted by $C_0(S)$ and $C_1(S)$.
\item
$c : C_0(S) \to \{{\rm d},{\rm s},{\rm D}\}$ assigns one of the symbols d,s or D to 
each vertex $\rm v$. We call $c({\rm v})$ the {\it color} of $\rm v$.\footnote{We use roman letters ${\rm v}$ and ${\rm e}$ to denote the vertices and the edges of $S$. In the case of decorated rooted trees, we use italic letters $v$ and $e$.}
\item
There exists exactly one vertex whose color is d. 
Here d stands for disk.
We call this vertex the {\it root} of $S$.
There is also a number $k$ associated to the root\footnote{$k+1$ is the number 
of boundary marked points.}.  
(The root will correspond to a moduli space of the form $\mathcal M_{k+1}^{\rm reg, d}(\beta;{\bf m})$.)

\item
	The root or a vertex with color s is 
	joined\footnote{We say a vertex $\rm v$ is joined to a vertex $\rm v'$ if 
	and only if there exists an edge which contains both  $\rm v$ and $\rm v'$.} 
	only to vertices with color D.  
	(Here s and D stand for sphere and divisor, respectively. 
	A vertex with color s 
	will correspond to a moduli space $\mathcal M^{\rm reg, s}(\alpha;{\bf m})$.
	A vertex with color D will correspond to a moduli space 
	${\mathcal M}^0(\mathcal D;\mathcal T({\rm v}))$.)
\item 
There is no edge joining two vertices of color D.
\item
$m : C_1(S) \to \bbZ_{+}$ assigns a positive number to each edge of $S$.
We call it the {\it multiplicity function}: The number $m({\rm e})$ is called 
the {\it multiplicity of the edge} ${\rm e}$.
\item
If $c({\rm v})$ is equal to d,  s, or D, then
$\alpha({\rm v})$ is respectively an element of  $\Pi_2(X,L;\bbZ)$,
$\Pi_2(X;\bbZ)$ or
$\Pi_2(\mathcal D;\bbZ)$.
We call $\alpha({\rm v})$ the {\it homology class of $\rm v$.}
\item
For each vertex $\rm v$ with color D or s,  there exists a unique edge  ${\rm e}_0({\rm v})$
in $S \setminus \{\rm v\}$
which lies in the same connected component of $S \setminus \{{\rm v}\}$
as the root.
This edge is called the {\it first edge} of ${\rm v}$.
\item
Let $\rm v$ be a vertex with color $\rm D$ 
with the first edge ${\rm e}_0({\rm v})$. 
Let the other edges incident to $v$ be denoted by 
${\rm e}_1({\rm v}),\dots,{\rm e}_{\ell({\rm v})}({\rm v})$.
(Here $\ell({\rm v})+1$ is the valency of ${\rm v}$.)
Then $\mathcal T$ assigns to ${\rm v}$ a decorated
rooted tree $\mathcal T({\rm v})$ such that
\begin{enumerate}
\item The homology class of $\mathcal T({\rm v})$ is $\alpha({\rm v})$.
\item
Its input multiplicity is $m({\rm e}_0({\rm v}))$.
\item
Its output multiplicities are $m({\rm e}_i({\rm v}))$, 
$i = 1,\dots,\ell({\rm v})$. 
\end{enumerate}
\item
$\le$ is a quasi  order on
\begin{equation}
C_0^{\rm ins}(\mathcal S) = \bigcup_{{\rm v} \in C_0(S)} C_0^{\rm ins}(\mathcal T({\rm v})),
\end{equation}
We require that the restriction of $\le$ to 
$C_0^{\rm ins}(\mathcal T({\rm v}))$
coincides with $\le_{\rm v}$,
which is induced by the level function of $\mathcal T({\rm v})$ 
using Lemma \ref{lemma333333}. We call any element of $C_0^{\rm ins}(\mathcal S)$
an {\it inside edge of $\mathcal S$}.
\end{enumerate}
If we want to specify the Lagrangian submanifold $L$, then we say 
$\mathcal S$ is a DD-tree for $L$.
\end{definition}
\begin{definition}
The homology class of a DD-tree $\mathcal S = (S,c,m,\alpha,\mathcal T,\le)$
is defined as:
\begin{equation}
\beta(\mathcal S) 
=
\sum_{{\rm v} \in C_0(S)} \alpha({\rm v}) \in \Pi_2(X,L;\bbZ).
\end{equation}
In the above expression, 
if ${\rm v}$ has color s or D we use the homomorphisms
$\Pi_2(X;\bbZ) \to \Pi_2(X,L;\bbZ)$ and
$\Pi_2(\mathcal D;\bbZ) \to \Pi_2(X,L;\bbZ)$
to define the right hand side. We say $\mathcal S$ is a DD-tree of type $(\beta(\mathcal S),k)$.
\end{definition}
\begin{example}\label{exa28}
In Figure \ref{Figure14} a DD tree $S$ is sketched.
The symbols d, s, D denote the vertices of  colors d, s, D.
The number written near each edge is the multiplicity of that edge.

\begin{figure}[h]
\centering
\includegraphics[scale=0.4]{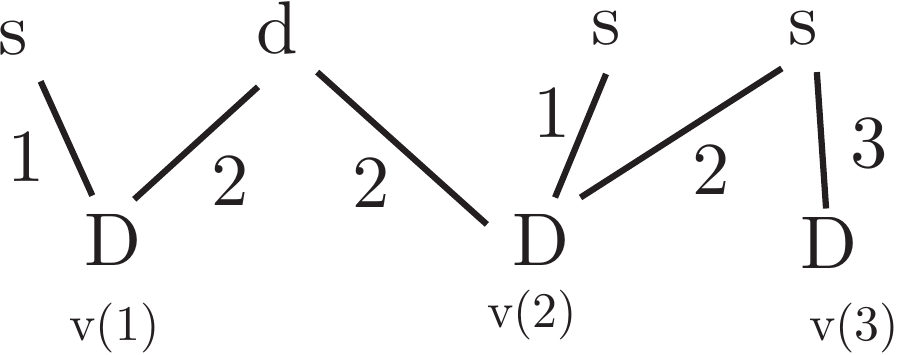}
\caption{An example of graph $S$}
\label{Figure14}
\end{figure}
\begin{figure}[h]
\centering
\includegraphics[scale=0.4]{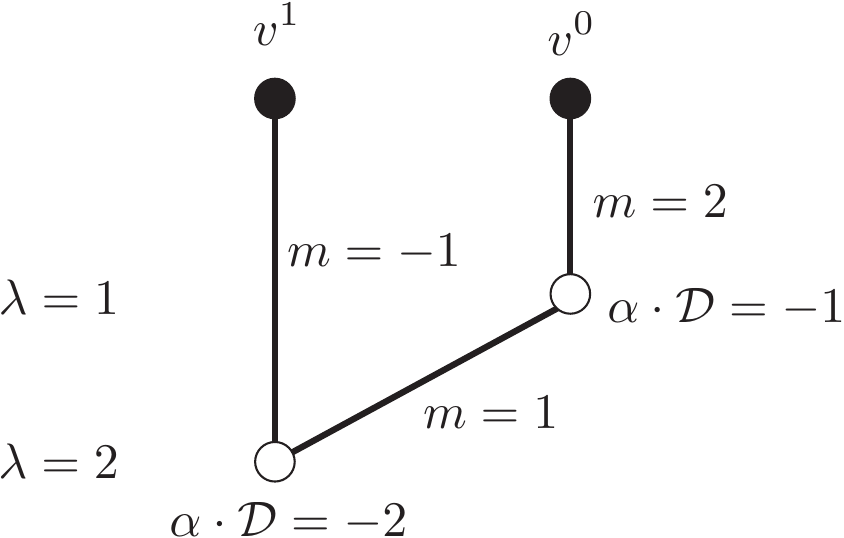}
\caption{$\mathcal T({\rm v}(1))$}
\label{Figure15}
\end{figure}
\begin{figure}[h]
\centering
\includegraphics[scale=0.4]{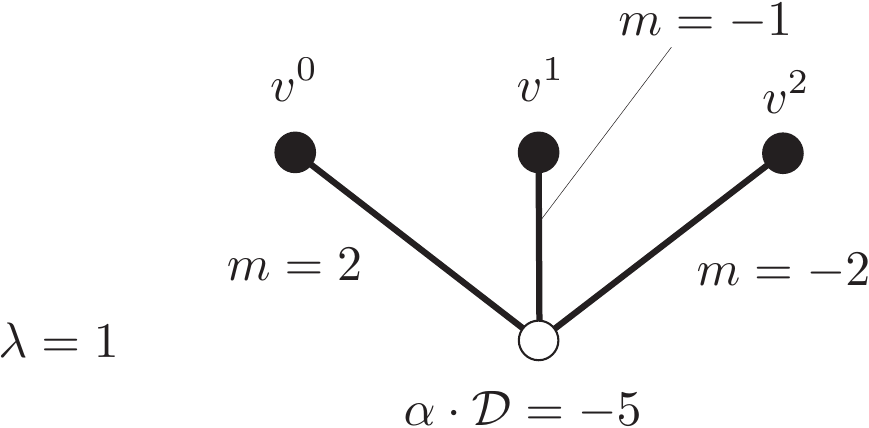}
\caption{$\mathcal T({\rm v}(2))$}
\label{Figure16}
\end{figure}
\begin{figure}[h]
\centering
\includegraphics[scale=0.4]{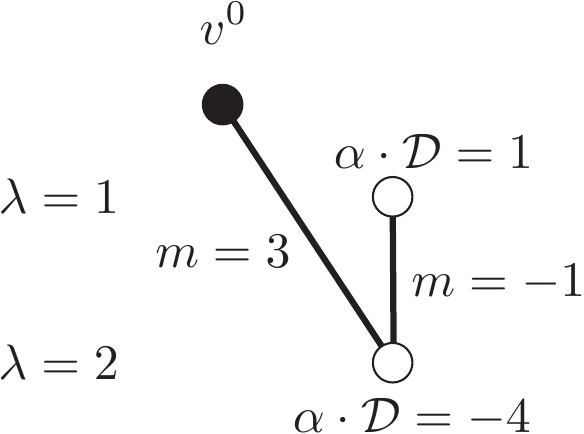}
\caption{$\mathcal T({\rm v}(3))$}
\label{Figure17}
\end{figure}
\end{example}
We associate the decorated rooted trees $\mathcal T({\rm v}(1))$, 
$\mathcal T({\rm v}(2))$, $\mathcal T({\rm v}(3))$ given in Figures \ref{Figure15}, \ref{Figure16}, \ref{Figure17}
to the vertices ${\rm v}(1)$, ${\rm v}(2)$, ${\rm v}(3)$, respectively.
We put these trees on the position of the corresponding 
vertices of $S$ and then identify the outside edges of $\mathcal T({\rm v}(i))$
with the corresponding edges of $S$. We thus obtain 
the tree $\hat S$ in Figure \ref{Figure18}.
\begin{figure}[h]
\centering
\includegraphics[scale=0.5]{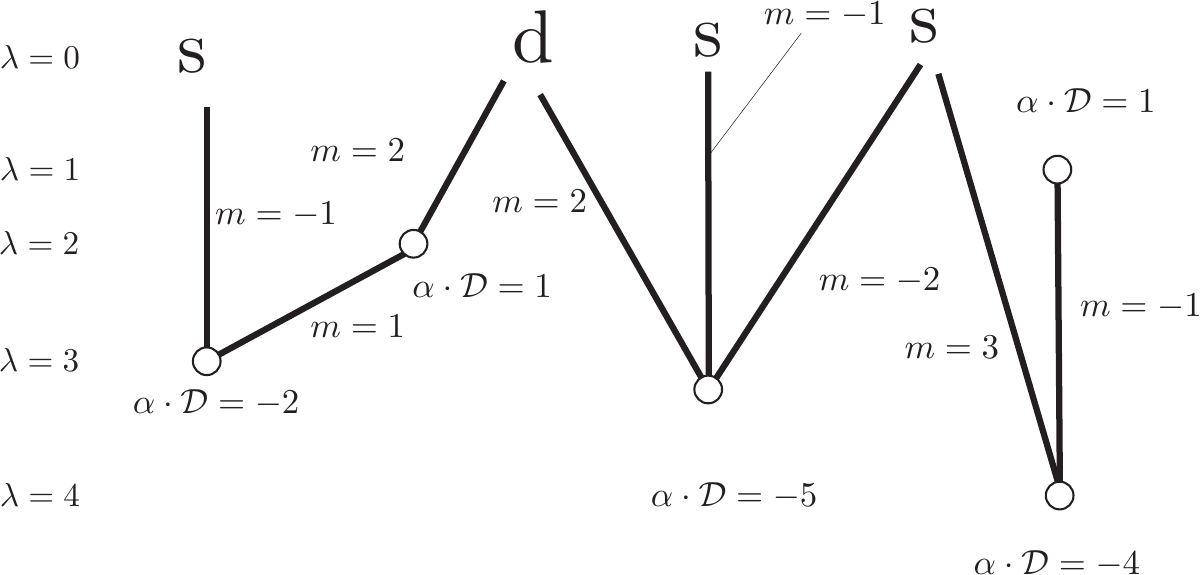}
\caption{Detailed tree $\widehat{S}$}
\label{Figure18}
\end{figure}
\begin{figure}[h]
\centering
\includegraphics[scale=0.3]{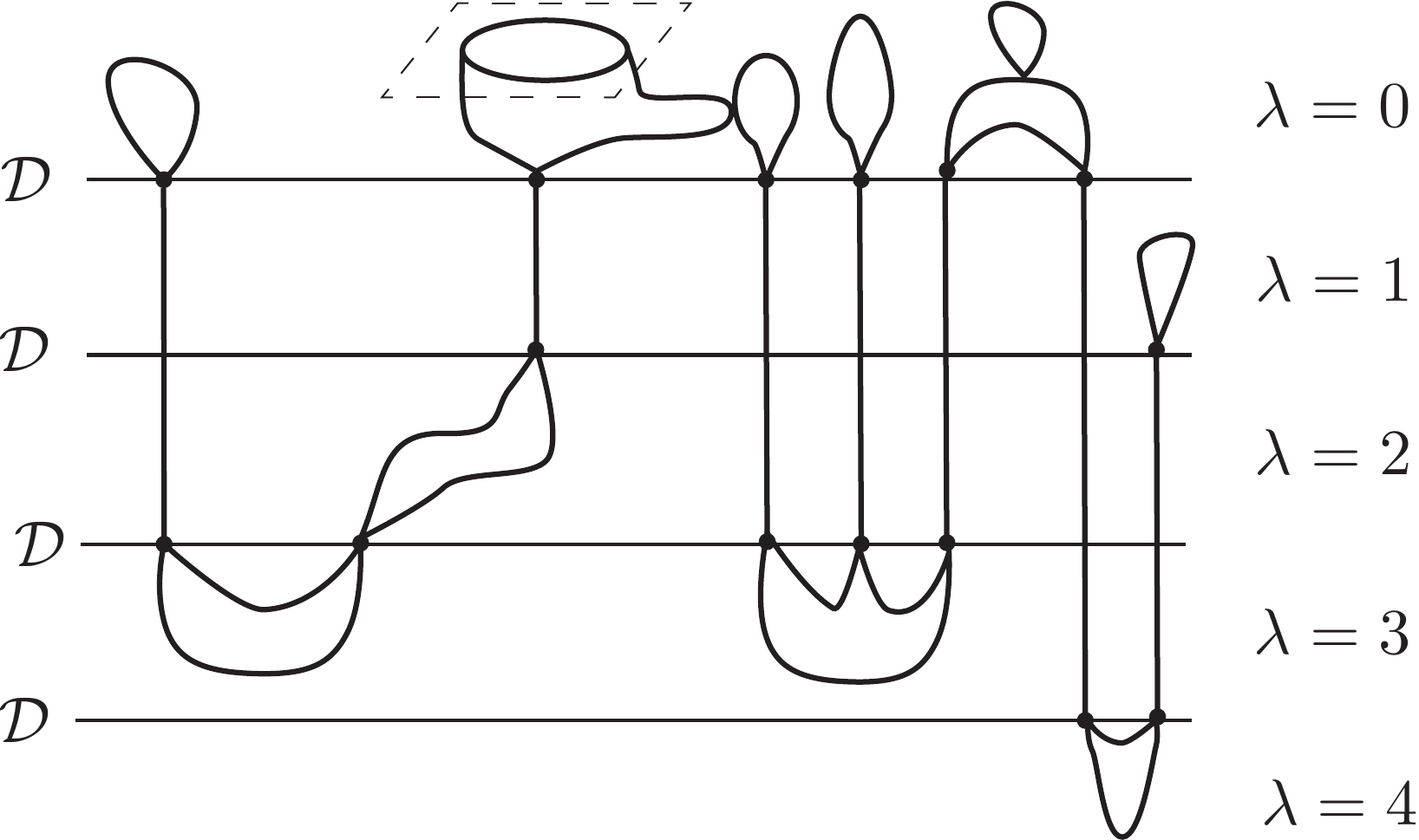}
\caption{object corresponding to $\widehat{S}$ in Figure \ref{Figure18}.}
\label{Figure19}
\end{figure}
We call $\widehat S$ the detailed tree associated to $\mathcal S$. The level function $\lambda$ in Figure \ref{Figure18} is defined using the quasi order on $C_0^{\rm ins}(\mathcal S)$.
Figure \ref{Figure18} describes a configuration of pseudo-holomorphic curves as in Figure \ref{Figure19}. (See (\ref{form339}).)
We summarize the properties of the detailed 
tree associated to $\mathcal S$ in the following lemma.

\begin{lemma} \label{detailed-tree-properties}
We can associate $(\hat S,c,\alpha,m,\lambda)$
to $\mathcal S$ which satisfies the following properties 
in addition to {\rm (1)-(4)} and {\rm (7), (8)} in Definition \ref{defb333366}.
\begin{enumerate}
\item[(i)]
$m : C_1(\hat S) \to \bbZ \setminus \{0\}$ assigns a nonzero integer to each edges of $S$.
We call it the {\it multiplicity function}. The number $m(e)$ is called 
the {\it multiplicity of the edge} $e$.
\item[(ii)]
The balancing condition \eqref{bananching} is satisfied at 
the vertices with color ${\rm D}$.
\item[(iii)]
The stability condition in part {\rm (8)} of Definition \ref{defn315}  is satisfied 
at vertices with color ${\rm D}$.
\item[(iv)]
We require 
$$
\alpha(v) \cdot \mathcal D = \sum_{v \in e} m(e)
$$
for vertices with color $\rm d$.
We also require 
$$
\alpha(v) \cdot \mathcal D = -m(e_0(v)) + \sum_{v \in e} m(e)
$$
for vertices with color $\rm s$. 
Here $e_0(v)$ is the edge containing $v$ such that $e_0(v)$ 
is contained in the same connected component of 
$S \setminus \{v\}$ as the root.
\item[(v)]
$\lambda : C_0(\hat S) \to \bbZ_{\ge 0}$ is a map such that:
\begin{enumerate}
\item
If $v$ has color ${\rm d}$ or ${\rm s}$ then $\lambda(v) = 0$.
\item
If $v$ has color ${\rm D}$ then $\lambda(v) > 0$.
\item
Part {\rm (9)} of Definition \ref{defn315} holds.
\item
The image of $\lambda$ is $\{0,\dots,\vert \lambda\vert\}$ for some $\vert \lambda\vert > 0$.
\end{enumerate}
\end{enumerate}
\end{lemma}
\begin{proof}
Suppose $\hat S$ is the detailed tree associated to the DD tree $\mathcal S$.
A level function 
can be defined 
on the set of the inside vertices of $\hat S$ 
by the quasi order $\le$ as in 
Lemma \ref{lemma333333}.
For vertices with color s or d, we define its level to be $0$.
The rest of the proof is straightforward.
\end{proof}
\begin{remark}
        Let $e$ be an edge which joins a level $0$ vertex $v$ and a vertex $v'$ of positive level.
        Part (v) of Lemma \ref{detailed-tree-properties} implies that $m(e) > 0$, if $c(v) = {\rm d}$. On the other hand, if $c(v) = {\rm s}$, then $m(e) > 0$ if and only if
        $e\neq e_0(v)$. 
\end{remark}
\begin{remark}\label{rem338}
The restriction of $\lambda$ to 
$C_0^{\rm ins}(\mathcal T({\rm v}))$ may not coincide
with the level function $\lambda_{\rm v}$ of $\mathcal T({\rm v})$ as it can be seen in Example \ref{exa28}.
\end{remark}
\begin{definition}\label{DDtreedetail}
	We call $(\hat S,c,\alpha,m,\lambda)$ the {\it detailed DD-tree} associated to the DD-tree $\mathcal S$. 
\end{definition}
Let $\mathcal S$ be a DD-tree and 
$(\hat S,c,\alpha,m,\lambda)$ be 
its associated detailed DD-tree.
For each vertex $v$ of $\hat S$, we associate a moduli space 
$\widetilde{\mathcal M}^0(\mathcal S;v)$ as follows.
Let $c(v) = {\rm d}$ and $e^v_1,\dots,e^v_{\ell(v)}$ be the 
edges incident to $v$. We then define:
\begin{equation}\label{form335}
\widetilde{\mathcal M}^0(\mathcal S;v)
=
\mathcal M_{k+1}^{\rm reg, d}(\alpha(v);{\bf m}^v),
\end{equation}
where ${\bf m}^v = (m(e^v_1),\cdots,m(e^v_{\ell(v)}))$ and 
$k$ is the non-negative integer associated to the root. 

Let $c(v) = {\rm s}$,  
$e_0^v$ be the first edge of  $v$, 
and $e^v_1,\dots,e^v_{\ell}$ be the other edges of $v$. We define:
\begin{equation}\label{form336}
\widetilde{\mathcal M}^0(\mathcal S;v)
=
\mathcal M^{\rm reg, s}(\alpha(v);{\bf m}^v),
\end{equation}
where ${\bf m}^v = (-m(e^v_0),m(e^v_1),\cdots,m(e^v_{\ell(v)}))$.
Finally, suppose $c(v) = {\rm D}$ with the first edge $e^v_0$, and the remaining edges $e^v_1,\dots,e^v_{\ell(v)}$. We define:
\begin{equation}\label{fpr315315rev}
	\widetilde{\mathcal M}^{0}(\mathcal S;v)= \widetilde{\mathcal M}^{0}(\mathcal D\subset X;\alpha(v);{\bf m}^v),
\end{equation}
where ${\bf m}^v = (-m(e^v_0),m(e^v_1),\cdots,m(e^v_{\ell(v)}))$.
This is similar to (\ref{fpr315315}).
\par
We can also define a map:
$$
{\rm EV} : \prod_{v \in C_0(\hat S)}\widetilde{\mathcal M}^{0}(\mathcal S;v)
\to \prod_{e \in C_1(\hat S)}(\mathcal D \times \mathcal D)
$$
similar to the map (\ref{formnew3188}) as follows:
Let $e \in  C_1(\hat S)$.
Let $\vec{\bf x}= ({\bf x}_v;v \in  C_0(\hat S))$
be an element of the domain of ${\rm EV}$. 
Then ${\rm EV}(\vec{\bf x})_e$, the component of  ${\rm EV}(\vec{\bf x})$
corresponding to the edge $e$, is defined as follows:
\begin{equation}\label{form338}
{\rm EV}(\vec{\bf x})_e
=
({\rm ev}_0({\bf x}_{t(e)}),{\rm ev}_i({\bf x}_{s(e)})) \in \mathcal D \times \mathcal D.
\end{equation}
Here ${\rm ev}_0$ and ${\rm ev}_i$ are given in 
Definition \ref{evaluationmap1}, (\ref{eval330d})  or (\ref{form331}), and $i$ is chosen such that $e$ is the $i$-th edge of $s(e)$.
Note that $t(e)$ has color either ${\rm s}$ or ${\rm D}$. Therefore,  the first edge of $t(e)$ is defined and is $e$.
\par
We define $\widetilde{\mathcal M}^{0}(\mathcal S)$ to be the following fiber product:
\begin{equation}\label{form339}
	\widetilde{\mathcal M}^{0}(\mathcal S)=\prod_{v \in C_0(\hat S)}\widetilde{\mathcal M}^{0}(\mathcal S;v)
	\,_{\rm EV}\times_{\star}\prod_{e \in C_1(\hat S)}\Delta
\end{equation}
where $
\star$ denotes the diagonal inclusion of 
$\prod_{e \in C_1(\hat S)}\Delta$ into $\prod_{e \in C_1(\hat S)}\mathcal D\times \mathcal D$ with $\Delta$ being the diagonal in $\mathcal D\times \mathcal D$.
We also define a $\bbC_{*}^{\vert \lambda\vert}$ action on $\widetilde{\mathcal M}^{0}(\mathcal S)$ 
by
\begin{equation}\label{form33940}
\vec{\rho} \cdot \vec{{\bf x}} = (\rho_{\lambda(v)}{\bf x}_v),
\end{equation}
where 
$\vec{\rho} = (\rho_1,\dots,\rho_{\vert \lambda\vert}) \in \bbC_{*}^{\vert \lambda\vert}$
and 
$\vec{{\bf x}} = ({\bf x}_v) 
\in \prod_{v \in  C^{\rm int}_0(\hat S)}\widetilde{\mathcal M}(\mathcal S;v)$.
In \eqref{form33940}, if $\lambda(v) = 0$, then $\rho_{\lambda(v)}{\bf x}_v$ is defined to be ${\bf x}_v$.
\par
Next, we define:
\begin{equation}\label{form3440}
\widehat{\mathcal M}^{0}(\mathcal S)
=
\widetilde{\mathcal M}^{0}(\mathcal S)/\bbC_{*}^{\vert \lambda\vert}.
\end{equation}
Let ${\rm Aut}(\mathcal S)$ be the group of automorphisms of 
the tree $\hat S$ which preserves $c$, $\alpha$, $m$, 
$\lambda$. The group ${\rm Aut}(\mathcal S)$ acts on 
$\widehat{\mathcal M}^{0}(\mathcal S)$. We finally define
\begin{equation}\label{form342new}
{\mathcal M}^{0}(\mathcal S) =
\widehat{\mathcal M}^{0}(\mathcal S)/{\rm Aut}(\mathcal S).
\end{equation}

\begin{definition}
For $k \in \bbZ_{\ge 0}$ and $\beta \in \Pi_2(X;L;\bbZ)$
define $\mathcal M^{0,{\rm RGW}}_{k+1}(L;\beta)$ 
to be the disjoint union of all the spaces ${\mathcal M}^{0}(\mathcal S)$, 
where $\mathcal S$ is a DD-tree with homology class $\beta$ such that 
$k$ is the nonnegative integer associated to the root.
\end{definition}

We next consider the case of holomorphic strips.
Let $L_0, L_1 \subset X \setminus \mathcal D$ be 
a pair of compact Lagrangian submanifolds.
We assume $L_0$ is transversal to $L_1$.
Let $p,q \in L_0 \cap L_1$ and $\beta \in \Pi_{2}(X;L_1,L_0;p,q)$ (See Definition \ref{defn2121}.)
and $k_0,k_1$ be nonnegative integers.
\begin{definition}\label{defn33strip}
We denote by $\mathcal M_{k_1,k_0}^{\rm reg}(L_1,L_0;p,q;\beta;{\bf m})$
the set of all  isomorphism classes of $((\Sigma,\vec z_0,\vec z_1,\vec w),u)$ with the following properties.
\begin{enumerate}
\item $\Sigma$ is a strip $\bbR \times [0,1]$ together with trees of spheres 
rooted in $\bbR \times (0,1)$.
(We require that the singularities of $\Sigma$ to be nodal singularities.)
\item $\vec z_i = (z_{i,1},\dots,z_{i,k_i})$ and $z_{i,j} \in \bbR \times \{i\}$, 
for $i=0,1$. The points
$z_{i,1},\dots,z_{i,k_i}$ are  distinct,
$z_{1,1} > \dots > z_{1,k_1}$ and $z_{0,1} < \dots < z_{0,k_0}$.

\item $\vec w = (w_1,\dots,w_{\ell})$, $w_i \in {\rm Int}(\Sigma)$.
$w_1,\dots,w_{\ell}$ are distinct points in the interior of $\Sigma$ and are away from the nodal 
singularities.
\item
$u : \Sigma \to X$ is a holomorphic map, $u(\bbR \times \{i\}) \subset L_i$,
\begin{equation}\label{form34342}
\lim_{\tau \to -\infty} u(\tau,t) = p,
\qquad
\lim_{\tau \to +\infty} u(\tau,t) = q,
\end{equation}
and
the homology class of $u$ is $\beta$.
\item 
$u^{-1}(\mathcal D) = \{w_1,\dots,w_{\ell}\}$.
\item The order of tangency of $u$ to $\mathcal D$ at $w_i$ is $m_i$.
\item
$((\Sigma,\vec z_0,\vec z_1,\vec w),u)$ is stable in the sense of stable maps.
\end{enumerate}
The definition of an isomorphism between two objects as above is similar to 
Definition \ref{defn334444}.
\end{definition}
\begin{figure}[h]
\centering
\includegraphics[scale=0.3]{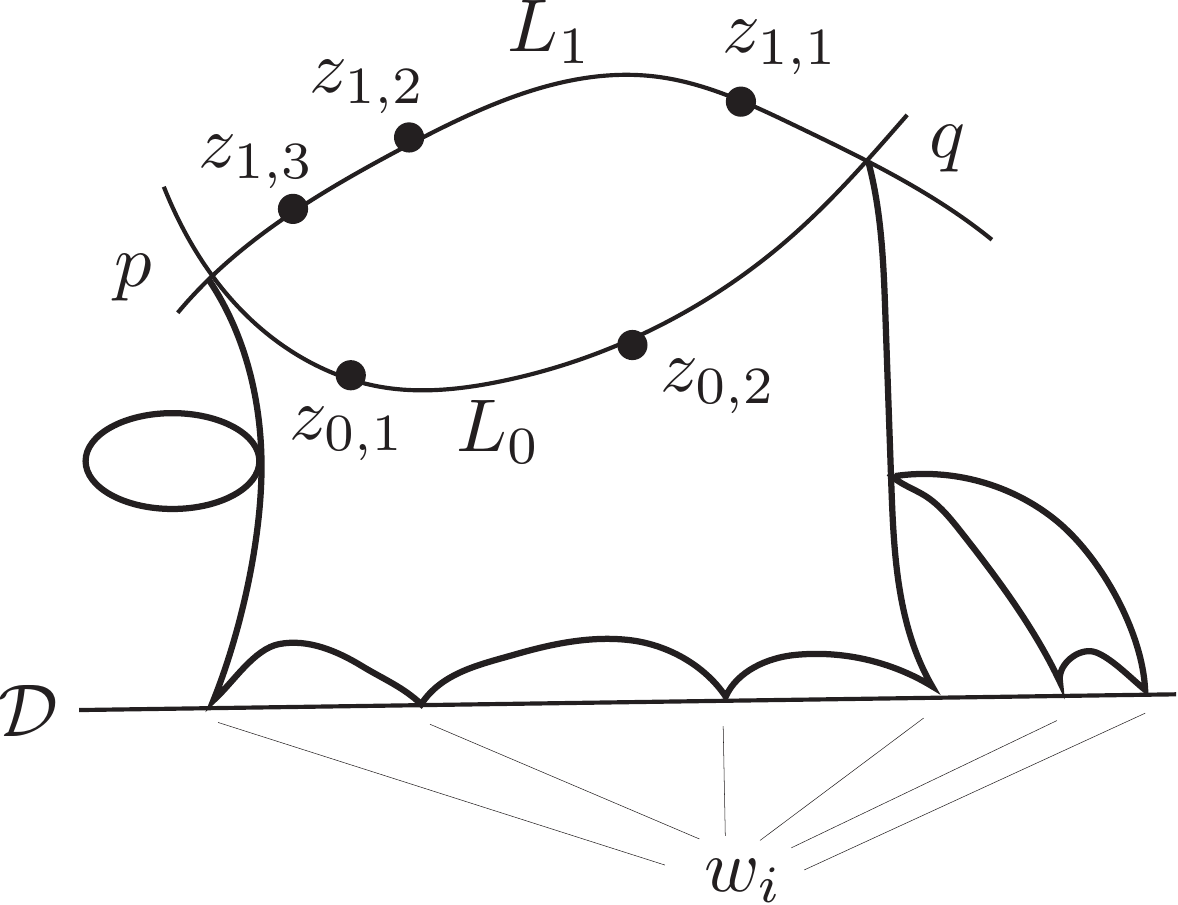}
\caption{An element of $\mathcal M_{k_1,k_0}^{\rm reg}(L_1,L_0;p,q;\beta;{\bf m})$.}
\label{Figurestrip2}
\end{figure}
\begin{definition}
We define evaluation maps:
\[
\hspace{3cm}{\rm ev}_i : \mathcal M_{k_1,k_0}^{\rm reg}(L_1,L_0;p,q;\beta;{\bf m})\to \mathcal D\hspace{1cm}i=1,\dots,\ell
\]
by:
\begin{equation}
{\rm ev}_i((\Sigma,\vec z_0,\vec z_1,\vec w),u) = u(w_i).
\end{equation}
\end{definition}
\begin{definition}
	A {\it strip-divisor describing tree}, or an {\it SD-tree} for short, is an 
	object $\mathcal S = (S,c,m,\alpha,\mathcal T,\le)$
which satisfies the same properties as in Definition \ref{defb333366} except that we replace (2), (3) and (7) by :
\begin{enumerate}
\item[$(2)'$]
$c : C_0(S) \to \{{\rm str},{\rm s},{\rm D}\}$ assigns one of the symbols str, s or D to 
each vertex $\rm v$. We call it the {\it color} of $\rm v$ and denote it by $c({\rm v})$.
\item[$(3)'$]
There exists exactly one vertex whose color is str. The label ``str'' stands for strip. 
	We call the unique vertex with color str the {\it root} of $S$.
	There are also integers $k_0$ and $k_1$ associated to the vertex $v_0$.
	(The root will correspond to a moduli space of the form $\mathcal M_{k_1,k_0}^{\rm reg}(L_1,L_0;p,q;\beta;{\bf m})$.)
\item[$(7)'$]	
If $c({\rm v})$ is equal to str, s, or D, then
$\alpha({\rm v})$ respectively belongs to $\Pi_2(X;L_1,L_0;\bbZ)$,
$\Pi_2(X;\bbZ)$ or
$\Pi_2(\mathcal D;\bbZ)$.
We call $\alpha({\rm v})$ the {\it homology class of $\rm v$.}
\end{enumerate}
Analogous to DD-trees, we can associate a detailed SD-tree to an SD-tree.
\end{definition}
The definition of the moduli space in (\ref{form339}) can be modified 
to define $\widetilde{\mathcal M}^{0}(\mathcal S)$ for an SD-tree.
If $c(v) = {\rm str}$ the moduli space $\widetilde{\mathcal M}^0(\mathcal S;v)$ is defined to be 
\begin{equation}\label{formform343343}
\widetilde{\mathcal M}^0(\mathcal S;v)
=
\mathcal M_{k_1,k_0}^{\rm reg}(L_1,L_0;p,q;\beta;{\bf m}^v)
\end{equation}
where ${\bf m}^v = (m(e^v_1),m(e^v_2),\cdots,m(e^v_{\ell(v)})$ with $e^v_1$, $\dots$, $e^v_{\ell(v)}$ being the edges incident to the root $v$.
If $c(v) = {\rm s}$ or ${\rm D}$, then $\widetilde{\mathcal M}^0(\mathcal S;v)$ is 
defined as in (\ref{form336}), (\ref{fpr315315rev}).
We define $\widetilde{\mathcal M}^{0}(\mathcal S)$, 
$\widehat{\mathcal M}^{0}(\mathcal S)$ and 
${\mathcal M}^{0}(\mathcal S)$ by \eqref{form339},
\eqref{form3440} and \eqref{form342new}, respectively.
Finally we define:
\begin{definition}
For $k_0,k_1 \in \bbZ_{\ge 0}$, $p,q \in L_0 \cap L_1$ and $\beta \in \Pi_2(X;L_1,L_0;\bbZ)$,
 the set $\mathcal M^{0,{\rm RGW}}_{k_1,k_0}(L_1,L_0;p,q;\beta)$ 
is the disjoint union of all ${\mathcal M}^{0}(\mathcal S)$, 
where $\mathcal S$ is an SD-tree 
of type $(\beta;k_0,k_1)$.
\end{definition}

In the next subsection, we define {\it compactifications} of the sets $\mathcal M^{0,{\rm RGW}}_{k+1}(L;\beta)$ and $\mathcal M^{0,{\rm RGW}}_{k_1,k_0}(L_1,L_0;p,q;\beta)$.
The definition of these compactifications uses the notion 
of level shrinking for DD-trees and SD-trees.
We discuss this notion for DD-trees. The case of SD-trees is 
completely similar.
\begin{definition}\label{01levelshrink}
Let 
$(\hat S,c,\alpha,m,\lambda)$ 
be the  detailed DD-tree  associated to a DD-tree $\mathcal S$.
Let $i \in \{0,\dots,\vert \lambda\vert\}$.
If $i > 0$, then $(i,i+1)$-level shrinking of $\mathcal S$ is 
defined as in  
Definition \ref{defn326levelsh}.
We define $(0,1)$-level shrinking of $\mathcal S$ below.
\par
Let $v \in C_0(\hat S)$ with $\lambda(v) = 1$. 
Then $c(v)$ is equal to ${\rm D}$. There are two cases.
\par
\noindent{\bf Case 1}: 
There is no vertex $\hat v$ with $\lambda(\hat v) = 0$, which is joined to $v$. 
In this case, after $(0,1)$ level shrinking, 
$v$ will have the color s and its level is equal to 0.

\noindent{\bf Case 2}:
There exists a vertex $\hat v$ with $\lambda(\hat v) = 0$ which is 
joined to $v$. 
Let $C \subset \hat S$ be the maximal connected 
subgraph of $\hat S$ which contains 
$v$ and whose vertices have level 0 or 1.
We shrink $C$ to a new vertex $v'$.
The color of $v'$ is d if the root of $\hat S$ is in $C$. Otherwise, the color of $v'$ is s.
All the edges $e$, which are joined to a vertex of $C$, but are not in $C$, will be joined to $v'$.
We also define:
$$
\alpha(v') = \sum_{\hat v \in C} \alpha(\hat v),
\qquad \lambda(v') = 0.
$$
\par
We perform this operation to all the 
vertices of level 1.
We change the level of all the vertices $v$ with $\lambda(v) > 1$ 
to $\lambda(v) -1$.
The resulting tree together with $c,\alpha,m$ and $\lambda$
is the detailed DD-tree associated to a DD-tree. We say this DD-tree is obtained from $\mathcal S$ by $(0,1)$ level shrinking.
\end{definition}
\begin{example}
In Figure \ref{Figure20}, we sketch the detailed tree for a DD-tree $\mathcal S$ 
together with its level function.
Figure \ref{Figure21} gives the detailed tree obtained 
from $\mathcal S$ by  $(0,1)$ level shrinking.
\begin{figure}[h]
\centering
\includegraphics[scale=0.3]{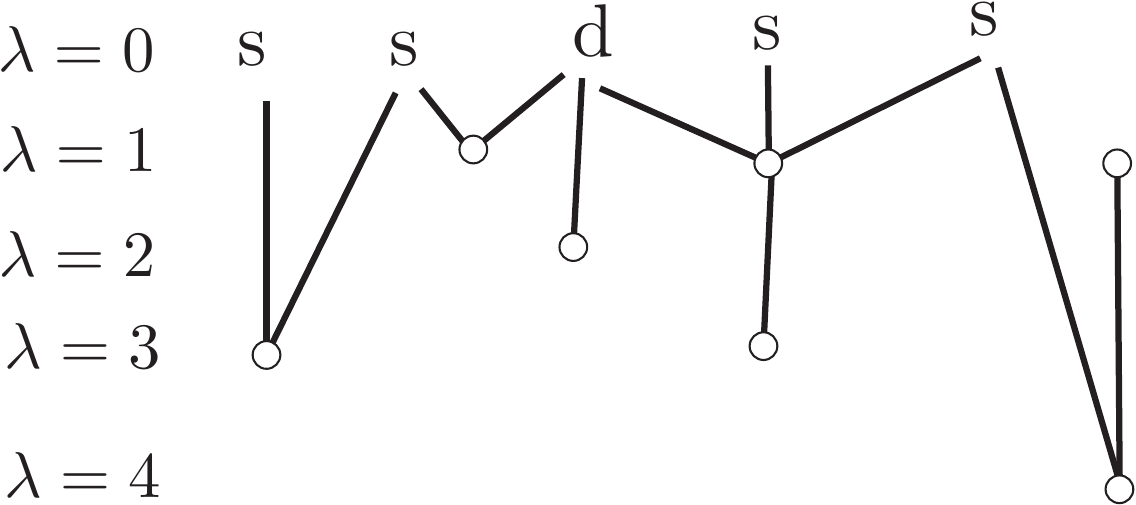}
\caption{Before (0,1) level shrinking.}
\label{Figure20}
\end{figure}
\begin{figure}[h]
\centering
\includegraphics[scale=0.3]{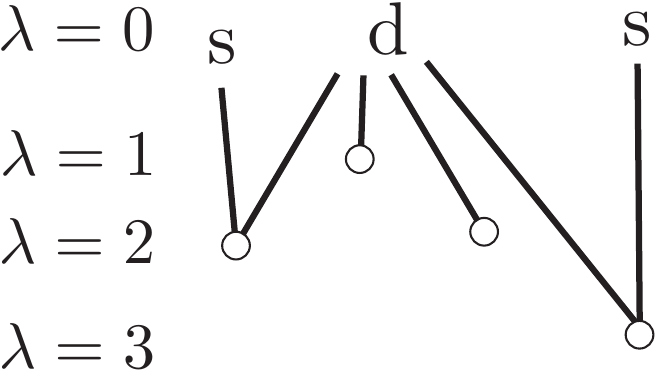}
\caption{After (0,1) level shrinking.}
\label{Figure21}
\end{figure}
\end{example}

\begin{definition}
We write $\mathcal S > \mathcal S'$ if $\mathcal S$ is obtained from 
$\mathcal S'$ by a finite number of $(i,i+1)$ level shrinkings, possibly for different choices of $i$. 
We also write $\mathcal S \ge \mathcal S'$ if $\mathcal S > \mathcal S'$ or $\mathcal S = 
\mathcal S'$.
\end{definition}
We define
\begin{equation}
{\mathcal M}(\mathcal S)
=
\bigcup_{\mathcal S' \le \mathcal S} {\mathcal M}^{0}(\mathcal S').
\end{equation}
We will define a topology on ${\mathcal M}(\mathcal S)$ in Section \ref{sec:topology}.
As an immediate consequence of the definition we have the following:
If an element of ${\mathcal M}^{0}(\mathcal S')$ 
is obtained as a limit of a sequence of elements of 
${\mathcal M}^{0}(\mathcal S)$, then $\mathcal S \ge \mathcal S'$.

\subsection{RGW Compactification in the General Case}
\label{subsec:RGWgeneral}

We now describe the compactifications 
$\mathcal M^{{\rm RGW}}_{k_1,k_0}(L_1,L_0;p,q;\beta)$, 
$\mathcal M^{{\rm RGW}}_{k+1}(L;\beta)$.
These compactifications are obtained 
by taking the union of fiber products of various
spaces of the forms
$\mathcal M^{0,{\rm RGW}}_{k_1,k_0}(L_1,L_0;p,q;\beta)$, 
$\mathcal M^{0,{\rm RGW}}_{k+1}(L;\beta)$. 
Those fiber products are defined using evaluation maps at the boundary 
marked points.
In the following definition we describe combinatorial objects which keep track of 
these fiber products:
\begin{definition}\label{DDdesroottree}
A {\it Disk-Divisor describing rooted ribbon tree}, or a {\it DD-ribbon tree} for short, is 
$\mathcal R = (R;\frak v^0;\mathcal S,\alpha,\le)$
with the following properties:
\begin{enumerate}
\item
$R$ is a ribbon tree.
\item
The set of vertices $C_0(R)$ is divided into disjoint union 
of two subsets $C^{\rm int}_0(R)$ and $C^{\rm ext}_0(R)$, 
the set of all {\it interior} and {\it exterior} vertices.
The valency of any exterior vertex is one.
\item
We fix one exterior vertex $\frak v^0$, which we call the {\it root 
vertex}.
We require that the number of exterior vertices to be equal to $k+1$.
We enumerate them as $\frak v^0,\dots,\frak v^{k}$
so that it respects the counter clockwise orientation,
induced by the ribbon structure.
\item
$\alpha$ is a map from $C_0^{\rm int}(R)$ to $\Pi_2(X,L;\bbZ)$ .
\item
To each interior vertex $\frak v$, we associate $\mathcal S(\frak v)$, which is a  
DD-tree of type $(\alpha(\frak v),k_{\frak v})$. 
Here $k_{\frak v}+1$ is the valency of the vertex $\frak v$.
\item
	Let $\hat S(\frak v)$ denote the detailed DD-tree associated to 
	$\mathcal S(\frak v)$. We define:
	\begin{equation}
		C^{\rm ins}_0(\hat R) = \bigcup_{{\frak v} \in C^{\rm int}_0(R)} C_0^{\rm ins}(\hat S(\frak v)).
	\end{equation}
	We call $C^{\rm ins}_0(\hat R)$ the set of inside vertices of the detailed tree associated to $\mathcal R$.
	The relation $\leq$ is a quasi order on $C^{\rm ins}_0(\hat R)$. 
	We require that the restriction of $\le$ to $C_0^{\rm ins}(\hat S(\frak v))$
	coincides with the partial order determined by the structure of $\mathcal S(\frak v)$.
\end{enumerate}
The homology class $\alpha(\mathcal R)$ of $\mathcal R$ is the sum of the homology 
classes $\alpha(\frak v)$ for all the interior vertices.
We also call $(\alpha(\mathcal R);k)$ the {\it type} of $\mathcal R$.
\end{definition}
\begin{figure}[h]
\centering
\includegraphics[scale=0.4]{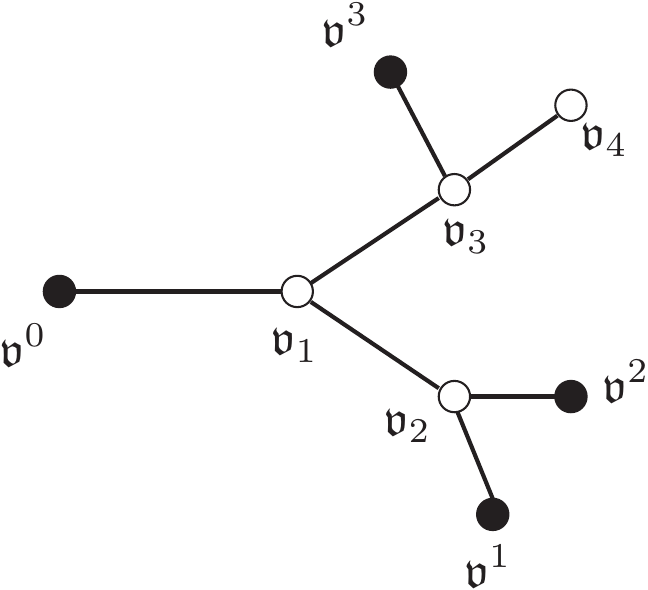}
\caption{A Disk-Divisor Describing Rooted Ribbon Tree.}
\label{Drobbontree}
\end{figure}
\begin{example}\label{exam352}
A DD-ribbon tree $R$ is given in Figure \ref{Drobbontree}.
Here black circles are exterior vertices and white circles are 
interior vertices.
The ribbon tree in Figure \ref{Drobbontree} corresponds to 
the configuration of the disks drawn in Figure \ref{treeofdisks}.
The tree $R$ has four interior verties $\frak v_1,\frak v_2,\frak v_3,\frak v_4$.
The corresponding DD-trees are given  in Figure \ref{FigofS(v)}.
\end{example}
\begin{figure}[h]
\centering
\includegraphics[scale=0.3]{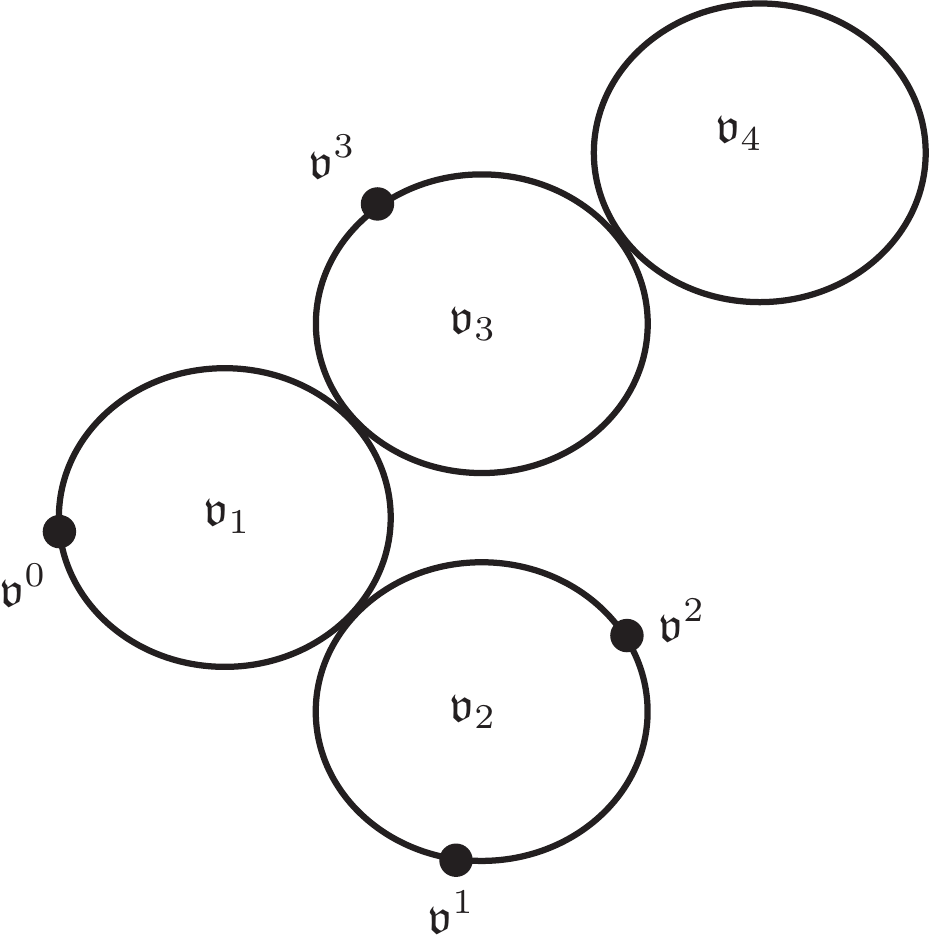}
\caption{Configuration of disks corresponding to Figure \ref{Drobbontree}.}
\label{treeofdisks}
\end{figure}
To each $\mathcal R$ as in Definition \ref{DDdesroottree}, we can associate a detailed tree $\hat R$ by forming the following disjoint union and identifying each interior vertex $\frak v$ of $R$ with the root of $\hat{\mathcal S}(\frak v)$:
$$
R \sqcup \bigcup_{\frak v \in C^{\rm int}_0(R)} \hat{\mathcal S}(\frak v).
$$
%
The detailed tree associated to Example \ref{exam352} is given in Figure \ref{FigofS(v)}.
In this figure we omit the exterior vertices of $R$ and 
the edges incident to them. Nevertheless, they are part of 
the detailed  tree. The edges of level 0 are also
drawn by dotted lines.\footnote{
It is probably more natural to draw 3-dimensional figures. However,
we content ourselves to a 2 dimension figure for simplicity.
} The level function on $\hat{\mathcal S}(\mathfrak v_i)$ induced by $\le$ is given in Figure \ref{FigofS(v)}.

\begin{figure}[h]
\centering
\includegraphics[scale=0.3]{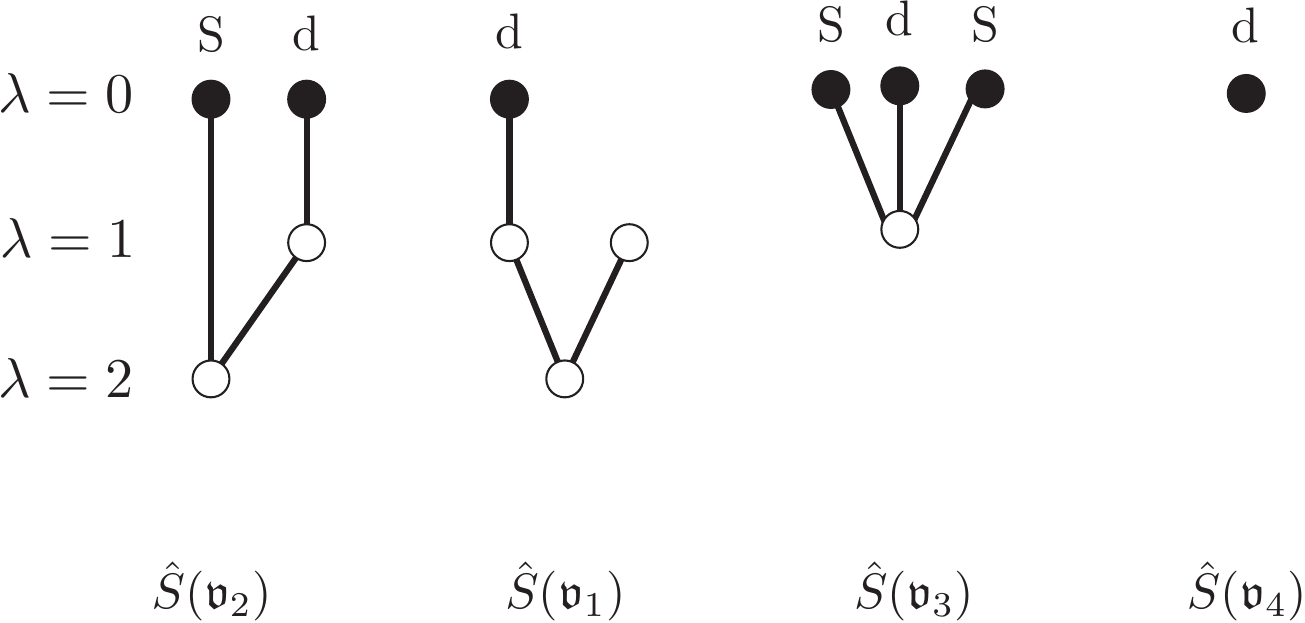}
\caption{$\hat S(\frak v_i)$.}
\label{FigofS(v)}
\end{figure}

\begin{figure}[h]
\centering
\includegraphics[scale=0.3]{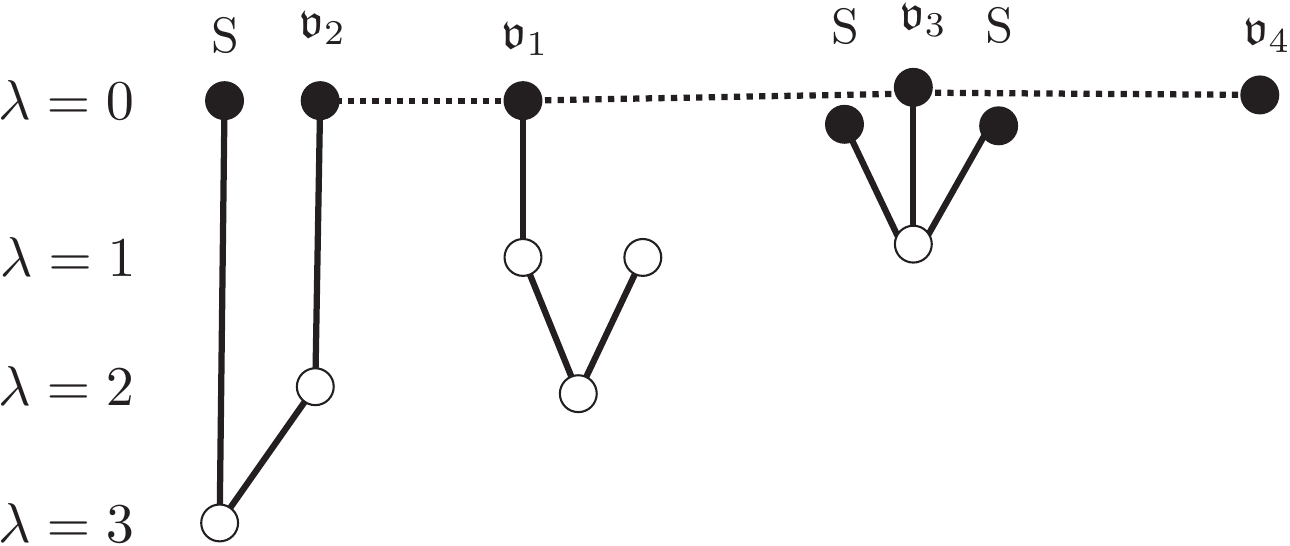}
\caption{The detailed tree $\hat R$ associated to Figures \ref{Drobbontree}
 and \ref{FigofS(v)}.}
\label{DetailedtreehatS}
\end{figure}

The detailed tree for DD-ribbon trees and 
DD-trees have the following differences:
\begin{enumerate}
	\item
	For detailed trees of DD-ribbon trees, the vertices of level 0 have one of the colors d, s or ext. 
	d stands for disks, and there is one vertex with this color for each interior vertex of $R$. 
	$\rm s$ stands for spheres, which appear in $\hat S(\frak v)$ for interior vertices $\frak v$. 
	There is a vertex with color ext for each exterior vertex of $R$.
	\item We fix a root among the vertices with color ext.
	\item There may be level 0 edges that join level 0 vertices of color d.
\end{enumerate}


\begin{definition}\label{defn350550}
Let $L_0,L_1$ be compact Lagrangian submanifolds of $X \setminus \mathcal D$ 
which intersect transversally, and $p,q \in L_0 \cap L_1$.
A {\it Strip-Divisor describing rooted ribbon tree}, or an {\it SD-ribbon tree} for short, is a 7-tuple
 $\mathcal R = (R;\frak v_l,\frak v_r;\mathcal S,{\rm pt},\alpha,\le)$
with the following properties (see Figure \ref{FiguregraphR}.):
\begin{enumerate}
\item
$R$ is a ribbon tree. 
\item
The set of vertices $C_0(R)$ is divided into disjoint union 
of two subsets $C^{\rm int}_0(R)$ and $C^{\rm ext}_0(R)$, 
the set of all {\it interior} and {\it exterior} vertices.
The valency of exterior vertices are one.
\item
$\frak v_l,\frak v_r$ are exterior vertices of $R$, which we call {\it the left most 
vertex and the right most vertex}.
There is a subgraph $C$, which is a path that starts from $\frak v_l$ and ends at $\frak v_r$.
We call a vertex and an edge  of $C$ a {\it strip vertex} and a {\it strip edge}.
We do not regard $\frak v_l$ or $\frak v_r$ as a strip vertex.
In particular, all the strip vertices are interior.
We require $C$ contains at least one strip vertex.
\item
The complement $R \setminus C$ is split into $R_0$ and $R_1$,
as follows. We orient $C$ so that it starts from $\frak v_{l}$
and ends with $\frak v_r$. Then $R_0$ lies to the right and $R_1$ lies 
to the left of $C$. (We use the embedding of $R$ in $\bbR^2$ associated to its 
ribbon structure here.)
The vertices or edges in $R_0$ (resp. $R_1$)  are called ${\rm d_0}$-type (resp. ${\rm d_1}$-type) 
vertices or edges.
The graph $R_0$ (resp. $R_1$) has exactly $k_0$ (resp. $k_1$) 
exterior edges.
\item
${\rm pt}$ assigns to each  strip edge $\frak e$ an element of $L_0\cap L_1$.
If $\frak e$ contains $\mathfrak v_{l}$ then ${\rm pt}(\frak e) = p$
if $\frak e$ contains $\mathfrak v_r$ then ${\rm pt}(\frak e) = q$.
\item
If $\frak v$ is a strip vertex then $\alpha(\frak v) \in \Pi_2(L_1,L_0;
{\rm pt}(\frak e_l(\frak v)),{\rm pt}(\frak e_r(\frak v))$.
Here $\frak e_l(\frak v)$ (resp. $\frak e_r(\frak v)$)
is the edge of $C$ containing $\frak v$ 
which lies in the same connected component of $C \setminus \{\frak v\}$ 
as $\frak v_{l}$ (resp.  $\frak v_r$). 
\item
If $\frak v$ is an interior ${\rm d_0}$ vertex (resp.  an interior ${\rm d_1}$ vertex) then 
$\alpha(\frak v) \in \Pi_2(X,L_0;\bbZ)$ (resp. $\alpha(\frak v) \in \Pi_2(X,L_1;\bbZ)$).
\item
If $\frak v$ is a strip vertex, then $\mathcal S(\frak v)$ is an SD-tree of type $(\alpha(\frak v);k_0,k_1)$ where $k_i$ is the number of edges in $R_i$ incident to $\frak v$.  If $\frak v$ is a ${\rm d_0}$ vertex then $\mathcal S(\frak v)$ is a 
DD-tree for the Lagrangian $L_0$ of type $(\alpha(\frak v);k)$ where $k+1$ is the number of edges incident to $\frak v$.
If $\frak v$ is a ${\rm d_1}$ vertex then $\mathcal S(\frak v)$ is a 
DD-tree for the Lagrangian $L_1$ of type $(\alpha(\frak v);k)$ where $k+1$ is the number of edges incident to $\frak v$.
\item
$\le$ is a quasi partial order on
\begin{equation}
C^{\rm ins}_0(\hat R) = 
\bigcup_{{\frak v} \in C^{\rm int}_0(R)} C_0^{\rm ins}(\hat S(\frak v)).
\end{equation}
The set $C^{\rm ins}_0(\hat R)$  is called the set of inside vertices of the detailed tree associated to $R$.
$\le$ is a quasi partial order on $C^{\rm ins}_0(\hat R)$.
The restriction of $\le$ to $C_0^{\rm ins}(\hat S(\frak v))$ 
coincides with the quasi order coming from the structure of $\mathcal S(\frak v)$.
\end{enumerate}
The homology class $\alpha(\mathcal R)$ of $\mathcal R$ is the sum of the homology 
classes $\alpha(\frak v)$ for all interior vertices $\frak v$.
We say the {\it type} of $\mathcal R$ is $(p,q;\alpha(\mathcal R);k_0,k_1)$.
\end{definition}

\begin{figure}[h]
\centering
\includegraphics[scale=0.4]{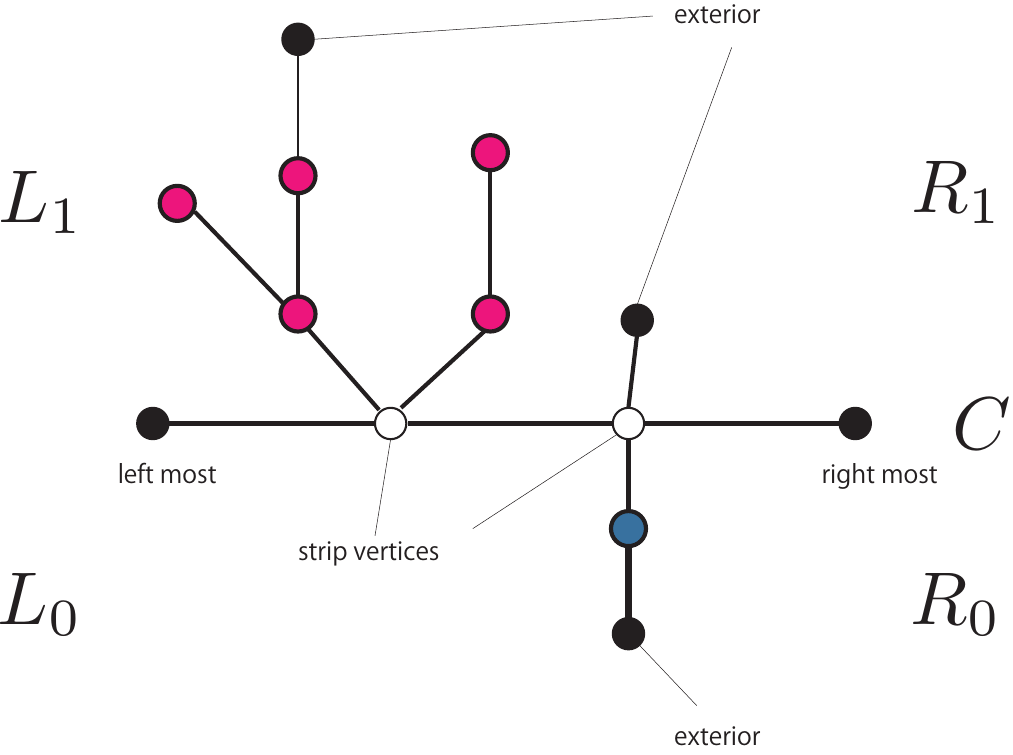}
\caption{Strip-Divisor describing ribbon tree.}
\label{FiguregraphR}
\end{figure}

\begin{figure}[h]
\centering
\includegraphics[scale=0.4]{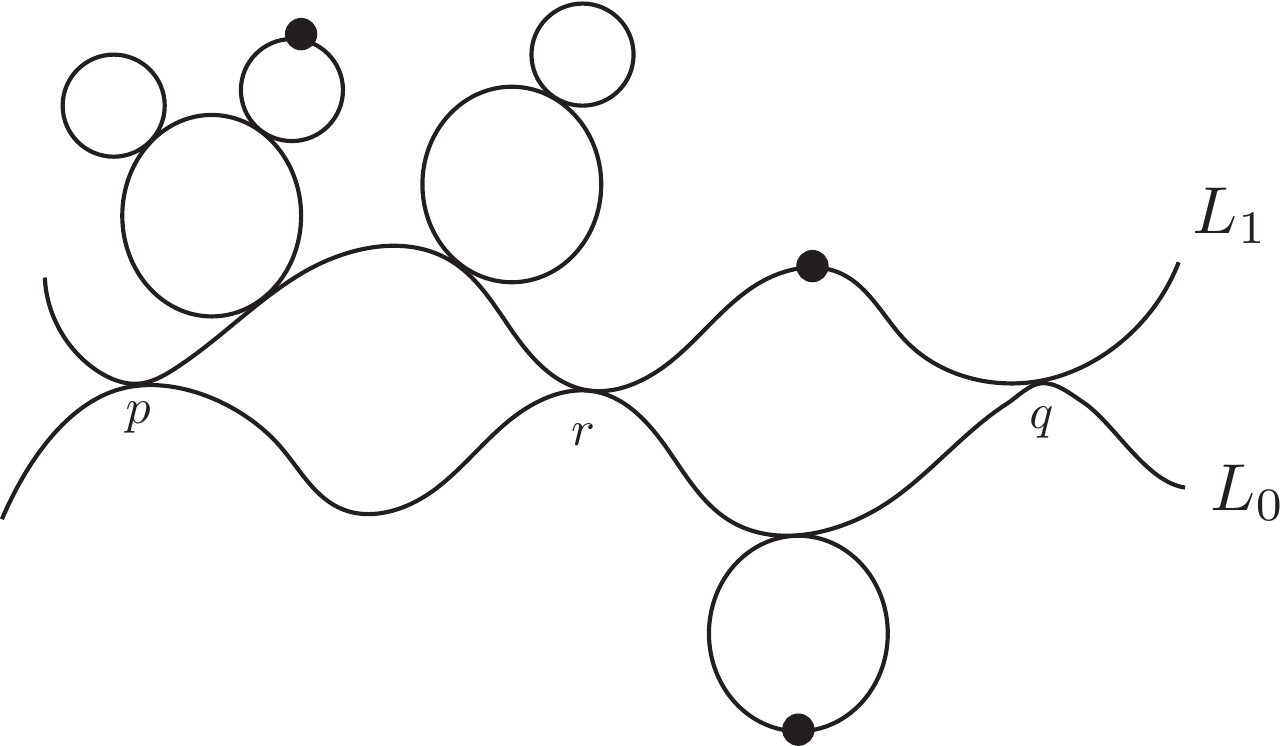}
\caption{Configuration corresponding to Figure \ref{FiguregraphR}.}
\label{Stripdiskconf}
\end{figure}

We can define the notion of the detailed tree $\hat R$ associated to $\mathcal R$
in the same way as in the case of DD-ribbon trees.
We omit the details here.
The main differences are:

\begin{enumerate}
\item
The vertices of level 0 have one of the colors ${\rm d_0}$, ${\rm d_1}$, str, le, ri, s, mk0, mk1. 
\item ${\rm d_0}$ (resp. ${\rm d_1}$) stands for disks with boundary condition $L_0$ (resp. $L_1$). The vertices with color ${\rm d_0}$ (resp. ${\rm d_1}$)  correspond to the interior vertices of $R_0$ (resp. $R_1$).
\item
mk0 (resp. mk1) labels  the exterior vertices of $R_0$ (resp. $R_1$) 
and corresponds to the $k_0$ (resp. $k_1$) boundary marked points in $L_0$ (resp. $L_1$). The vertices with color str 
correspond to the interior vertices $\frak v$ of $C$.
\item
le, ri correspond to the left most and the right most vertices of $C$.
\end{enumerate}
Now we are ready to describe the moduli spaces associated to the 
DD- or SD- ribbon trees. We first define evaluation maps 
at boundary marked points: 
$$
{\rm ev}^{\partial}_j : \mathcal M_{k+1}^{\rm reg, d}(\beta;{\bf m})
\to L
$$
for $j=0,\dots,k$  by
\begin{equation}
{\rm ev}^{\partial}_j((\Sigma,\vec z,\vec w),u) = u(z_i)
\end{equation}
and
$$
{\rm ev}^{\partial}_{i,j} : \mathcal M_{k_1,k_0}^{\rm reg}(L_1,L_0;p,q;\beta;{\bf m})
\to L_i
$$
for $i=0,1$ and $j=1,\dots,k_i$ by:
\begin{equation}
{\rm ev}^{\partial}_{i,j}((\Sigma,\vec z_0,\vec z_1,\vec w),u)
= u(z_{i,j}).
\end{equation}
\begin{definition}
Let $\mathcal R = (R;\frak v_0;\mathcal S,\alpha,\le)$
be a DD-ribbon tree, $\hat R$  the associated detailed tree and $\lambda$  the level function of $\hat R$.
	For an interior vertex $\frak v$ of $R$, let $v \in C^{\rm int}_0(\mathcal S(\frak v))$. We then define:
\begin{equation}
\widetilde{\mathcal M}^0(\mathcal R;v) = \widetilde{\mathcal M}^0(\mathcal S(v))
\end{equation}
where the right hand side is defined as in 
(\ref{form335}), (\ref{form336}) or (\ref{fpr315315rev}).
\par
Let $C^{{\rm int},\lambda=0}_1(\hat R)$ be the set of all 
interior edges of $\hat R$ of level $0$ 
and $C^{\lambda>0}_1(\hat R)$ be the set of all edges with positive level.
We define:
$$
{\rm EV} : \prod_{v \in C^{\rm ins}_0(\hat R)}\widetilde{\mathcal M}^{0}(\mathcal R;v)
\to \prod_{e \in C^{\lambda>0}_1(\hat R)}(\mathcal D \times \mathcal D)
\times \prod_{e \in C^{{\rm int},\lambda=0}_1(\hat R)}(L \times L)
$$
as follows.
Let $e \in C^{\lambda>0}_1(\hat R)$. Then there exists a unique $\frak v \in C^{\rm int}_0(R)$
such that $e \in C_1(\mathcal S(\frak v))$.
We define the $e$-component of $\prod_{e \in C^{\lambda>0}_1(\hat R)}(\mathcal D \times \mathcal D)$
by (\ref{form338}).
Let $e \in C^{{\rm int},\lambda=0}_1(\hat R)$. We define $s(e), t(e) \in C^{\rm int}_0(R)$
the vertices incident to $e$ such that $s(e)$ is in the 
same connected component  of $R \setminus \{ e\}$ as the root.
We label the edges of $\frak v = t(e)$ as $e_0(\frak v),\dots,e_{k_v}(\frak v)$ 
such that $t(e_0(\frak v)) = \frak v$. 
(See Figure \ref{Figuret(e)v(e)} below.)
\begin{figure}[h]
\centering
\includegraphics[scale=0.4]{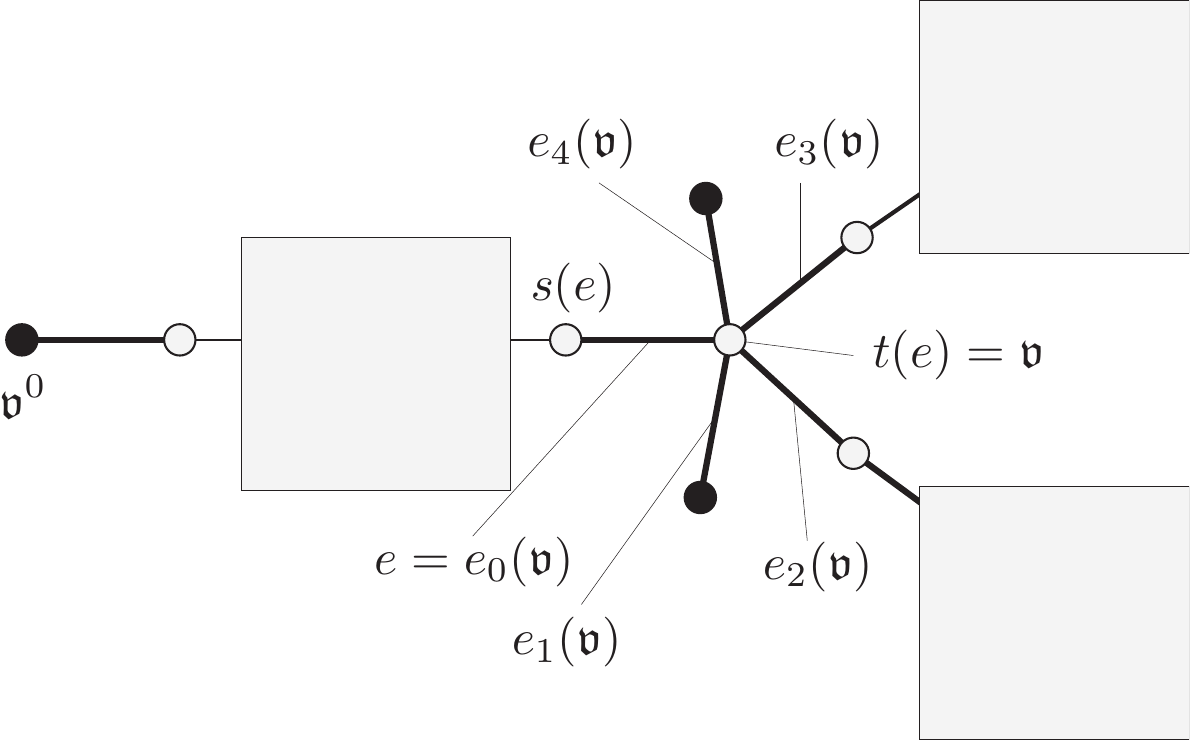}
\caption{$e_i(\frak v)$.}
\label{Figuret(e)v(e)}
\end{figure}
Suppose $e$ is the $k_e$-th edge of $s(e)$.
Now we define the $e$-component of 
$\prod_{e \in C^{\lambda=0}_1(\hat R)}(L \times L)$ by
$$
{\rm EV}_e({\bf x}_v;v \in C^{\rm ins}_0(\hat R) ) = ({\rm ev}^{\partial}_0({\bf x}_{t(e)}),{\rm ev}^{\partial}_{k_e}({\bf x}_{s(e)})).
$$
We now define:
\begin{equation}\label{fpro351}
	\widetilde{\mathcal M}^{0}(\mathcal R) 
	= \prod_{v \in C^{\rm ins}_0(\hat R)}\widetilde{\mathcal M}^{0}(\mathcal R;v) \hspace{3cm}
\end{equation}
\begin{equation*}
	\hspace{3cm}\, _{\rm EV}\times_{\star}
	\left(\left(\prod_{e \in C^{\lambda>0}_1(\hat R)} \Delta_{\mathcal D}\right)  \times
	\left(\prod_{e \in C^{{\rm int},\lambda=0}_1(\hat R)}\Delta_{L}\right)\right),
\end{equation*}
where $\star$ is the inclusion map:
$$
\aligned
&\left(\prod_{e \in C^{\lambda>0}_1(\hat R)} \Delta_{\mathcal D}\right)  \times
\left(\prod_{e \in C^{{\rm int},\lambda=0}_1(\hat R)}\Delta_{L}\right) \\
&\to 
\prod_{e \in C^{\lambda>0}_1(\hat R)}(\mathcal D \times \mathcal D)
\times \prod_{e \in C^{{\rm int},\lambda=0}_1(\hat R)}(L \times L).
\endaligned
$$
Let $\vert \lambda\vert$ be the total number of positive levels,
namely, the image of $\lambda$ is $\{0,1,\dots,\vert \lambda\vert\}$.
We define an action of $\bbC_{*}^{\vert \lambda\vert}$  on $\widetilde{\mathcal M}^{0}(\mathcal R)$
by the same formula as (\ref{form33940}).
(This action is trivial on the components ${\bf x}_v$  with $\lambda(v) = 0$.)
\par
We define 
\begin{equation}\label{form352minus}
\widehat{\mathcal M}^{0}(\mathcal R) = 
\widetilde{\mathcal M}^{0}(\mathcal R)/\bbC_{*}^{\vert \lambda\vert}
\end{equation}
The group of automorphisms 
${\rm Aut}(\mathcal R)$ is the 
direct product 
$$
\prod_{\frak v \in C_0^{\rm int}(R)} {\rm Aut}(\mathcal S(\frak v)).
$$
We also define:
\begin{equation}\label{form352}
{\mathcal M}^{0}(\mathcal R)
=\widehat{\mathcal M}^{0}(\mathcal R)/{\rm Aut}(\mathcal R).
\end{equation}
The RGW compactification $\mathcal M^{\rm RGW}_{k+1}(L;\beta)$ 
of $\mathcal M_{k+1}^{\rm reg}(L;\beta)$, as a set, is defined to be:
\begin{equation}\label{form3531}
\mathcal M^{\rm RGW}_{k+1}(L;\beta)
= \bigcup_{\mathcal R}{\mathcal M}^{0}(\mathcal R)
\end{equation}
where the disjoint union in the right hand side is taken over all
DD-ribbon trees $\mathcal R$ of type $(\beta,k+1)$.
\end{definition}
The $(i,i+1)$ level shrinking of a DD-ribbon tree 
is defined as in Definition \ref{defn326levelsh} for $i>0$
and  as
in Definition \ref{01levelshrink} for $i=0$.
We say $\mathcal R'$ is obtained from $\mathcal R$ by level shrinking and write 
$\mathcal R' >' \mathcal R$ if $\mathcal R'$ is obtained from $\mathcal R$
by finitely many iterations of the level shrinking operations.
\par
We also need shrinkings of level $0$ edges.
\begin{definition}\label{defn3606060}
Let $\hat R$ be the detailed tree associated to a DD-ribbon tree $\mathcal R$ and $e$ be an interior 
level $0$ edge.
We remove the edge $e$ and identify its two vertices $v'$, $v''$ 
to obtain $v$.
All the edges other than $e$ which contains one of $v'$ or $v''$
will be incident to the new vertex. 
The homology class of $v$ is  v
$\beta(v) = \beta(v') + \beta(v'')$.
We obtain a detailed tree associated to a new DD-ribbon tree of the same type.
We say $\mathcal R' >'' \mathcal R$ if $\mathcal R'$ 
is obtained from $\mathcal R$ by applying the above process 
finitely many times. We also say $\mathcal R'$ is obtained 
from $\mathcal R$ by {\it level 0 edge shrinkings}.
\par
We write $\mathcal R' > \mathcal R$ if $\mathcal R'$ 
is obtained from $\mathcal R$ by finitely many iterations of level shrinkings and 
level $0$ edge shrinkings.
\end{definition}

We sketch some of the basic properties of the compactification in \eqref{form3531} which will be proved in the rest of this paper and the sequels. In Section \ref{sec:topology}, we define a topology on $\mathcal M^{\rm RGW}_{k+1}(L;\beta)$, called the RGW topology, that is compact and metrizable. Moreover, for any RD-ribbon tree $\mathcal R$, the space 
\begin{equation*}
	{\mathcal M}(\mathcal R) := {\mathcal M}^{0}(\mathcal R) \cup\bigcup_{\mathcal R' < \mathcal R}{\mathcal M}^{0}(\mathcal R').
\end{equation*}
is a closed subset of $\mathcal M^{\rm RGW}_{k+1}(L;\beta)$. In \cite{part2:kura}, we define a Kuranishi structure on $\mathcal M^{\rm RGW}_{k+1}(L;\beta)$ with corners such that the underlying subset of the codimension $n$ corner is the union of all moduli spaces ${\mathcal M}^{0}(\mathcal R)$, where $R$ has at least $n+1$ interior vertices. A more detailed description of the boundary, which is important for our purposes, will be given in \cite[Subsection 2.2]{part3:FH}.
%
%
%

Next, we define the moduli space associated to an  SD-ribbon tree.

\begin{definition}
	Let $\mathcal R = (R;\frak v_l,\frak v_r;\mathcal S,{\rm pt},\alpha,\le)$ be an SD-ribbon tree, 
	$\hat R$ be the associated detailed tree
	and $\lambda$ be the level function of $\mathcal R$.
	For an interior vertex $\frak v$ of $R$, let $v \in C^{\rm int}_0(\mathcal S(\frak v))$.
	We define:
	\begin{equation}
		\widetilde{\mathcal M}^0(\mathcal R;v) = \widetilde{\mathcal M}^0(\mathcal S(\frak v);v)
	\end{equation}
	where the right hand side is as in 
	\eqref{form335}, \eqref{form336}, \eqref{fpr315315rev} or \eqref{formform343343}.

        Let $C^{{\rm int},\lambda=0}_1(R_i)$ be the set of all 
        interior edges of $R_i$ of level $0$ that are not strip edges ($i=0,1$), 
        and $C^{\lambda>0}_1(\hat R)$ be the set of the edges of level $>0$.
        We define
        $$
        \aligned
        {\rm EV} : &\prod_{v \in C^{\rm ins}_0(\hat R)}\widetilde{\mathcal M}^{0}(\mathcal R;v)
        \\
        &\to \prod_{e \in C^{\lambda>0}_1(\hat R)}(\mathcal D \times \mathcal D)
        \times \prod_{e \in C^{{\rm int},\lambda=0}_1(R_0)}(L_0 \times L_0)
        \times \prod_{e \in C^{{\rm int},\lambda=0}_1(R_1)}(L_1 \times L_1)
        \endaligned
        $$
        as follows.
        Let $e \in C^{\lambda>0}_1(\hat R)$. Then there exists a unique $\frak v \in C^{\rm ins}_0(R)$
        such that $e \in C_1(\mathcal S(\frak v))$.
        We define the $e$-component of $\prod_{e \in C^{\lambda>0}_1(\hat S)}(\mathcal D \times \mathcal D)$
        by (\ref{form338}).
        \par
        Let $e \in C^{{\rm int},\lambda=0}_1(R_1)$ and $s(e), t(e) \in C^{\rm int}_0(R)$ be
        the vertices incident to $e$, such that $s(e)$ is in the 
        same connected component of $R \setminus \{e\}$ as $C$.
        We enumerate the edges of $\frak v' = t(e)$ as $e_0(\frak v'),\dots,e_{k_{\frak v'}}(\frak v')$ 
        such that $s(e_0(\frak v'))$ is $ \frak v=s(e)$. 
        (See Figure \ref{Figure3551} below.)
        
        \begin{figure}[h]
        \centering
        \includegraphics[scale=0.4]{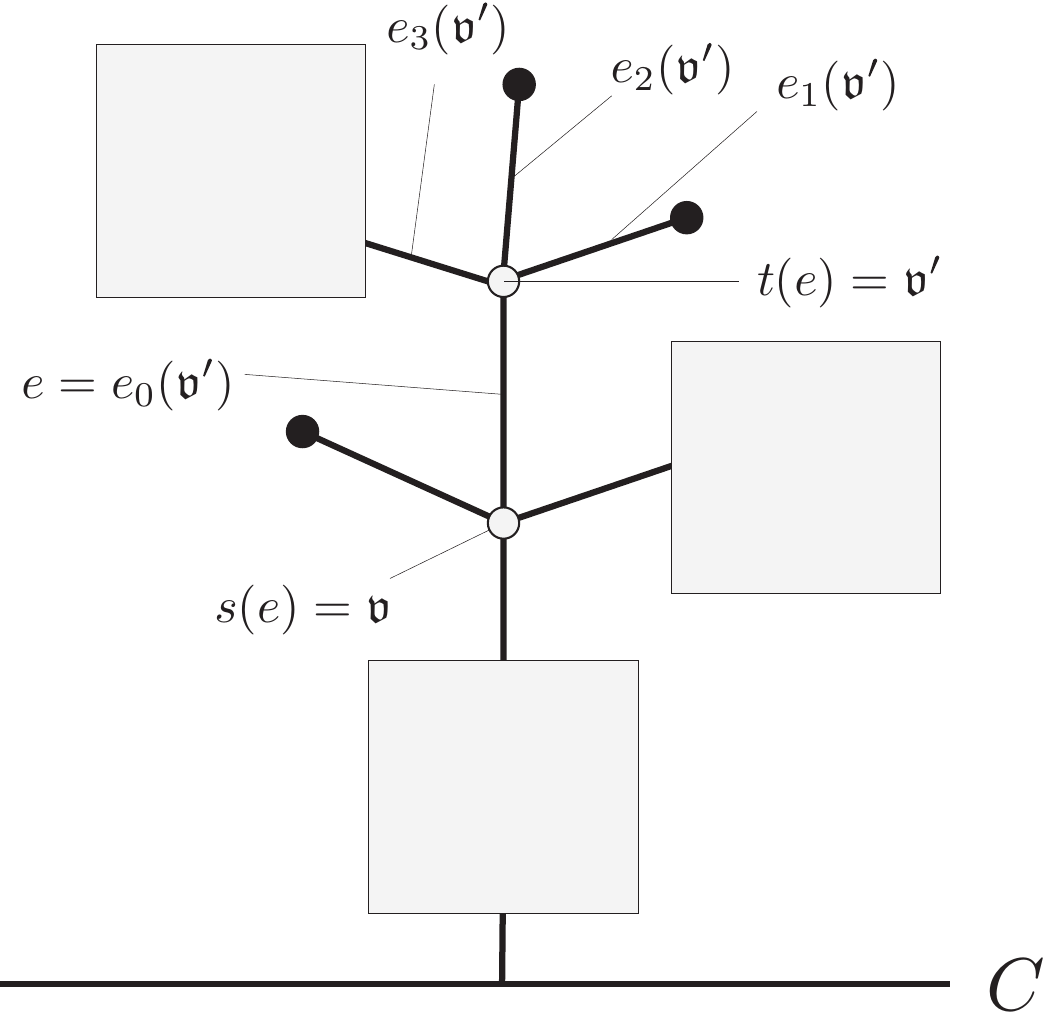}
        \caption{$e_i(\frak v')$.}
        \label{Figure3551}
        \end{figure}
        
        If $s(e)\in R_1$, we enumerate the edges incident to $e$ using the ribbon structure such that 
        the first edge of $s(e)$ is labeled by $0$.
        Suppose $e$ is the $k_e$-th edge of $s(e)$.
        Now we define the $e$-th component of  the product space 
        $\prod_{e \in C^{{\rm int},\lambda=0}_1(R_1)}(L_1 \times L_1)$ by
        $$
        {\rm EV}_e({\bf x}_v;v \in C^{{\rm int},\lambda=0}_0(\hat R) ) = 
        ({\rm ev}^{\partial}_0({\bf x}_{t(e)}),{\rm ev}^{\partial}_{k_e}({\bf x}_{s(e)})).
        $$
        If $s(e) \in C$, then we enumerate the edges of $s(e)$ in $R_1$ in the 
        counter clockwise order. Let $e$ be the $k_e$-th edge among them.
        (See Figure \ref{Figure35522} below).
        We then define $e$-th component of 
        $\prod_{e \in C^{\rm int,\lambda=0}_1(R_1)}(L_1 \times L_1)$ by
        $$
        {\rm EV}_e({\bf x}_v;v \in C^{{\rm int},\lambda=0}_0(\hat R) ) = 
        ({\rm ev}^{\partial}_0({\bf x}_{t(e)}),{\rm ev}^{\partial}_{1,k_e}({\bf x}_{s(e)})).
        $$
        The case $e \in C^{{\rm int},\lambda=0}_1(R_0)$ can be defined in the same way.        
        \begin{figure}[h]
        \centering
        \includegraphics[scale=0.4]{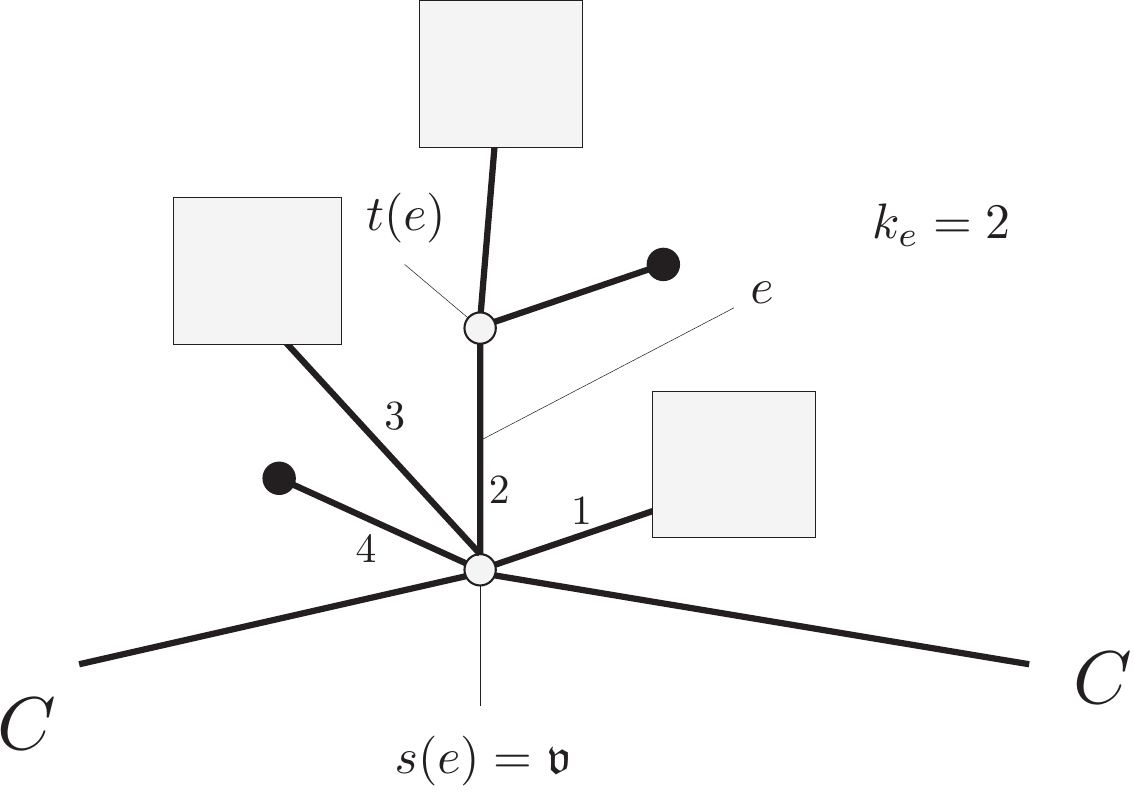}
        \caption{$k_e$.}
        \label{Figure35522}
        \end{figure}

        \par
        We define:
        \begin{equation}\label{form353}
        \widetilde{\mathcal M}^{0}(\mathcal R) 
        = \prod_{v \in C^{\rm ins}_0(\hat R)}\widetilde{\mathcal M}^{0}(\mathcal R;v) \, _{\rm EV}\times_{\star}\hspace{5cm}
        \end{equation}
        \begin{equation*}
	\hspace{1cm}
	\left(\left(\prod_{e \in C^{\lambda>0}_1(\hat R)} \Delta\right)  \times
        \left(\prod_{e \in C^{{\rm int},\lambda=0}_1(R_1)}\Delta\right)
        \times 
        \left(\prod_{e \in C^{{\rm int},\lambda=0}_1(R_0)}\Delta\right)\right),
        \end{equation*}
        where
        $\star$ is the inclusion map.
        Let $\vert \lambda\vert$ be the total number of positive levels.
        We define an action of $\bbC_{*}^{\vert \lambda\vert}$  on $\widetilde{\mathcal M}^{0}(\mathcal R)$
        by the same formula as (\ref{form33940}).
        We define 
        $$
        \widehat{\mathcal M}^{0}(\mathcal R) = 
        \widetilde{\mathcal M}^{0}(\mathcal R)/\bbC_{*}^{\vert \lambda\vert}
        $$
        The group of automorphisms 
        ${\rm Aut}(\mathcal R)$ is  given as the 
        direct product 
        $$
        \prod_{\frak v \in C_0^{\rm int}(R)} {\rm Aut}(\mathcal S(\frak v)).
        $$
        We define:
        $$
        {\mathcal M}^{0}(\mathcal R)
        =\widehat{\mathcal M}^{0}(\mathcal R)/{\rm Aut}(\mathcal R).
        $$

        Now the RGW compactification $\mathcal M^{\rm RGW}_{k_1,k_0}(L_1,L_0;p,q;\beta)$ 
        of  
        $\mathcal M_{k_1,k_0}^{\rm reg}(L_1,L_0;p,q;\beta;\emptyset)$ is defined as
        \begin{equation}\label{form3531-strip}
        \mathcal M^{\rm RGW}_{k_1,k_0}(L_1,L_0;p,q;\beta)
        = \bigcup_{\mathcal R}{\mathcal M}^{0}(\mathcal R)
        \end{equation}
        where the disjoint union in the right hand side is taken over all
        RD-ribbon trees $\mathcal R$ of type $(p,q;\beta;k_0,k_1)$.
\end{definition}
We define the notion of level shrinking and level $0$ edge shrinking for SD-ribbon trees in the same way as in the case of DD-ribbon trees and write $\mathcal R' <\mathcal R$, if $\mathcal R$ is obtained from $\mathcal R'$ by a composition of finitely many iterations of these two operations.

As in the disc case, we list some of the basic properties of the compactification in \eqref{form3531-strip} which will be proved later. In Section \ref{sec:topology}, we define a topology on the set $\mathcal M^{\rm RGW}_{k_1,k_0}(L_1,L_0;p,q;\beta)$, called the RGW topology, that is compact and metrizable. For any RD-ribbon tree $\mathcal R$, the space 
\begin{equation*}
	{\mathcal M}(\mathcal R) := {\mathcal M}^{0}(\mathcal R) \cup\bigcup_{\mathcal R' < \mathcal R}{\mathcal M}^{0}(\mathcal R').
\end{equation*}
is a closed subset of $\mathcal M^{\rm RGW}_{k_1,k_0}(L_1,L_0;p,q;\beta)$. In \cite{part2:kura}, we define a Kuranishi structure on $\mathcal M^{\rm RGW}_{k_1,k_0}(L_1,L_0;p,q;\beta)$ with corners such that the underlying subset of the codimension $n$ corner is the union of all moduli spaces ${\mathcal M}^{0}(\mathcal R)$, where $R$ has at least $n+1$ interior vertices. A more detailed description of the boundary, which is important for our purposes, will be given in \cite[Subsection 2.2]{part3:FH}.

\section{Stable Map Topology and RGW Topology}
\label{sec:topology}

\subsection{Review of Stable Map Topology}\label{stablemaptopology}
 
We first review the definition of stable map topology, which plays an essential role in the definition of the RGW topology. The idea of compactifying moduli spaces of pseudo-holomorphic curves goes back to the groundbreaking work of Gromov in \cite{G:p-holo-symp}. It was pointed out in \cite{K:stable-map} that the notion of stable maps, which is an adaptation of the notion of stable curves due to Mumford, provides a suitable compactification of the moduli space of pseudo-holomorphic curves. The definition of stable map topology in the form that we use in this paper was introduced in \cite[Definition 10.3]{FO}.

Let $\mathcal M^{{\rm d}}_{k+1,\ell}$ be the compactified moduli space of disks with $k+1$ boundary marked points and $\ell$ interior marked points.  This space is the compactification of the space $\mathcal M^{0, d}_{k+1,\ell} = \{(D^2,\vec z,\vec z^+)\}/\sim$. Here $\vec z = (z_0,\dots,z_k)$, $\vec z^+ = (z^+_1,\dots,z^+_{\ell})$ are distinct points such that $z_i \in \partial D^2$, $z_i^+ \in {\rm Int} D^2$, and $(z_0,\dots,z_k)$ respects the counter clockwise orientation of $\partial D^2$. The equivalence relation $\sim$ is defined by the action of $PSL(2,\bbR) = {\rm Aut}(D^2)$. An element of the compactification $\mathcal M^{\rm d}_{k+1,\ell}$ is an equivalence class of $(\Sigma,\vec z,\vec z^+)$, where $\Sigma$ is a tree like union of disks with double points plus trees of sphere components attached to  interior points of the disks. The tuples $\vec z = (z_0,\dots,z_k)$ and $\vec z^+ = (z^+_1,\dots,z^+_{\ell})$ are respectively boundary and interior marked points. The object $(\Sigma,\vec z,\vec z^+)$ representing an element of $\mathcal M^{\rm d}_{k+1,\ell}$ is also required to satisfy a stability condition. See \cite[Definition 2.1.18]{fooobook} for more details. Hereafter with a slight abuse of notation, we say $(\Sigma,\vec z,\vec z^+)$ is an element of $\mathcal M^{\rm d}_{k+1,\ell}$. Note that the symmetric group $S_\ell$ of order $\ell!$ acts on $\mathcal M^{\rm d}_{k+1,\ell}$ by exchanging the order of interior marked points.

The moduli space $\mathcal M^{\rm d}_{k+1,\ell}$ has the structure of a smooth manifold with boundary and corner. Its codimension $m$ corner consists of the elements $(\Sigma,\vec z,\vec z^+)$ such that $\Sigma$ has at least $m+1$ disk components. (See \cite[Theorem 7.1.44]{fooobook}.)

For any pair of injective and order preserving maps $\frak i_{k,k'} : \{1,\dots,k\} \to \{1,\dots,k'\}$ and $\frak i^+_{\ell,\ell'} : \{1,\dots,\ell\} \to \{1,\dots,\ell'\}$, we define the forgetful map:
\begin{equation}\label{forget5151}
	\frak{fg}_{\frak i_{k,k'},\frak i^+_{\ell,\ell'}} : \mathcal M^{\rm d}_{k'+1,\ell'} \to \mathcal M^{\rm d}_{k+1,\ell}
\end{equation}
as follows. Let $(\Sigma',\vec z^{\,\prime},\vec z^{+ \prime}) \in \mathcal M_{k'+1,\ell'}^{\rm d}$. Define $\vec z = (z_0,z_{\frak i_{k,k'}(1)},\dots,\frak i_{k,k'}(k))$ and $\vec z = (z^+_{\frak i^+_{\ell,\ell'}(1)},\dots,z^+_{\frak i^+_{\ell,\ell'}(\ell)})$. The triple $(\Sigma',\vec z,\vec z^{+})$ may not represent an element of $\mathcal M^{\rm d}_{k+1,\ell}$ if it does not satisfy the stability condition. By shrinking all unstable components of $(\Sigma',\vec z,\vec z^{+})$, we obtain $(\Sigma,\vec z,\vec z^{+})$, which satisfies the stability condition. This element  $(\Sigma,\vec z,\vec z^{+})$ represents $\frak{fg}_{\frak i_{k,k'},\frak i^+_{k,k'}}(\Sigma',\vec z^{\,\prime},\vec z^{+ \prime})$. See \cite[page 419]{fooobook2} for more details.

If $\frak i_{k,k'}$ or $\frak i^+_{\ell,\ell'}$ 
is the identity map, we omit it from the notation 
$\frak{fg}_{\frak i_{k,k'},\frak i^+_{\ell,\ell'}}$.
If $\frak i_{k,k'}$ is the identity map and $\frak i^+_{\ell,\ell'}(i) = i$, for $i=1,\dots,\ell$, then 
we write $\frak{fg}_{\ell',\ell}$
instead of 
$\frak{fg}_{\frak i^+_{\ell,\ell'}}$.

We consider the map
\begin{equation}\label{map52new}
	\frak{fg}_{\ell+1,\ell} : \mathcal M^{\rm d}_{k+1,\ell+1}\to \mathcal M^{\rm d}_{k+1,\ell}.
\end{equation}
As proved in \cite[Lemma 7.1.45]{fooobook2}, the fiber $(\frak{fg}_{\ell+1,\ell})^{-1}(\Sigma,\vec z,\vec z^+)$ is diffeomorphic to $\widetilde{\Sigma}$ where $\widetilde{\Sigma}$ is obtained from $\Sigma$ by replacing each boundary node by an interval. (See Figure \ref{Figure5-1}.)
\begin{figure}[h]
        \centering
        \includegraphics[scale=0.4]{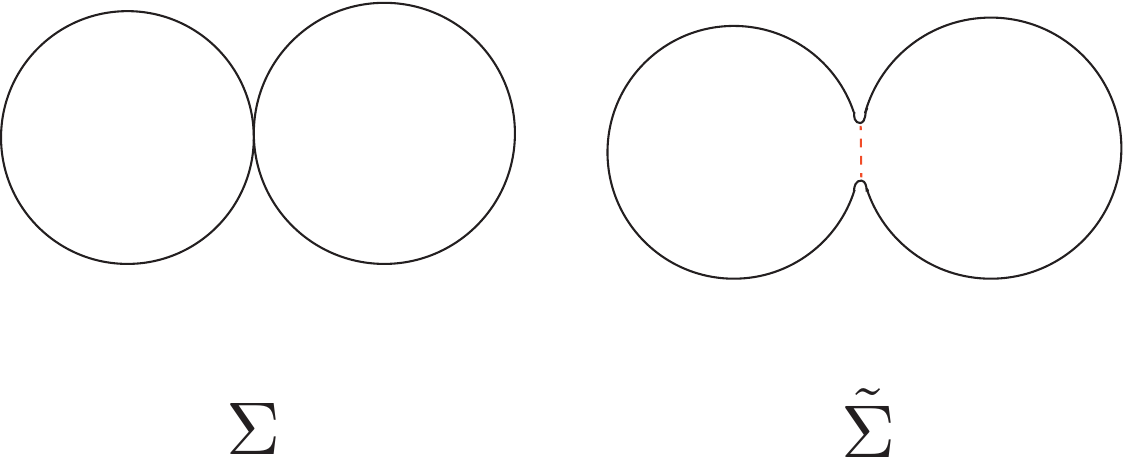}
        \caption{$\widetilde{\Sigma}$}
        \label{Figure5-1}
\end{figure}
The space $\mathcal C_{k+1,\ell}$ is given by shrinking all such intervals to a point. In other words, there exists:
\begin{equation}\label{map52}
	\pi : \mathcal C^{\rm d}_{k+1,\ell}\to \mathcal M^{\rm d}_{k+1,\ell}
\end{equation}
such that the fiber over $(\Sigma,\vec z,\vec z^+)$ is identified with $\Sigma$. Note that $\mathcal C^{\rm d}_{k+1,\ell}$ is merely a topological space and does not carry the structure of a manifold or an orbifold. Let $\mathcal{SC}^{\rm d}_{k+1,\ell}$ be the subset of $\mathcal C^{\rm d}_{k+1,\ell}$ consisting of $x \in \mathcal C^{\rm d}_{k+1,\ell}$ such that if $\pi(x) = (\Sigma,\vec z,\vec z^+)$, then $x$ corresponds to a boundary or interior node of $\Sigma$.

\begin{lemma}\label{lem51}
	The subspace  $\mathcal C^{\rm d}_{k+1,\ell} \setminus \mathcal{SC}^{\rm d}_{k+1,\ell}$ has the structure of a smooth 
	manifold with corners. Moreover, the restriction of \eqref{map52} to $\mathcal C^{\rm d}_{k+1,\ell} \setminus \mathcal{SC}^{\rm d}_{k+1,\ell}$
	is a smooth submersion.
\end{lemma}
\begin{proof}
        By construction $\mathcal C^{\rm d}_{k+1,\ell} \setminus 
        \mathcal{SC}^{\rm d}_{k+1,\ell}$ is an open 
        subset of $\mathcal M^{\rm d}_{k+1,\ell+1}$. 
        This verifies the first part.
        The second part also follows from the 
        corresponding results about $\mathcal M^{\rm d}_{k+1,\ell+1}$,
        which is a consequence of a similar result about the moduli space of marked spheres.
        (The later is classical. See, for example, \cite{alcorGri}.)
\end{proof}
The symmetry group $S_{\ell}$ of order $\ell !$ acts on $\mathcal C^{\rm d}_{k+1,\ell}$ and $\mathcal M^{\rm d}_{k+1,\ell}$ such that \eqref{map52new}, \eqref{map52} are $S_{\ell}$-equivariant. Moreover, the $S_{\ell}$ action on  $\mathcal C^{\rm d}_{k+1,\ell} \setminus \mathcal{SC}^{\rm d}_{k+1,\ell}$ is smooth.

The spaces $\mathcal{C}^{\rm d}_{k+1,\ell}$, $\mathcal M^{\rm d}_{k+1,\ell}$ are 
all metrizable. We fix metrics on them and use these metrics throughout 
the paper. (The whole construction is independent of the
choice of metrics.)
\par
Let $\zeta = (\Sigma,\vec z,\vec z^+) \in \mathcal M^{\rm d}_{k+1,\ell}$.
We define $\Gamma_{\zeta} = \{\gamma \in S_{\ell}
\mid \gamma \zeta = \zeta\}$.
The group $\Gamma_{\zeta}$ has a biholomorphic action on $\Sigma$ 
which permutes interior marked points $\vec z^+$.
This action is necessarily trivial on the disk components.
In the following definition, $S\Sigma$ denotes $\pi^{-1}(\zeta) \cap \mathcal{SC}^{\rm d}_{k+1,\ell}$.
This set consists of boundary and interior nodes of $\Sigma$.

\begin{definition}\label{defn52}
An {\it $\epsilon$-trivialization of the universal family} (\ref{map52}) at $\zeta$ 
is the following object.
\begin{enumerate}
        \item A $\Gamma_{\zeta}$-invariant relatively compact open subset $K \subset \Sigma \setminus S\Sigma$.
        \item A $\Gamma_{\zeta}$-invariant neighborhood $\mathcal U$ of $\zeta$ in  $\mathcal M^{\rm d}_{k+1,\ell}$.
        \item
        A smooth open embedding 
        $\Phi : K \times \mathcal U \to \mathcal C^{\rm d}_{k+1,\ell} \setminus 
        \mathcal{SC}^{\rm d}_{k+1,\ell}$, 
        which is $\Gamma_{\zeta}$-invariant.
        \item
        The following diagram commutes.
       	\[
	\begin{tikzcd}
		K \times \mathcal U \ar[r,"\Phi"]\ar[d]&\mathcal C^{\rm d}_{k+1,\ell} \ar[d,"\pi"]\\
		 \mathcal U\ar[r]&\mathcal M^{\rm d}_{k+1,\ell}
	\end{tikzcd}
	\]
        Here the left vertical arrow is the projection map and the second horizontal arrow is the 
        inclusion map.
        \item
        The image of $\Phi$ contains 
        $\pi^{-1}(\mathcal U) \setminus B_{\epsilon}(S\Sigma)$.
        Here $B_{\epsilon}(S\Sigma)$ is the $\epsilon$ neighborhood 
        of $S\Sigma$ in $\mathcal C^{\rm d}_{k+1,\ell}$.
\end{enumerate}
\end{definition}
The existence of $\epsilon$-trivialization for any $\epsilon$ is a consequence of Lemma \ref{lem51}.
\begin{remark}
        If $\Sigma$ is a disk then $S\Sigma$ is an empty set.
        In this case, an $\epsilon$-trivialization of the universal family 
        is a local trivialization. Note that (\ref{map52}) is a 
        fiber bundle in the $C^{\infty}$ category in a neighborhood 
        of such elements of $\mathcal M^{\rm d}_{k+1,\ell}$.
\end{remark}
\begin{remark}
        A similar `trivialization of the universal family' is 
        described in \cite[Section 9]{FO} 
        or \cite[Section 4]{part2:kura}.
        This notion is employed
        to define a topology in \cite[Definition 10.2]{FO}.
        In \cite{FO} and subsequent works such as 
        \cite[Section 16]{foootech} and \cite[Section 8]{foooexp}, 
        more specific choices of trivializations are 
        used. Namely, a particular choice of coordinate charts
        are used at the nodes and the gluing construction is exploited 
        to obtain a nearby element of $\mathcal M^{\rm d}_{k+1,\ell}$
        and a coordinate of the gluing parameter.
        Such a choice would be useful to study gluing analysis 
        of pseudo-holomorphic curves and to construct Kuranishi neighborhoods, as it was done in \cite{FO,foootech,foooexp}.
        See also \cite{part2:kura}.
        To define stable map topology, we do not
        need these specific choices and can work with 
        any  $\epsilon$-trivialization of the universal family 
        in the above sense.
\end{remark}
Next, we review the stable map compactification $\mathcal M^{\rm d}_{k+1,\ell}(\beta)$ of $\mathcal M^{0,\rm d}_{k+1,\ell}(\beta)$ for $\beta\in\Pi_2(X,L)$. The space $\mathcal M^{0,\rm d}_{k+1,\ell}(\beta)$ consists of $((D^2,\vec z,\vec z^+),u)$ where $(D^2,\vec z,\vec z^+) \in \mathcal M^{\rm 0, d}_{k+1,\ell}$ and $u : (D^2,\partial D^2) \to (X,L)$ is a pseudo-holomorphic map. An element of $\mathcal M_{k+1,\ell}^{\rm d}(\beta)$ is an isomorphism class of  $((\Sigma,\vec z,\vec z^+),u)$, where $\Sigma$ is a tree like union of disks with double points plus trees of sphere components attached to the interior points of the disks, $\vec z$ and $\vec z^+$ are boundary and interior marked points, and $u :(\Sigma,\partial \Sigma) \to (X,L)$ is a pseudo-holomorphic map. The object $((\Sigma,\vec z,\vec z^+),u)$ is required to satisfy the stability condition. (See \cite[Definition 2.1.24]{fooobook} for more details.) We say an element $((\Sigma,\vec z,\vec z^+),u) \in \mathcal M^{\rm d}_{k+1,\ell}(\beta)$  is {\it source stable} if 
$(\Sigma,\vec z,\vec z^+)\in \mathcal M^{\rm d}_{k+1,\ell}$ is stable.

\begin{definition}\label{defn55}
        Let $((\Sigma,\vec z,\vec z^+),u), ((\Sigma_a,\vec z_a,\vec z^+_a),u_a)$
        ($a=1,2,3,\dots$) belong to $\mathcal M^{\rm d}_{k+1,\ell}(\beta)$.
        We assume they are all source stable.
        We say $((\Sigma_a,\vec z_a,\vec z^+_a),u_a)$ converges to 
        $((\Sigma,\vec z,\vec z^+),u)$ in the 
        stable map topology and write
        $$
        \underset{a \to \infty}{\rm lims}
         ((\Sigma_a,\vec z_a,\vec z^+_a),u_a)
        =
        ((\Sigma,\vec z,\vec z^+),u)
        $$
        if the following holds.
        For each $\epsilon$, there exist $\epsilon' > 0$ and an $\epsilon'$-trivialization of the universal family 
        at $\zeta = (\Sigma,\vec z,\vec z^+)$, denoted by $(K,\mathcal U,\Phi)$, with the following properties:
        \begin{enumerate}
                \item Let $\xi_a = (\Sigma_a,\vec z_a,\vec z^+_a)$. This sequence converges to $\xi$ in $\mathcal M^{\rm d}_{k+1,\ell}$.
                \item If $a$ is large, then for any connected component $C$ of $\pi^{-1}(\xi_a) \setminus \Phi(K \times \{\xi_a\})$, the diameter of $u_a(C)$ is smaller than $\epsilon$.
                \item We define $u'_a : K \to X$ by $u'_a(z) = u_a(\Phi(z,\xi_a))$. Then the $C^2$ distance between $u'_a$ and $u$ is smaller than $\epsilon$ for sufficiently large 
                values of $a$.
        \end{enumerate}
\end{definition}
In the same way as in (\ref{forget5151}), we define:
\begin{equation}\label{forget515122}
\frak{fg}_{\frak i_{k,k'},\frak i^+_{\ell,\ell'}} : 
\mathcal M^{\rm d}_{k'+1,\ell'}(\beta) \to \mathcal M^{\rm d}_{k+1,\ell}(\beta).
\end{equation}
We shall use this map in the case $k=k'$, $\frak i_{k,k'}(i) = i$  and 
$\frak i^+_{\ell,\ell'}(i) = i$ for $i \le \ell$. In this case, this map is denoted by the simplified notation $\frak{fg}_{\ell',\ell}$.

\begin{definition}\label{defn5656}
Let $\zeta_a = ((\Sigma_a,\vec z_a,\vec z^+_a),u_a),
\zeta = ((\Sigma,\vec z,\vec z^+),u)$ be elements of 
$\mathcal M^{\rm d}_{k+1,\ell}(\beta)$.
We say that $\zeta_a$ converges to $\zeta$ in the {\it stable map topology} and 
write 
$$
\lim_{a \to \infty} \zeta_a = \zeta
$$
if there exists 
$\zeta'_a, \zeta' \in \mathcal M^{\rm d}_{k+1,\ell'}(\beta)$
($\ell' \ge \ell$) with the following properties:
\begin{enumerate}
\item
$\zeta'_a, \zeta'$ are source stable.
\item
$\frak{fg}_{\ell,\ell'}(\zeta'_a) = \zeta_a$.
$\frak{fg}_{\ell,\ell'}(\zeta') = \zeta$.
\item
$
\underset{a \to \infty}{\rm lims}\,
\zeta'_a
=
\zeta'$ in the sense of Definition \ref{defn55}.
\end{enumerate}
\end{definition}
Related to this definition, we have the following lemma, whose proof is
given in \cite[Lemma 12.13]{fu2017}. 
It is also a consequence of \cite[Lemma 4.14]{fooo:const1}.
\begin{lemma}\label{lem57}
        Let $\zeta,\zeta_a \in \mathcal M^{\rm d}_{k+1,\ell}(\beta)$.
        We assume $
        \lim_{a \to \infty} \zeta_a = \zeta
        $.
        Suppose $\zeta' \in \mathcal M^{\rm d}_{k+1,\ell'}(\beta)$
        such that
        \begin{enumerate}
        \item[(a)]
        $\zeta'$ is source stable.
        \item[(b)]
        $\frak{fg}_{\ell,\ell'}(\zeta') = \zeta$.
        \end{enumerate}
        Then there exists $\zeta'_a\in \mathcal M^{\rm d}_{k+1,\ell'}(\beta)$ such that
        Definition \ref{defn5656} {\rm (1), (2)} and {\rm (3)} hold.
\end{lemma}
\begin{remark}
	Lemma \ref{lem57} implies that for source stable objects, Definition \ref{defn5656} coincides with Definition \ref{defn55}.
\end{remark}
We define the closure operator 
$c$ for subsets of $\mathcal M^{\rm d}_{k+1,\ell}(\beta)$
as follows.
If $A \subset \mathcal M^{\rm d}_{k+1,\ell}(\beta)$, then
$A^c$ is the set of all limits of sequences of 
elements of $A$. Here the limit is taken in the sense 
of Definition \ref{defn5656}.

\begin{lemma}
	The closure operator $c$ satisfies the Kuratowsky's axioms. 
	Namely, we have:
		{\rm (a)} ${\emptyset}^c = \emptyset$,
		{\rm (b)} $A \subseteq A^c$,
		{\rm (c)} $(A^c)^c =  A^c$,
		{\rm (d)} $(A\cup B)^c = A^c \cup B^c$.
\end{lemma}
See \cite[Lemma 12.15]{fu2017} for the proof. This closure operator allows us to define a topology on $\mathcal M^{\rm d}_{k+1,\ell}(\beta)$, called the {\it stable map topology}. In the same way as in \cite[Lemma 10.4]{FO}, we can prove that the stable map topology is Hausdorff. In the same way as in \cite[Lemma 11.1]{FO}, 
we can prove that the stable map topology is compact.
We can define the stable map topology for moduli spaces of  pseudo-holomorphic spheres or strips in the same way.


\subsection{RGW Topology} 
\label{subsec:RGW1def}

In the rest of this section, we define the RGW topology and study some of its properties. In this subsection, we define limits of sequences of pseudo-holomorphic disks, spheres and strips in the RGW topology, and then we prove sequential compactness for the RGW topology. As a part of the definition of the RGW topology, we explain when two elements of compactified moduli spaces are {\it $\epsilon$-close} to each other. The way that this notion is defined makes it clear that Kuranishi neighborhoods of points of the moduli space (to be constructed in \cite{part2:kura}) contains 
a neighborhood of that point in the moduli space. We shall also show that the compactified moduli spaces are metrizable. In particular, they are Haussdorff and their sequential compactness imply that RGW moduli spaces are compact. 
Since the definitions and proofs are mostly similar in the case of strips and spheres, we mainly focus on the case of disks and then make some comments on how they should be adapted to the case of strips and spheres.

\subsubsection{RGW Topology for Disks 1: Introducing Interior 
Marked Points}
\label{subsub:disktopirr1}
We first generalize the RGW compactification in 
Section \ref{Sec:RGW-Compactification} to the case of the moduli space 
of pseudo-holomorphic disks 
equipped with interior marked points.
We modify the moduli space 
$\mathcal M_{k+1}^{\rm reg, d}(\beta;{\bf m})$ 
of Definition \ref{defn334444}
and define $\mathcal M_{k+1,h}^{\rm reg, d}(\beta;{\bf m})$
as follows.
\begin{definition}
        The set $\mathcal M_{k+1,h}^{\rm reg, d}(\beta;{\bf m})$
        consists of isomorphism classes of
        $((\Sigma,\vec z,\vec z^+,\vec w),u)$
        such that
        (1)-(6) of Definition \ref{defn334444} and the following conditions 
        are satisfied.
        \begin{enumerate}
                \item[(i)]
                $\vec z^+ = (z^+_1,\dots,z^+_{h})$, where
                $z^+_i \in {\rm Int}\, \Sigma$. These points are distinct and away from $\vec w$.
                \item[(ii)]
                $((\Sigma,\vec z,\vec z^+\cup \vec w),u)$
                is stable in the sense of stable maps.
        \end{enumerate}
\end{definition}
We define $\mathcal M^{\rm reg, s}_h(\alpha;{\bf m})$ by modifying $\mathcal M^{\rm reg, s}(\alpha;{\bf m})$ of Definition \ref{defn333555} in a similar way. Namely, an element of $\mathcal M^{\rm reg, s}_h(\alpha;{\bf m})$ has the form  $((\Sigma,\vec z^+,\vec w),u)$ which satisfies Definition \ref{defn333555} (1)-(5), $\vec z^+$ is an $h$-tuple of distinct marked points away from $\vec w$, and $((\Sigma,\vec z^+\cup \vec w),u)$ is stable.

Let ${\bf m} = (m_0,\dots,m_{\ell})$. We define $\widetilde{\mathcal M}^{0}_{h}(\mathcal D\subset X;\alpha;{\bf m})$ as the set of strong isomorphism classes of $((\Sigma,\vec z^+,\vec w);u;s)$ such that $((\Sigma,\vec w);u;s)$ satisfies Definition \ref{defn3434} (1)-(4) and $\vec z^+$ is an $h$-tuple of additional distinct marked points disjoint from $\vec w$.
We also require stability of $((\Sigma,\vec w\cup \vec z^+);u)$ instead of Definition \ref{defn3434} (5).
\par
Using these spaces as in Section \ref{Sec:RGW-Compactification}, we may define a compactification of 
$\mathcal M_{k+1,h}^{\rm reg, d}(\beta)$
as follows. 
We consider a generalization of the notion of detailed DD-ribbon trees of homology class $\beta$, where one such detailed DD-ribbon tree $\hat R$ is required to satisfy the additional conditions:
\begin{enumerate}
        \item[(DD+.1)]
        We have a map ${\rm mk} : \{1,\dots,h\} \to C^{\rm int}_0(\hat R)$,
        that describes each element of $\vec z^+$ on which component lies.
        Define:
        $$
        h_v = \# \{i \in \{1,\dots,h\} \mid {\rm mk}(i) = v\}.
        $$
        \item[(DD+.2)] We modify stability as follows. 
        For each interior vertex $v$ of $\hat R$ we assume one of the  
        following holds.
        \begin{enumerate}
                \item
                The homology class of $v$ is nonzero.
                \item
                If the color of $v$ is $\rm s$ or $\rm{D}$, then the number 
                of edges containing $v$ plus $h_v$ is not smaller than $3$.
                \item
                If the color of $v$ is $\rm d$, then the following 
                inequality holds:
                $$
                \aligned
                &2 (\text{the number of edges of positive level}) \\
                &+ 
                (\text{the number of edges of level 0})
                + 2 h_v \ge 3.
                \endaligned
                $$
        \end{enumerate}
\end{enumerate}
We denote by $\mathcal R^+$ the pair $(\mathcal R,{\rm mk})$. We then modify the fiber product \eqref{fpro351} as follows. Firstly we need to replace $\widetilde{\mathcal M}^0(\mathcal R,v)$ with $\widetilde{\mathcal M}^0(\mathcal R^+,v)$, defined as below.
If we are in the situation of \eqref{form335}, 
where the color of $v$ is $\rm d$, then:
\begin{equation}\label{fpr315315rev22}
\widetilde{\mathcal M}^0(\mathcal R^+,v) 
= 
\mathcal M_{k+1,{h_v}}^{\rm reg, d}(\alpha(v);{\bf m}^v).
\end{equation}
Here ${\bf m}^v$ is defined as in \eqref{form335}. 
If we are in the situation of \eqref{form336}, where the color of $v$ is $\rm s$, then:
\begin{equation}\label{form3362}
	\widetilde{\mathcal M}^0(\mathcal R^+,v) = \mathcal M_{h_v}^{\rm reg, s}(\alpha(v);{\bf m}^v).
\end{equation}
Here ${\bf m}^v$ is defined as in \eqref{form336}. 
If we are in the situation of \eqref{fpr315315rev}, where the color of $v$ is $\rm{D}$, then:
\begin{equation}\label{fpr31531522}
	\widetilde{\mathcal M}^0(\mathcal R^+,v)= \widetilde{\mathcal M}^{0}_{h_v}(\mathcal D\subset X;\alpha(v);{\bf m}^v).
\end{equation}
Here ${\bf m}^v$ is defined as in \eqref{fpr315315rev}. 

Thus we modify \eqref{fpro351}, \eqref{form352minus} and \eqref{form352} to obtain, $\widetilde{\mathcal M}^0(\mathcal R^+)$, $\widehat{\mathcal M}^0(\mathcal R^+)$, and ${\mathcal M}^{0}(\mathcal R^+)$. We finally modify \eqref{form3531} to: 
\begin{equation}
	\mathcal M^{\rm RGW}_{k+1,h}(L;\beta)= \bigcup_{\mathcal R^+}{\mathcal M}^{0}(\mathcal R^+).
\end{equation}

\subsubsection{RGW Topology for Disks 2: Definition of Convergence}
\label{subsub:disktopirr2}

In our definition of the RGW topology, we use the obvious forgetful map
\begin{equation}
\frak{forget} : \mathcal M^{\rm RGW}_{k+1,h}(L;\beta)
\to \mathcal M_{k+1,h}(L;\beta)
\end{equation}
from the RGW compactification to the stable map compactification. Namely, for each factor $\widetilde{\mathcal M}^0(\mathcal R^+,v)$, we forget various parts of the information associated to that element (such as the section $s_v$ in the case that the color of $v$ is $\rm{D}$) and glue them according to the detailed DD-ribbon tree $\hat{\mathcal R}^+$. (Note that in the case that the color of $v$ is $\rm{D}$, the target of the map $u_v$ is $\mathcal D$ which is a subset of $X$.
So we can regard it as a map to $X$.)
\begin{shitu}\label{situ51212}
        We consider the following situation.
        \begin{enumerate}
                \item
                $\zeta_a = ((\Sigma(a),\vec z(a),\vec z^+(a)),u_a) 
                \in \mathcal M_{k+1,h}^{\rm reg, d}(\beta;\emptyset)$.
                \item $\zeta \in {\mathcal M}^{0}(\mathcal R^+)
                \subset\mathcal M^{\rm RGW}_{k+1,h}(L;\beta)$ where
                $\zeta = (\zeta(v);v \in C_0^{\rm int}(\hat R^+))$ and:
                \begin{enumerate}
                \item
                $\zeta(v) 
                \in \mathcal M_{k+1,{h_v}}^{\rm reg, d}(\alpha(v);{\bf m}_+^v)$
                when the color of $v$ is $\rm d$.
                We write 
                $\zeta(v)= ((\Sigma(v),\vec z(v),\vec z^+(v),\vec w(v)),u_v)$.
                \item
                $\zeta(v) 
                \in 
                \mathcal M_{h_v}^{\rm reg, s}(\alpha(v);{\bf m}^v)
                $
                when the color of $v$ is $\rm s$, and
                $\zeta(v)$ is given by $((\Sigma(v),\vec z^+(v),\vec w(v)),u_v)$.
                \item
                $\zeta(v) 
                \in \widetilde{\mathcal M}^{0}_{h_v}
                (\mathcal D\subset X;\beta(v);{\bf m}^v)$
                when the color of $v$ is $\rm{D}$.
                We write 
                $\zeta(v) = ((\Sigma(v),\vec z^+(v),\vec w(v));u_v;s_v)$.       
                \end{enumerate}
                \item
                We assume 
                $$
                \lim_{a\to \infty} \frak{forget}(\zeta_a) = \frak{forget}(\zeta).
                $$
                Here the convergence is given by the stable map topology.
                \item
                We assume $\frak{forget}(\zeta_a)$ and 
                $\frak{forget}(\zeta)$ are source stable.
        \end{enumerate}
\end{shitu}
We firstly define when a sequence $\zeta_a$ as in Situation \ref{situ51212} converges to $\zeta$ in the RGW topology. Later, we reduce the definition of the RGW topology in the general case to this special situation. Let $\xi_a$ and $\xi$ be source curves of $\frak{forget}(\zeta_a)$ and $\frak{forget}(\zeta)$, respectively. By assumption (Situation \ref{situ51212} (3) and (4)),
$\xi_a$ converges to $\xi$ in $\mathcal M^{\rm d}_{k+1,h}$. For each sufficiently small $\epsilon$, we take an $\epsilon$-trivialization of the universal family in the sense of Definition \ref{defn55}, which we denote by $(K,\mathcal U,\Phi)$. We also define:
\begin{equation}\label{Kv}
  K(v) = K \cap \Sigma(v).
\end{equation}
By Definition \ref{defn55} (3), $u'_a(z) = u_a(\Phi(z,\zeta_a))$ converges to $u$ in the $C^2$ topology on each $K(v)$. We denote by $u'_{a,v}$ the restriction of $u'_a$ 
to $K(v)$.
\par
Let $v$ be a vertex with color $\rm{D}$. Then for sufficiently large $a$, we may assume
\begin{equation} \label{inclusion}
	u'_{a,v}(z) \in \mathcal N_{\mathcal D}^{\leq c}(X).
\end{equation}
for $z\in K(v)$. Here $\mathcal N_{\mathcal D}^{\leq c}(X)$ is the set of $(p,x) \in \mathcal N_{\mathcal D}(X)$ such that $p \in \mathcal D$ and $x$ is in the fiber of $\mathcal N_{\mathcal D}(X)$ with $\Vert x\Vert \le c$, which is also identified with a regular neighborhood of $\mathcal D$ in $X$ using an almost complex structure preserving symplectomorphism. (See Subsection \ref{subsec:sympproj}.) 
We use this symplectomorphism to make sense of \eqref{inclusion}. Therefore, we can use \eqref{inclusion}, to obtain maps:
\begin{equation}\label{511form}
u'_{a,v} : K(v) \to \mathcal N_{\mathcal D}(X).
\end{equation}
The data of $\zeta$ include a section $s_v$ of $u_v^*\mathcal N_{\mathcal D}(X)$.
We use this section to obtain the map
\begin{equation}\label{512form}
U_v : \Sigma(v)\setminus \vec w(v) \to \mathcal N_{\mathcal D}(X)\setminus \mathcal D.
\end{equation}
Recall that the definition of $\mathcal M^0(\mathcal R) 
\subset \mathcal M^{\rm RGW}_{k+1,h}(L;\beta)$ involves taking the 
quotient by the $\bbC_{*}^{\vert \lambda\vert}$-action and ${\rm Aut}(\mathcal R)$.
(See \eqref{form352minus} and \eqref{form352}.)
Here we fix one representative for these quotients.

The main requirement that we need to define is how the sections $s_v$ are related to the objects $\zeta_a$. Let $\vert \lambda\vert$ be the number of levels of $\hat R$. We have the identification:
\begin{equation}\label{form513}
	\mathcal N_{\mathcal D}(X) \setminus \mathcal D=\bbR \times S(\mathcal N_{\mathcal D}(X))
\end{equation}
where $S\mathcal N_{\mathcal D}(X)$ is the unit $S^1$-bundle associated to $\mathcal N_{\mathcal D}(X)$.

\begin{definition}\label{defn513}
        Suppose we are in Situation \ref{situ51212}.
        We say that $\zeta_a$ converges to $\zeta$ in the RGW topology and write
        $$
        \underset{a \to \infty}{\rm lims}\,\zeta_a = \zeta,
        $$
        if for each $j \in \{1,\dots,\vert \lambda\vert\}$, there exists a sequence $\rho_{a,j} \in \bbC_{*}$ and for each $\epsilon>0$ there are an $\epsilon$-trivialization $(K,\mathcal U,\Phi)$
        as above and an integer $N(\epsilon)$ such that the following properties hold:
        \begin{enumerate}
                \item
                For $a\geq N(\epsilon)$ and any $v$, we have $\xi_a\in \mathcal U$ and:
                $$
                d_{C^2}\left({\rm Dil}_{1/\rho_{a,\lambda(v)}}\circ u'_{a,v},U_v\right) < \epsilon
                $$
                We use the product metric on \eqref{form513} to define the $C^2$ distance for the maps with domain $K(v)$. 
                The map ${\rm Dil}_{c}$ is given by dilation in the fiber direction of $\mathcal N_{\mathcal D}(X)$.
                In particular, it is an isometry with respect to the product metric.
                \item
                If $j < j'$ then 
                $$
                \lim_{a \to \infty} \frac{\rho_{a,j}}{\rho_{a,j'}} = \infty.
                $$
        \end{enumerate}
\end{definition}
Roughly speaking, Item (1) says that on $K(v)$, the sequence of maps $u_a$ converges to $U_v$ after scaling by $\rho_{a,\lambda(v)}$. Item (2) asserts that the distance between $u_a(K(v'))$ and $\mathcal D$ goes to zero faster than the distance between $u_a(K(v))$ and $\mathcal D$ if $\lambda(v) < \lambda(v')$. The idea of using dilation in the above definition of convergence goes back to \cite{LR}.
\begin{remark}
	As we mentioned before, there is an ambiguity of the choice of the representative of $\zeta_a$ because of the $\bbC_{*}^{\vert \lambda\vert}$-action. 
	If we take another choice, we can change $\rho_{a,j}$ to $\rho_{a,j}c_j$ where $c_j \in \bbC_{*}$. Therefore, 
	Definition \ref{defn513} is independent of the choice of representatives.
\end{remark}
In Definition \ref{defn513}, we define the RGW convergence in the case that source curves are stable. We can define the general case analogous to the stable map topology as follows. Firstly note that we can define a forgetful map
$$
\frak{fg}_{h',h} : 
\mathcal M^{\rm RGW}_{k+1,h'}(L;\beta)
\to
\mathcal M^{\rm RGW}_{k+1,h}(L;\beta)
$$
of interior marked points for $h' > h$. Namely, we forget the marked points with labels  $(h+1),\dots,h'$ and shrink the components which become unstable. In this process, the level function $\lambda$ may not be preserved because all the components in a certain level may be shrunk. In the case that this happens, we remove such a level, say $m$, and decrease the value of the level function for each vertex of level $>m$. In other words, the quasi order $\le$ is preserved by this shrinking process. In particular, this construction is similar to the forgetful map \eqref{forget515122} in the discussion of stable map topology. See also \cite[Subsection 4.2]{part3:FH} on the forgetful map of the boundary marked points.

\begin{definition}\label{defn515}
        Let $\zeta_a \in \mathcal M_{k+1,h}^{\rm reg, d}(L;\beta;\emptyset)$
        and $\zeta \in \mathcal M^{\rm RGW}_{k+1,h}(L;\beta)$.
        We say that $\zeta_a$ converges to $\zeta$ and write 
        $$
        \lim_{a \to \infty} \zeta_a = \zeta,
        $$
        if there are source stable elements $\zeta'_a \in \mathcal M_{k+1,h'}^{\rm reg, d}(L;\beta;\emptyset)$
        and $\zeta' \in \mathcal M^{\rm RGW}_{k+1,h'}(L;\beta)$ as in Situation \ref{situ51212} such that
        \begin{enumerate}
        \item
        $\frak{fg}_{h',h}(\zeta'_a) = \zeta_a$, 
        $\frak{fg}_{h',h}(\zeta') = \zeta$.
        \item
        $
        \underset{a \to \infty}{\rm lims}\,\zeta'_a = \zeta'.
        $
        \end{enumerate}
\end{definition}

\subsubsection{RGW Topology for Disks 3: Compactness}
\label{subsub:disktopirr23}
The next proposition provides the main part in showing that the space $\mathcal M^{\rm RGW}_{k+1,h}(L;\beta)$ is compact:

\begin{prop}\label{prop516}
	For any sequence $\zeta_a \in \mathcal M_{k+1,h'}^{\rm reg, d}(L;\beta)$,
	there exists a subsequence which converges in the sense of Definition \ref{defn515}.
\end{prop}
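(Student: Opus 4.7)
The plan is to combine Gromov compactness for the stable map topology with an iterated rescaling argument in the normal direction of $\mathcal D$. First, apply the classical Gromov/stable map compactness to the sequence $\zeta_a \in \mathcal M_{k+1,h}^{\rm reg, d}(L;\beta;\emptyset)$ to extract a subsequence (still denoted $\zeta_a$) converging in the stable map topology (Definition \ref{defn5656}) to some $\eta \in \mathcal M_{k+1,h}(L;\beta)$. Since the RGW topology refines the stable map topology via $\frak{forget}$, only the additional RGW data needs to be produced. By adding extra interior marked points at the nodes and at the roots of bubbles (and using Lemma \ref{lem57}) we may pass to source stable enrichments $\zeta_a' \in \mathcal M_{k+1,h'}^{\rm reg, d}(L;\beta;\emptyset)$ and $\eta'$, placing ourselves in Situation \ref{situ51212}.

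Next, I would classify each irreducible component $\Sigma(v)$ of the limit source curve according to whether $u_v(\Sigma(v)) \subset \mathcal D$ (assigning color $\mathrm{D}$) or not (color $\mathrm{d}$ for the disk components and $\mathrm{s}$ for sphere bubbles in $X$). Positivity of intersection (Lemma \ref{lem27}) and the standard local analysis of pseudo holomorphic curves intersecting a symplectic divisor force the local intersection multiplicities at each node on the boundary between a $\mathrm{D}$-component and a non-$\mathrm{D}$-component to be positive integers; these will be the edge multiplicities $m(e)$ of the detailed tree and they will automatically satisfy the balancing identity \eqref{bananching} (essentially by \eqref{form3535} applied to the normal-bundle-valued section constructed in the next step).

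For each component $v$ of color $\mathrm{D}$, use the partial $\C_*$-action in a tubular neighborhood of $\mathcal D$ (Subsection \ref{subsec:Cstaraction}) to construct a meromorphic section $s_v$. Fix a compact exhaustion $K(v) \subset \Sigma(v)$ away from the nodes and marked points; then $u_a$ restricted to the part of $\Sigma(a)$ corresponding to $K(v)$ takes values in $\mathcal N_{\mathcal D}^{\le c}(X)$ for large $a$, and one can select $\rho_{a,v} \in \C_*$ (using the $S^1$-bundle splitting \eqref{form513}) as the $K(v)$-average distance of $u_a$ to $\mathcal D$. After extracting a further subsequence, the rescaled maps $\mathrm{Dil}_{1/\rho_{a,v}} \circ u_a|_{K(v)}$ are uniformly bounded and neither converge to $\mathcal D$ nor escape to infinity; elliptic bootstrapping produces a $C^2_{\mathrm{loc}}$ limit which, after removing the removable singularities at the nodes and marked points, extends to a holomorphic map $\Sigma(v) \to {\bf P}(\mathcal N_{\mathcal D}(X) \oplus \C)$ projecting to $u_v$. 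This gives the meromorphic section $s_v$ of $u_v^*\mathcal N_{\mathcal D}(X)$ with the correct pole/zero multiplicities, hence an element $\zeta(v)\in \widetilde{\mathcal M}^0_{h_v}(\mathcal D \subset X;\alpha(v);{\bf m}^v)$.

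Finally, organize the data into a detailed DD-ribbon tree $\mathcal R^+$. Define a relation on the set of $\mathrm{D}$-components by declaring $v \le v'$ iff $\rho_{a,v}/\rho_{a,v'}$ is bounded along the chosen subsequence; after another diagonal extraction this becomes a quasi-order, which by Lemma \ref{lemma333333} determines a level function $\lambda$. That $\le$ refines the combinatorial order $\le_0$ coming from the signs of the multiplicities is exactly the content of Definition \ref{defn513}(2) together with the local model near a node: on the $v'$ side of an edge $e$ with $m(e)>0$ flowing out of $v$, the normal component blows up by a factor of $|\rho_{a,v}|^{m(e)}$ faster than on the $v$ side. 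Checking the resulting $\zeta = (\zeta(v))$ indeed lies in ${\mathcal M}^0(\mathcal R^+)$ and satisfies Definition \ref{defn513}(1) for the $\rho_{a,\lambda(v)}$ chosen above (any common representative within a level works modulo the $\C_*^{|\lambda|}$-action) then yields $\lim_{a\to\infty}\zeta_a' = \zeta'$; forgetting the auxiliary marked points and invoking Definition \ref{defn515} completes the proof.

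The main obstacle is the analytic step of producing the sections $s_v$ simultaneously with the level structure: one must show that at each rescaling the limit is non-degenerate on every component of a given level, iterating the rescaling procedure finitely many times in a controlled way because $\mathcal D$ is neither positive nor negative. This is essentially an SFT/relative Gromov--Witten style compactness argument, but adapted to our setting using the partial $\C_*$-action on the tubular neighborhood of $\mathcal D$; care is needed to ensure that the iterated rescaling terminates and that the level function it produces is compatible with the balancing condition at every $\mathrm{D}$-vertex.
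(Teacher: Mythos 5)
Your overall skeleton matches the paper's: first extract a stable map limit, then recover the extra RGW data (sections, multiplicities, levels, rescaling constants) by dilating in the normal direction of $\mathcal D$. The organizational difference is that you rescale each ${\rm D}$-component separately by an averaged distance $\rho_{a,v}$ and read off the level function afterwards from the ratios $\rho_{a,v}/\rho_{a,v'}$, whereas the paper rescales one level at a time, taking $\rho_{a,1}$ to be the \emph{supremum} of the fiber norm over the union of all ${\rm D}$-components minus the neck regions, and then iterating on whatever collapses into the zero section after that rescaling. The two bookkeeping schemes would be equivalent once the limit is known to exist, but as a proof your version is missing the analytic content.

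The genuine gap is the step you yourself flag: the assertion that ${\rm Dil}_{1/\rho_{a,v}}\circ u_a|_{K(v)}$ is uniformly bounded and has a non-degenerate limit. With $\rho_{a,v}$ an average, there is no a priori bound on $\sup_{K(v)}\Vert u_a\Vert/\rho_{a,v}$: the normal component of $u_a$ is a nowhere-vanishing holomorphic section whose modulus tends to $0$, and Harnack/Cauchy-type estimates control $\log$ of its modulus only up to multiplicative constants, which does not bound the ratio of sup to average; so "elliptic bootstrapping" does not get off the ground, and even if a sublimit exists it could be entirely contained in $\mathcal D_0$ or $\mathcal D_\infty$. The paper circumvents this by working in the compact target ${\bf P}(\mathcal N_{\mathcal D}(X)\oplus\C)$: it establishes the exact normal form $\exp(2\pi m(\tau+\sqrt{-1}t)-z_0)$ on the neck annuli together with exponential decay in the $\mathcal D$-directions (Lemma \ref{lem517}), uses this to cap off the rescaled maps into closed curves $u'''_a$ with a uniform energy bound (Lemma \ref{lem52020}), and only then invokes Gromov compactness. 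Non-degeneracy of at least one component of the limit is then automatic from the choice of $\rho_{a,1}$ as a supremum, the components that do degenerate into $\mathcal D_0$ are pushed to the next level and handled by iterating, and termination holds because each iteration produces a nonempty set $G_i$. The same neck analysis is also what produces the edge multiplicities and justifies your claim that the limiting quasi-order refines $\le_0$; citing "the local model near a node" does not substitute for it, so both the existence of the sections $s_v$ and the compatibility of the level function with the balancing condition remain unproved in your write-up.
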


As a preparation for the proof of this proposition, we need two lemmas. The first one is a standard exponential decay result.

\begin{lemma}\label{lem5170}
	There is a positive constant $\epsilon$ and for any positive integer $k$, there are constants $C_k$, $e_k$ such that the following holds. 
	Let $A_T:= [-T,T] \times S^1$ and $u:A_T \to  \mathcal D$ be a $J_{\mathcal D}$-holomorphic map such that 
	\[
	  \int_{A_T} u^*\omega_{\mathcal D} <\epsilon.
	\]
	Then
        	\[
	  \sum_{\ell = 1}^{k}\vert (\nabla^{\ell} u)(\tau,t)\vert \le C_k e^{-e_k (T- \vert \tau\vert)}
	\]
        	for $(\tau,t) \in [-T+1,T-1] \times S^1$. Here the left hand side is the $C^{k-1}$ norm of 
	the first derivative of $u$ with respect to the standard coordinates on $[-T,T] \times S^1$.
\end{lemma}

The following lemma is a standard fact about conformal maps between annuli. 
\begin{lemma}\label{lem517}
	Let $A(c_1,c_2) $ be the annulus given as 
	\[
	  \{ z \in \bbC \mid c_1 \le \vert z\vert \le c_2\}.
	\]	
  Let $u:[-T,T] \times S^1 \to  A(c_1,c_2)$ be a holomorphic map that
        \[
          \vert u(-T,t)\vert = c_1 \hspace{1cm}\vert u(T,t)\vert = c_2.
        \]
 	Then there exist a complex number $z_0$ and a positive integer $m$ such that:
	\begin{equation} \label{shape-u-1}
          u(\tau,t) = \exp (2\pi m(\tau + \sqrt{-1}t)-z_0).
        \end{equation}
\end{lemma}

\begin{proof}[Proof of Proposition \ref{prop516}]
	Suppose $\zeta_a=((\Sigma(a),\vec z(a),\vec z^+(a)),u_a)$ is an element of the moduli space
	$\mathcal M_{k+1,h'}^{\rm reg, d}(L;\beta)$. 
	Using compactness of the stable map compactification 
	\cite[Theorem 11.1]{FO}, we may assume that 
	there exist $\zeta'_a$ and $\zeta'$ which are source stable, 
	$\zeta'_a$ converges to $\zeta'$ in the stable map topology (Definition \ref{defn55})
	and $\frak{fg}(\zeta'_a) = \zeta_a$. Here $\frak{fg}$ is the forgetful map of interior marked points. 
	Without loss of generality, we can replace $\zeta_a$ by $\zeta'_a$. We also assume that 
	$\zeta = ((\Sigma,\vec z,\vec z^+),u)$. Let $G$ be the set of irreducible components of $\Sigma$.
	We will write $\Sigma_w$ and $u_w$
	for the irreducible component associated to $w\in G$ and the restriction of the map $u$ to this component. 
	The set $G$ can be divided into two parts:
	\[
	  G=G_0\cup G_{>0},
	\] 
	where $w\in G_{>0}$ if and only if $u(\Sigma_w)$ is contained in $\mathcal D$.

	We need to find 
	$\hat\zeta\in \mathcal M^{\rm RGW}_{k+1,h}(L;\beta)$ with $\frak{forget}(\hat\zeta) = \zeta$ and show that,
	after passing to a subsequence of $\{\zeta_a\}_{a\in \mathbb N}$,
	the properties of Definition \ref{defn513} hold.
	In particular, we need to find the following objects:
	\begin{enumerate}
	\item[(I)] A meromorphic section $s_w$ of $u_w^*(\mathcal N_{\mathcal D}(X))$
		for each $w\in G_{>0}$.
	\item[(II)] A level function $\lambda:G \to \{0,1,\dots,|\lambda|\}$.
	\item[(III)] A multiplicity function $m$ associated to any intersection point of two irreducible components 
	$\Sigma_w$ and $\Sigma_{w'}$ such that $\lambda(w)\neq \lambda(w')$.
	\item[(IV)] An element $\rho_{a,j} \in \bbC_{*}$ for each $j\leq |\lambda|$ and $a$.
	\end{enumerate}
	
	We construct the objects in (I)-(IV) in an inductive way.
	Let 
	\[
	  \Sigma(0) = \bigcup_{w \in G_{0}} \Sigma_w\hspace{2cm} \Sigma'(0) = \bigcup_{w \in G_{>0}} \Sigma_w
	\]
	In other words, $\Sigma(0)$ is the union of the irreducible components $\Sigma_w$ of $\Sigma$ 
	that $u(\Sigma_w)$ is {\it not} contained in $\mathcal D$, and $\Sigma'(0)$ is the union of all the remaining components.
	The intersection $S(0):=\Sigma(0) \cap \Sigma'(0)$ consists of finitely many nodal points.
	For each point $p\in S(0)$, we fix a small neighborhood $U_p$ (resp. $U'_p$) in $\Sigma(0)$
	(resp. $\Sigma'(0)$). Let $\frak U(0)$ (resp. $\frak U'(0)$) denote the union of the open sets $U_p$ (resp. $U'_p$).
	For each $p\in S(0)$, we also fix an open neighborhood of $u(p)\in \mathcal D$ that is contained in the 
	subspace\footnote{Here we assume that after a rescaling of the hermitian metric on the 
	bundle $ \mathcal N_{\mathcal D}(X)$,
	the disk bundle $ \mathcal N_{\mathcal D}^{<1}(X)$ can be identified with a neighborhood of $\mathcal D$ in $X$.} 
	$\mathcal N_{\mathcal D}^{<1}(X)$ of $X$.
	We may assume that this open set has the form $B(1)\times V_p$ with respect to a local unitary trivialization of 
	$\mathcal N_{\mathcal D}(X)$ where $B(r)$ is the ball of radius $r$ and 
	$V_p$ is the open ball of radius $1$ in $\mathbb C^{\dim (\mathcal D)}$. 
	After modifying the choice of $U_p$, we can also assume that $u$ maps $U_p$ to $B(\sigma)\times V_p$, 
	the boundary of $U_p$ to $S(\sigma)\times V_p$, $U_p'$ to $\{0\}\times V_p$ and $p$ to $(0,0)$. 
	Here $S(r)\subset \mathbb C$ 
	is the circle of radius $r$ and $\sigma<\frac{1}{2}$ is a positive real number, independent of $p$. 
	Finally, let $K(0)$ and $K^+(0)$ denote the subspaces $\Sigma'(0) \setminus \frak U'(0)$
	and $\overline{\Sigma'(0) \cup \frak U(0)}$.
	(See Figure \ref{Figure5-2}.)
        \begin{figure}[h]
        \centering
        \includegraphics[scale=0.3]{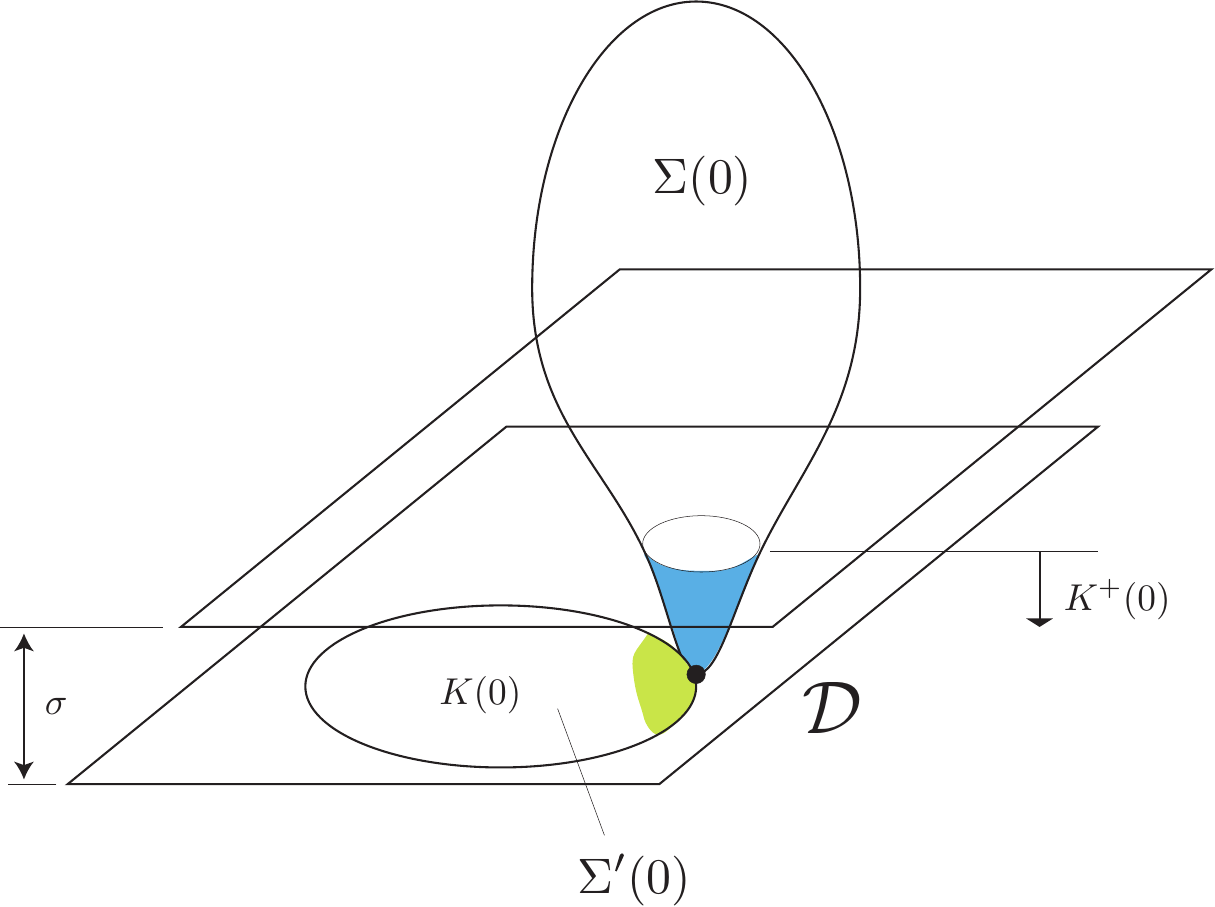}
        \caption{$K(0)$ and $K^+(0)$}
        \label{Figure5-2}
        \end{figure}	
	
	Let $U(\xi)$ be a small neighborhood of $\xi= (\Sigma,\vec z,\vec z^+)$
	in $\mathcal M^{\rm d}_{k+1,h}$. We fix $\mathcal K(0), \mathcal K^+(0) \subset \mathcal C^{\rm d}_{k+1,h}$
	with the following properties: (See Figure \ref{Figure5-3}.)
        \begin{enumerate}
                \item
                $\mathcal K(0), \mathcal K^+(0)$ are manifolds with boundary 
                and $\mathcal K(0) \setminus \partial \mathcal K(0)$, 
                $\mathcal K^+(0) \setminus \partial \mathcal K^+(0)$
                are open subsets of $\mathcal C^{\rm d}_{k+1,h}$.
                \item
                $\mathcal K(0) \cap \pi^{-1}(\xi) = K(0)$ and
                $\mathcal K^+(0) \cap \pi^{-1}(\xi) = K^+(0)$.
                \item
                $\mathcal K(0), \mathcal K^+(0)$ are closed subsets of 
                $\pi^{-1}(U(\xi))$.
        \end{enumerate}
	If $a$ is large enough, then $\xi_a = (\Sigma(a),\vec z(a),\vec z^+(a))$ is an element of $U(\xi)$.
	For such $a$, we denote $\mathcal K(0) \cap \pi^{-1}(\xi_a)$ and $\mathcal K^+(0) \cap \pi^{-1}(\xi_a)$ by $K_a(0)$
	and $K_a^+(0)$.
	For large enough values of $a$, $u_a(K_a^+(0))$ 
	is a subset of $\mathcal N_{\mathcal D}^{<2\sigma}(X)$ and the intersection 
	$u_a(K_a^+(0))\cap \mathcal N_{\mathcal D}^{<\sigma/2}(X)$ is non-empty. 
	Moreover, $K_a^+(0)\setminus K_a(0)$ has one connected component for each $p\in S(0)$.
	We write $K_{a,p}$ for the interior of this connected component which is an annulus. 
	If we only consider large enough values of $a$, then
	$u_a$ maps $K_{a,p}$ to $B(2\sigma)\times V_p$.
        \begin{figure}[h]
        \centering
        \includegraphics[scale=0.3]{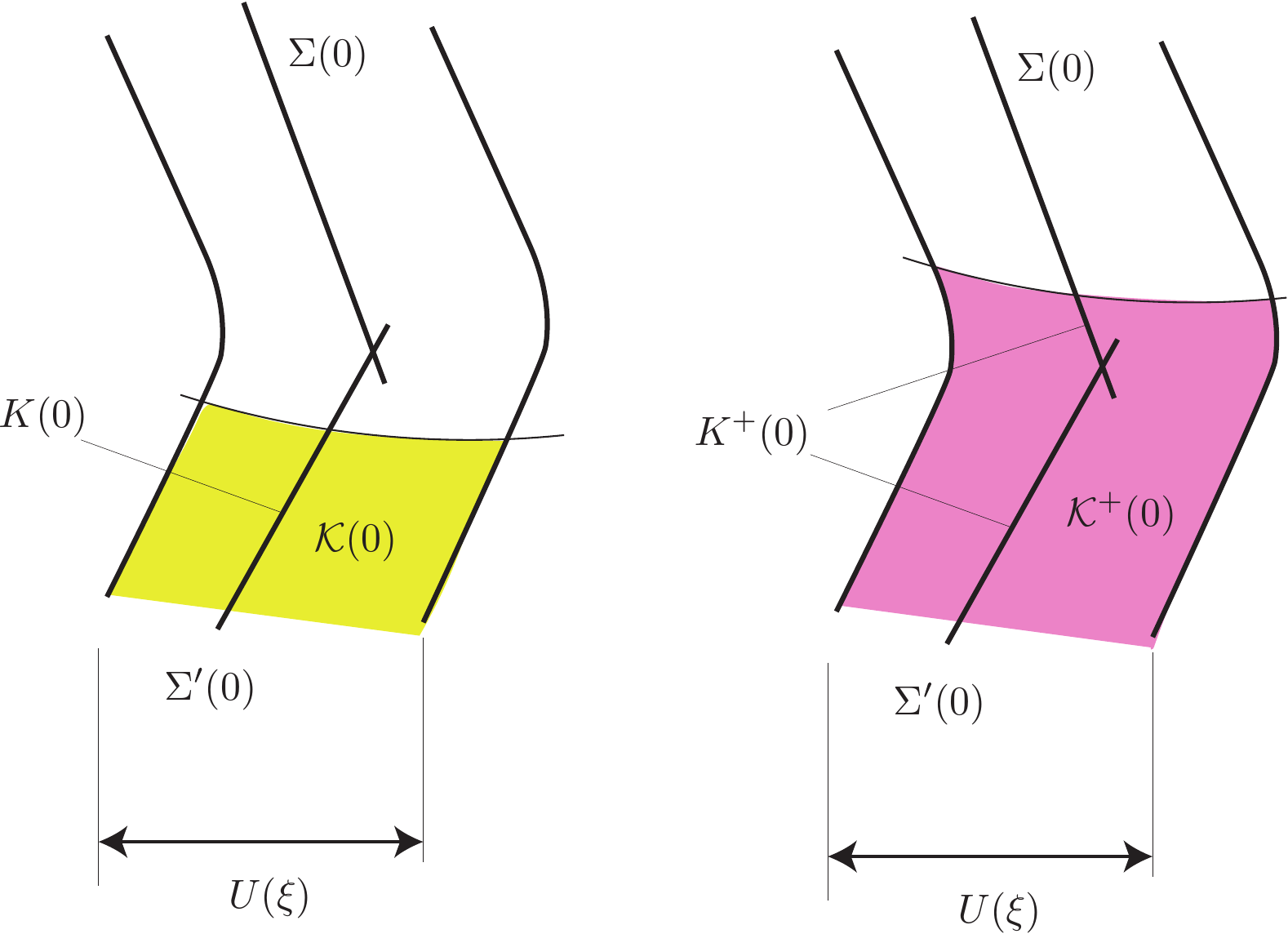}
        \caption{$\mathcal K(0)$ and $\mathcal K^+(0)$}
        \label{Figure5-3}
        \end{figure}

	We define:
	\[
	  \rho_{a,1} = \sup \{\Vert u_a(z)\Vert \mid z \in K_a(0)\}.
	\]
	where $\Vert \cdot \Vert$ denotes the fiber norm of elements of $\mathcal N_{\mathcal D}(X)$. Note that the supremum may be achieved either on the 
	boundary of $K_a(0)$ or at a point on an 
	irreducible component which does not intersect $S(0)$.
	(See Figure \ref{Figure5-4}.) 
	\begin{figure}[h]
		\centering
		\includegraphics[scale=0.5]{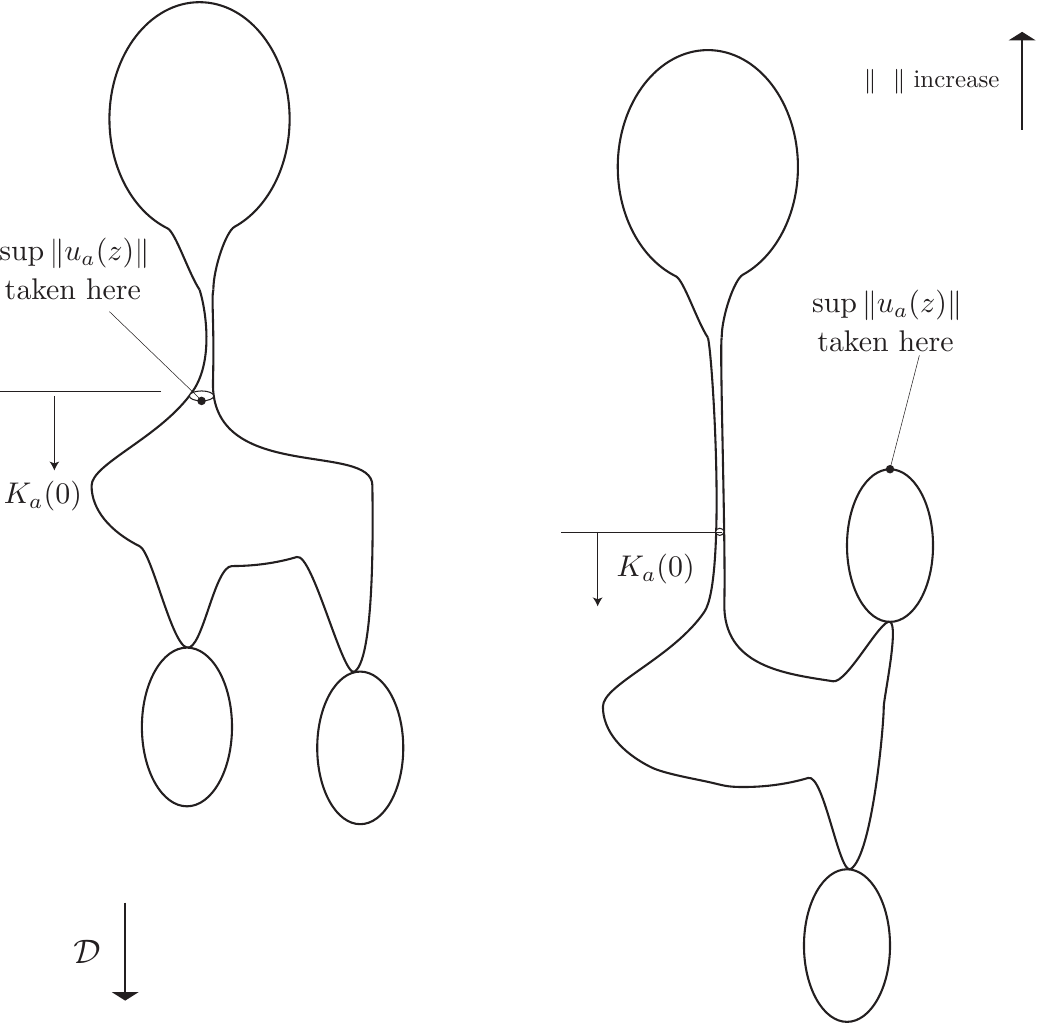}
		\caption{where $\sup$ is taken}
		\label{Figure5-4}
	\end{figure}
	By definition $\lim_{a\to \infty}\rho_{a,1} = 0$.
	The composition:
	\begin{equation}\label{fprm512}
		u'_a:={\rm Dil}_{1/\rho_{a,1}} \circ u_a : K^{+}_a(0) \to \mathcal N_{\mathcal D}^{<2\sigma/\rho_{a,1}}(X),
	\end{equation}
	is a holomorphic map, which sends $K_a(0)$ to $\mathcal N_{\mathcal D}^{\le1}(X)$.

	We pick $d_1 \in (1,2)$ and $d_{2,a} \in (\frac{\sigma}{2\rho_{a,1}}-1,\frac{\sigma}{2\rho_{a,1}})$
	such that they are regular values of the function: 
	\[
	  z\in K^{+}_a(0) \mapsto \Vert u'_a(z)\Vert.
	\]
	For $p\in S(0)$, 
	we define:
	\[
	  U_{a,p} = \{z \in K_{a,p}\mid  \Vert u'_a(z)\Vert \in [d_1,d_{2,a}] \}.
	\]
	
	The domain $U_{a,p}$ is conformally equivalent to an annulus $[-T_{a,p},T_{a,p}] \times S^1$. 
	Otherwise,  there exists a domain $D \subset K_{a,p} \setminus U_{a,p}$, 
	which is a connected component of 
		$D \subset K_{a,p} \setminus U_{a,p}$ and its boundary lies on $U_{a,p}$. (See Figure \ref{Figure5-4-2}.)
		Then one of the following occurs:
		\begin{enumerate}
			\item $\Vert u'_a(z)\Vert = d_1$ for $z \in \partial D$ and $\Vert u'_a(z)\Vert \le d_1$ for $z \in D$.
			\item $\Vert u'_a(z)\Vert = d_{2,a}$ for $z \in \partial D$ and $\Vert u'_a(z)\Vert \ge d_{2,a}$ for $z \in D$.
		\end{enumerate}
		\begin{figure}[h]
			\centering
			\includegraphics[scale=0.5]{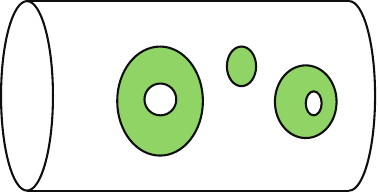}
			\caption{Subdomain D}
			\label{Figure5-4-2}
		\end{figure}
	Both cases are impossible, because of the maximum principle.	
%

	Let $\overline u'_a : [-T_{a,p},T_{a,p}] \times S^1 \to V_p$ be the composition of a conformal isomorphism from $[-T_{a,p},T_{a,p}] \times S^1$
	to $U_{a,p}$, the map $u'_a$ and 
	the projection $\mathcal N_{\mathcal D}(X) \to \mathcal D$. If $a$ is large enough, then we may assume that 
	the assumption of Lemma \ref{lem5170} is satisfied and hence we have the exponential decay result of this lemma.
	
	We next discuss almost complex structure of the pull back bundle 
	$(\overline u'_a)^*\mathcal N_{\mathcal D}(X)$. 
	The chosen unitary trivialization of $\mathcal N_{\mathcal D}(X)$ in a neighborhood 
	of $u(p)$ (which identifies this neighborhood with the direct product $D(1) \times V_p$) induces a 
	unitary trivialization of the bundle $(\overline u'_a)^*\mathcal N_{\mathcal D}(X)$.
	We may consider two connections on this bundle over $U_{a,p}$: the connection $\theta_0$ given by the trivialization and 
	the connection $\theta$ given by puling back the connection on  $\mathcal N_{\mathcal D}(X)$ that is used in the definition of 
	the almost complex structure $J$ on $X$ in a neighborhood of $\mathcal D$. (See Subsection \ref{subsec:sympproj}.)
	After fixing a large enough positive integer $k$ ($k=2$ will be sufficient for our purposes), we can apply Lemma \ref{lem5170} to obtain constants
	 $c$ and $\delta$ such that
	\begin{equation}\label{exp-dec-bound}
	  \sum_{\ell = 1}^{k}\vert \nabla^{\ell} (\theta - \theta_0)(\tau,t)\vert \le c e^{-\delta(T- \vert \tau\vert)}
	\end{equation}	 
	holds for any $(\tau,t)\in [-T_{a,p},T_{a,p}] \times S^1$. Here by letting $\epsilon$ go to zero and keeping $\delta$ fixed, 
	we may assume that $c$ is arbitrarily small.
	
	We use $\theta$ to obtain the structure of a holomorphic line bundle on 
	$(\overline u'_a)^*\mathcal N_{\mathcal D}(X)$, which we denote by $\mathcal L$.
	We denote by $\mathcal L_{0}$ the holomorphic line bundle $(\overline u'_a)^*\mathcal N_{\mathcal D}(X)$
	with direct product holomorphic structure, that is induced by $\theta_0$.
	We will construct an isomorphism of holomorphic line bundles 
	$g : \mathcal L_{0} \to \mathcal L$.
	Regarding $g$ as a $\bbC_{*}$-valued function on $[-T_{a,p},T_{a,p}] \times S^1$, we need to solve 
	\begin{equation}\label{eqfmu}
		\overline{\partial} g = (\theta_0 - \theta)^{(0,1)}g,
	\end{equation}
	In fact, if $(\theta_0 - \theta)^{(0,1)}=\alpha (d\tau-idt)$, then $g=\exp(f)$ for any solution $f$ of the equation
	\begin{equation}\label{CR-non-hg}
	  \frac{df}{d\tau}+i\frac{df}{dt}=2\alpha
	\end{equation}
	gives a solution of \eqref{eqfmu}. Lemma \ref{exp-dec-cr-non-hg} implies that there is a solution $f$ of \eqref{CR-non-hg} 
	such that the $C^k$ norm of $f$ over $[-T_{a,p}+1,T_{a,p}-1] \times S^1$ is bounded 
	by a constant $\epsilon'$ that depends only on the constants $C$ and $\delta$ in \eqref{exp-dec-bound}. In particular, 
	we have the following bound for $g=\exp(f)$:
	\begin{equation}\label{estimateg}
		\left\vert g - 1 \right\vert_{C^k([-T_{a,p}+1,T_{a,p}-1] \times S^1)} < \epsilon',
	\end{equation}
	after possibly increasing the value of $\epsilon'$.

The pseudo-holomorphic map $u'_a$ induces a holomorphic section $\hat u'_a : [-T_{a,p},T_{a,p}] \times S^1 \to \mathcal L$.
In particular, \[g^{-1} \cdot \hat u'_a: [-T_{a,p},T_{a,p}] \times S^1 \to \mathcal L_0\cong [-T_{a,p},T_{a,p}] \times S^1 \times \bbC\] is holomorphic, where the complex structure of the codomain is the direct product of the complex structures $ [-T_{a,p},T_{a,p}] \times S^1$ and $\bbC$. 
Since $g$ is a bundle isomorphism it preserves the $\bbC_{*}$ action and projection.
Although it does not preserve norm, we have the following inequality for any $v\in \mathcal L_0$:
\[1-\epsilon' < \vert g(v)/v\vert < 1+\epsilon'.\]
We define
\[
  U'_{a,b} = \left\{ z \in [-T_{a,p}+1,T_{a,p}-1] \mid (g^{-1} \circ \hat u'_a)(z) \in [(1+\epsilon')d_1,(1-\epsilon')d_{2,a}]\right\}.
\]
In the same way as before, $U'_{a,b}$ is conformal to an annulus $[-T'_{a,p},T'_{a,p}] \times S^1$.
	 
	Compose a conformal isomorphism from $[-T'_{a,p},T'_{a,p}] \times S^1$ to $U'_{a,p}$ with the map $g^{-1}\circ u_a'$ to define
	\[
	  u''_{a,p}=(u''_{a,p,1},u''_{a,p,2}) : [-T'_{a,p},T'_{a,p}] \times S^1 \to A((1+\epsilon')d_1,
	  (1-\epsilon')d_{2,a}) \times V_p.
	\]
	Lemma \ref{lem517} asserts that there are a positive integer $m_{a,p}$ and a complex number $z_{a,p}$
	such that
	\begin{equation}\label{formula-u-cyl-comp}
	  u''_{a,p,1}(\tau,t) = \exp (2\pi m_{a,p}(\tau + \sqrt{-1}t) - z_{a,p}).
	\end{equation}
	The convergence of $\zeta_a$ to $\zeta$ in the stable map topology implies that 
	$m_{a,j}$ is independent of $a$ for sufficiently large values of $a$. In fact, this common value, denoted by $m_p$, is 
	the order of tangency of $u(\Sigma(0))$ to $\mathcal D$ at $p$. Consequently, $\lim_{a \to \infty} d_{2,a} = \infty$ implies 
	that
	\[
	  \lim_{a\to \infty}T'_{a,p} = \infty. 
	\]

	The functions $g \cdot u''_{a,p}$ can be perturbed and extended into functions 
	$v_{a,p}:[-T_{a,p},\infty) \times S^1\to \bbC\times V_p$.
	Firstly let $\chi:\bbR \to [0,1]$ be a smooth function satisfying the following properties:
	\begin{itemize}
		\item[(i)] $\chi(\tau) = 1$ for $\tau \in (-\infty,0]$;
		\item[(ii)] $\chi(\tau) = 0$ on $\tau \in [1,\infty)$.
	\end{itemize}
	Then we define
	\[
	  v_{a,p}(\tau,t) =((\chi(\tau)(g-1)+1)u''_{a,p,1}(\tau,t) ,\chi(\tau)(u''_{a,p,2}(\tau,t) - u''_{a,p,2}(0,0)) + u''_{a,p,2}(0,0))
	\]
	if $(t,\tau)\in [-T'_{a,p},T'_{a,p}] \times S^1$, and
	\[
	  v_{a,p}(\tau,t) =(\exp (2\pi m_{a,p}(\tau + \sqrt{-1}t) - z_{a,p}),u''_{a,p,1}(\tau,t),u''_{a,p,2}(0,0))
	\]
	if $(t,\tau)\in [1,\infty) \times S^1$. We may regard $v_{a,p}$ as a map into ${\bf P}(\mathcal N_{\mathcal D}(X)\oplus \bbC)$	by identifying $\bbC\times V_p$ with the open subset of 
	$\mathcal N_{\mathcal D}(X) \subset {\bf P}(\mathcal N_{\mathcal D}(X)\oplus \bbC)$,
	given by fibers of $\mathcal N_{\mathcal D}(X)$ over $V_p$.
	Lemma \ref{lem517} and the shape of the symplectic form $\omega_{\bf P}$ on ${\bf P}(\mathcal N_{\mathcal D}(X)\oplus \bbC)$
	given in Subsection \ref{subsec:sympproj}. implies that 
	there is a positive constant $C$, independent of $a$, such that:
	\begin{equation}\label{cyl-ineq}
	  \int_{[0,1] \times S^1} (v_{a,p})^* \omega_{\bf P} > -C\hspace{1cm}
	  \int_{[0,1] \times S^1} |dv_{a,p}|^2 < C.
	\end{equation}
	where $[0,1] \times S^1\subset [-T_{a,p},\infty) \times S^1$ is the cylinder where the map $u''_{a,p,2}$ 
	is not necessarily holomorphic anymore.
	The norm of the differential $dv_{a,p}$ is defined using the metric associated to $\omega_{\bf P}$ and $J_{\bf P}$.
	
	Suppose $\Sigma(a;1)$ is a sphere given by gluing the cylinders $[-T_{a,p},\infty) \times S^1$ to $K_a^+(0)$ and then adding 
	a point for each $p$. The maps $u_a'$ and $v_{a,p}$ can be used to define a map
	\[
	  u'''_{a} : \Sigma(a;1) \to {\bf P}(\mathcal N_{\mathcal D}(X)\oplus \bbC).
	\]
	The map $u'''_{a}$ is equal to $u_a'$ on the subspace of $K_a^+(0)$ where $\Vert u_a' \Vert$ is 
	at most $d_1$ and is equal to $v_{a,p}$ on the cylinder $[-T_{a,p},\infty) \times S^1$. In particular, $u'''_{a}$
	is holomorphic except on the cylinders $[0,1]\times S^1 \subset [-T_{a,p},\infty) \times S^1$. 

	
	The convergence of $\zeta_a$ to $\zeta$ in the stable map topology implies that $\pi\circ u'''_{a} $ is convergent to 
	$u|_{\Sigma'(0)}$. This observation and the behavior of $u'''_{a}$ on each cylinder $[-T_{a,p},\infty) \times S^1$
	allows us to conclude that the maps $u'''_{a}$
	represent the same homology class in ${\bf P}(\mathcal N_{\mathcal D}(X)\oplus \bbC)$. Thus we can use 
	\eqref{cyl-ineq} to conclude that there is a uniform constant $M$ such that
	\[
	  \int_{\Sigma'(a,1)} |du'''_{a}|^2<M
	\]
	where $|\cdot |$ is defined with respect to $\omega_{\bf P}$ and $J_{\bf P}$.
	Gromov compactness implies that after passing to a subsequence, the sequence $(\Sigma(a;1),u'''_{a,1})$ 
	together with the marked points converges as a stable map. 
	Let $u_{\infty,1} : \Sigma(\infty,1) \to {\bf P}(\mathcal N_{\mathcal D}(X) \oplus \bbC)$ be the limit.

	Any irreducible component $\Sigma(\infty,1)_h$ of $\Sigma(\infty,1)$ such that $u_{\infty,1}(\Sigma(\infty,1)_h)$ is not 		
	contained in a fiber of ${\bf P}(\mathcal N_{\mathcal D}(X) \oplus \bbC)$ corresponds to an irreducible component of 
	$\Sigma_w$ with $w\in G_{>0}$. This follows from convergence of $\zeta_a$ to $\zeta$ in the stable map topology. 
	Therefore, if $\Sigma(\infty,1)_h$ does not correspond to an irreducible component of $\Sigma$, then $(\pi \circ u_{\infty,1})(\Sigma(\infty,1)_h)$ 
	should be a point where $\pi : {\bf P}
	(\mathcal N_{\mathcal D}(X) \oplus \bbC) \to \mathcal D$ is the projection map.
	Convergence of $\zeta_a$ to $\zeta$ in the stable map topology also implies that any irreducible component $\Sigma_w$ with $w\in G_{>0}$ is in
		correspondence with a unique component $\Sigma(\infty,1)_h$ of $\Sigma(\infty,1)$.

%
	Let $\Sigma(\infty,1)_h$ be an irreducible component of $\Sigma(\infty,1)$ with $u_{\infty,1}(\Sigma(\infty,1)_h)$ being not contained in a fiber of 
	${\bf P}(\mathcal N_{\mathcal D}(X) \oplus \bbC)$. Let $\Sigma_w$, for $w\in G_{>0}$, be the corresponding component. 
	We also assume that $u_{\infty,1}(\Sigma(\infty,1)_h)$
	is not included in the zero section $\mathcal D_0$ of ${\bf P}(\mathcal N_{\mathcal D}(X) \oplus \bbC)$. Then we 
	define the level of $w$ to be $1$. We can also use the restriction of $u_{\infty,1}$ to 
	$\Sigma_w$ to define a meromorphic section $s_w$ of $\mathcal N_{\mathcal D}(X)$.
	Let $G_1$ be the set of all elements $w$ of $G$ with $\lambda(w)=1$. 
	Our choices of the constants $\rho_{a,1}$ guarantee that $G_1$ is not an empty set.
	
	We also write $G_{>1}$ for $G_{>0}\setminus G_1$.
	If $w\in G_{>1}$, then there is a corresponding component $\Sigma(\infty,1)_h$ of $\Sigma(\infty,1)$ whose image under $u_{\infty,1}$
	is contained in the section $\mathcal D_0$. Let $w'\in G_1$ be chosen such that $\Sigma_{w}$ 
	 and $\Sigma_{w'}$ share a nodal point $p$. Then the image of $\Sigma_{w'}$ by the map $u_{\infty,1}$ 
	 intersects $\mathcal D_0$ at the point $p$. The value of the multiplicity function $m$ at the point $p$ is defined to be the 
	 order of tangency of this intersection. Thus we partially obtained the required objects in (I)-(IV).
	 We repeat a similar construction to obtain a partition of $G_{>0}$ as follows:
	 \[
	   G_{>0}=G_1\sqcup G_2 \sqcup \dots
	 \]
	Since each $G_i$ is non-empty, this process terminates in finitely many steps. Therefore, we construct the objects claimed in 
	(I)-(IV). Using them, it is straightforward to construct an element $\hat \zeta \in \mathcal M^{\rm RGW}_{k+1,h}(L;\beta)$
	such that $\zeta_a$ are convergent to $\hat \zeta$.
\end{proof}


\begin{lemma}\label{exp-dec-cr-non-hg}
	For any integer $k$, there is a constant $c$ such that the following claim holds. Suppose $h:[-T,T]\times S^1\to \bbC$ satisfies
	\begin{equation}\label{exp-dec-bound-h}
	  \sum_{\ell = 1}^{k}\vert \nabla^{\ell} h(\tau,t)\vert \le C e^{-\delta(T- \vert \tau\vert)}.
	\end{equation}	 
	Then there is a function $f:[-T,T]\times S^1\to \bbC$ such that 
	\begin{equation}\label{CR-non-hg-2}
		\frac{df}{d\tau}+i\frac{df}{dt}=h
	\end{equation}
	and 
	\begin{equation}\label{exp-dec-bound-f}
		\sum_{\ell = 1}^{k}\vert \nabla^{\ell} f(\tau,t)\vert \le c\cdot C^2(1+\frac{1}{\delta^2}),
	\end{equation}	
	for any $\tau\in [-T+1,T-1]$.
\end{lemma}
\begin{proof}
	Throughout the proof $c$ is a positive constant which might increase from each line to the next one.
	Suppose that we have the following Fourier series presentation for the function $h$:
	\[
	  h(\tau,t)=\sum_{n=-\infty}^\infty \varphi_n(\tau)e^{2\pi i n t}.
	\]
	The assumption implies that for any $\tau$, we have
	\begin{equation}\label{L2-bound}
	  \sum_{n=-\infty}^\infty \vert\varphi_n(\tau)\vert^2\leq c\cdot C^2 e^{-2\delta(T- \vert \tau\vert)}
	\end{equation}
	Then we can take the function $f$ to be
	\[
	  f(\tau,t)=\sum_{n=-\infty}^\infty \psi_n(\tau)e^{2\pi i n t}
	\]
	where 
	\[
	  \psi_n(\tau):=\left\{
	  \begin{array}{ll}
		\displaystyle-\int_{\tau}^T e^{2\pi n(\tau-s)}\varphi_n(s)ds&n>0,\\
		\\
	  	\displaystyle\int_0^\tau\varphi_0(s)ds&n=0,\\
		\\
		\displaystyle\int_{-T}^\tau e^{2\pi n(\tau-s)}\varphi_n(s)ds&n<0.\\
	  \end{array}
	  \right.
	\]
	One can easily see that for non-zero integer $n$, we have
	\[
	  \Vert \psi_n\Vert_{L^2([-T,T])}^2\leq \frac{1}{4\pi |n|}\Vert \varphi_n\Vert_{L^2([-T,T])}^2,\hspace{1cm}
	\]
	and as a consequence of \eqref{L2-bound}, we have
	\[
	  	  \Vert \psi_0\Vert_{L^2([-T,T])}^2\leq c_0\frac{C^2}{\delta^2}.  
	\]
	In summary, we have
	\[
	  \Vert f \Vert_{L^2}\leq c_0C^2(1+\frac{1}{\delta^2})
	\]
	Now for any point $(\tau,t)\in [-T+1,T-1]$, the cylinder $(-\tau-1,\tau+1)\times S^1$ is a subspace of $[-T,T]\times S^1$, and we can use elliptic 
	regularity to obtain the desired claim.
\end{proof}

\subsubsection{RGW Topology : Strips and Spheres}
\label{subsub:otheropirr}

In this part, we give the definition of the RGW topology in several 
other cases. Since the definition is similar to the case of discs, we 
skip the details of the construction.

We first consider the case of strips. Let $L_0$, $L_1$ be Lagrangian submanifolds of $X \setminus D$ which intersect transversally. Let $p,q \in L_0\cap L_1$, and form $\mathcal M_{k_1,k_0}^{\rm reg}(L_1,L_0;p,q;\beta;\emptyset)$ as in Definition \ref{defn33strip}. We also consider the case that the pseudo-holomorphic strips have $h$ interior marked points and denote the resulting moduli space by $\mathcal M_{k_1,k_0;h}^{\rm reg}(L_1,L_0;p,q;\beta;\emptyset)$. The stable map compactification of this space, denoted by $\mathcal M_{k_1,k_0;h} (L_1,L_0;p,q;\beta;\emptyset)$, is defined in \cite[Subsection 3.8.8]{fooobook}. 
\begin{figure}[h]
\centering
\includegraphics[scale=0.5]{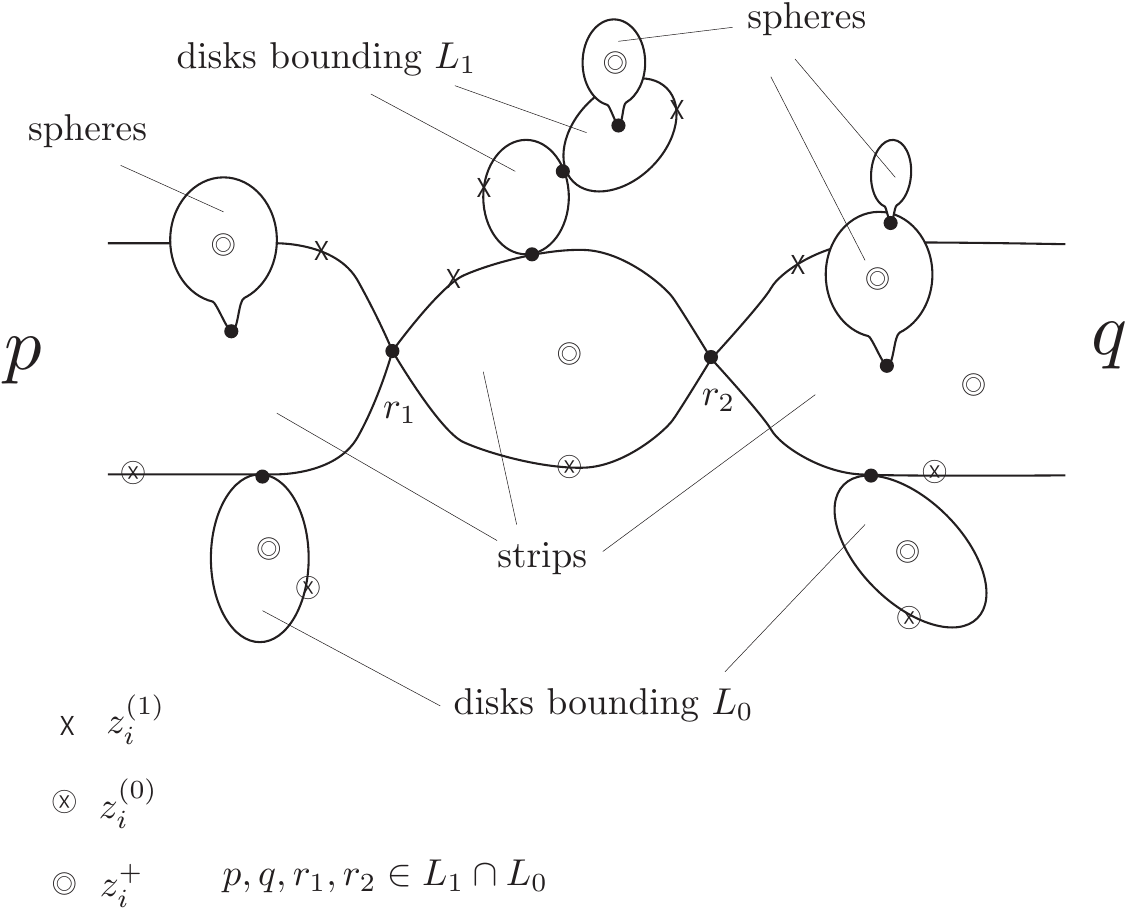}
\caption{$((\Sigma,\vec z^{(1)},\vec z^{(0)},\vec z^+),u)$}
\label{5figurestrip}
\end{figure}

An element of the compactification is represented by $((\Sigma,\vec z^{(1)},\vec z^{(0)},\vec z^+),u)$ where $\Sigma$ is the union of a line of strips, trees of disks attached to the sides of the strips, and trees of spheres attached to the interior of strips or disks. (See Figure \ref{5figurestrip}.) The points $\vec z^{(1)} = (z^{(1)}_1,\dots,z^{(1)}_{k_1})$ are boundary marked points on one side of the boundary of strips, and $\vec z^{(0)} = (z^{(0)}_1,\dots,z^{(0)}_{k_0})$ are boundary marked points on the other side of the boundary of strips. (See Figure \ref{5figurestrip}.) Moreover, $\vec z^+$ is an $h$-tuple of interior marked points, which lie on the interior of the disks, the strips and the spheres of $\Sigma$. Such configurations are described by an SD-tree $\mathcal R$ as in Figure \ref{FiguregraphR}.
The only difference is that we also need to include a marking map 
${\rm mk} : \{1,\dots,h\} \to C^{\rm ins}_0(\mathcal R)$.
As in the previous subsection, we denote the pair $(\mathcal R,{\rm mk})$ by $\mathcal R^+$. The definition of the stable map topology is essentially the same as Definitions \ref{defn55} and \ref{defn5656}, and we omit it here.

\begin{shitu}\label{situ51212rev}
        We consider the following situation.
        \begin{enumerate}
                \item
                $\zeta_a = ((\Sigma(a),\vec z^{(1)}(a),\vec z^{(0)}(a),\vec z^+(a)),u_a) 
                \in \mathcal M_{k_1,k_0;h}^{\rm reg}
                (L_1,L_0;p,q;\beta;\emptyset)$.
                \item $\zeta = (\zeta(v);v \in C_0^{\rm int}(\hat R)) \in {\mathcal M}^{0}(\mathcal R^+)
                \subset\mathcal M^{\rm RGW}_{k_1,k_0;h}(L_1,L_0;p,q;\beta)$.
                with the following components: 
       		\begin{enumerate}
                        \item If $c(v)={\rm str}$, then
                        		$\zeta(v) \in \mathcal M_{k_1,k_0;h_v}^{\rm reg} (L_1,L_0;r(v),r'(v);\beta;{\bf m}_+^v)$.
	                        Here $r(v)$, $r'(v) \in L_1\cap L_0$ are points assigned to the two edges of $C$ containing $v$.
			\item If  $c(v)={\rm d}_j$ ($j=0,1$), then 
				$\zeta(v)\in \mathcal M_{k+1,{h_v}}^{\rm reg, d}(L_j;\alpha(v);{\bf m}_+^v)$.
				We write $\zeta(v)= ((\Sigma(v),\vec z(v),\vec z^+(v),\vec w(v)),u_v)$.
                        \item If $c(v)=\rm s$, then $\zeta(v)$ is an element of $\mathcal M_{h_v}^{\rm reg, s}(\alpha(v);{\bf m}^v)$.
	                        We write $\zeta(v)= ((\Sigma(v),\vec z(v),\vec z^+(v),\vec w(v)),u_v)$.
			\item If $c(v)=\rm{D}$, then 
                        		$\zeta(v) \in \widetilde{\mathcal M}^{0}_{h_v}(\mathcal D\subset X;\beta(v);{\bf m}^v)$.
	                        We write $\zeta(v) = ((\Sigma(v),\vec z^+(v),\vec w(v));u_v;s_v)$.	
                \end{enumerate}
                \item We assume 
                		\[
	                  \lim_{a\to \infty} \frak{forget}(\zeta_a) = \frak{forget}(\zeta).
			\]
			Here the convergence is by the stable map topology.
                \item We assume $\frak{forget}(\zeta_a)$ and $\frak{forget}(\zeta)$ are source stable.
        \end{enumerate}
\end{shitu}
We take an $\epsilon$-trivialization $(K,\mathcal U,\Phi)$ in the same way as in Definition \ref{subsec:RGW1def} and obtain the map $u'_{a,v}$ in the same way as in \eqref{511form} for $a \in \bbZ_+$ and a vertex $v$ with color $\text{D}$. We can also define a map $U_v : \Sigma_v\setminus \vec w(v) \to \mathcal N_{\mathcal D}(X)\setminus \mathcal D$ for any such vertex $v$ in the same way as in \eqref{512form}. Now we define the notion of convergence ${\rm lims}_{a \to \infty}\,\zeta_a = \zeta$ analogous to Definition \ref{defn513}, and generalize this definition as in  Definition \ref{defn515} to define ${\rm lim}_{a \to \infty}\,\zeta_a = \zeta$ using forgetful maps. Finally we can prove the analogue of Proposition \ref{prop516} for strips using a similar argument.

We next consider $\mathcal M_{h}^{\rm reg, s}(\alpha;\emptyset)$ for $\alpha \in \Pi_2(X)$. We did not define the RGW compactification of this space in Section \ref{Sec:RGW-Compactification}. However, by following an essentially the same construction as in the definition of $\mathcal M_{k+1,h}^{\rm  RGW}(L;\beta)$, we can define this RGW compactification denoted by $\mathcal M_{h}^{\rm  RGW}(\alpha;\emptyset)$. The main difference is that the root vertex has color $\rm s$ instead of $\rm d$. The definition of the RGW topology is also similar.
We can prove an the analogue of Proposition \ref{prop516} in this case, too.

We finally define the RGW convergence of a sequence of elements of $\mathcal M^0(\mathcal D\subset X;\alpha;\emptyset)$ to an element of $\mathcal M(\mathcal D\subset X;\alpha;\emptyset)$. Let $X'={\bf P}(\mathcal N_{\mathcal D}(X) \oplus \bbC)$ and $\mathcal D'=\mathcal D_0\cup \mathcal D_\infty$. Then $\mathcal M^0(\mathcal D\subset X;\alpha;\emptyset)$ (resp. $\mathcal M(\mathcal D\subset X;\alpha;\emptyset)$) can be identified with the quotient of the moduli space $\mathcal M^{\rm reg, s}(\hat \alpha;\emptyset)$ associated to the pair $(X',\mathcal D')$ (resp. $\mathcal M^{\rm RGW}(\hat \alpha;\emptyset)$ associated to the pair $(X',\mathcal D')$) with respect to the obvious $\bbC_{*}$ action. Here $\hat \alpha$ is defined as in Subsection \ref{subsec:nbdofdivisor}. Given a sequence $\zeta_a\in\mathcal M^0(\mathcal D\subset X;\alpha;\emptyset)$, represented by elements $\widetilde \zeta_a \in\mathcal M^{\rm reg, s}(\hat \alpha;\emptyset)$, we say $\zeta_a$ converges to $\zeta\in\mathcal M(\mathcal D\subset X;\alpha;\emptyset)$, represented by $\widetilde \zeta \in\mathcal M^{\rm RGW}(\hat \alpha;\emptyset)$, if there are complex numbers $z_a$ such that $z_a\cdot \zeta_a$ converges to $\zeta$ with respect to the notion of the convergence of the last paragraph. As in the previous cases, an analogue of Proposition \ref{prop516} holds in this case. We can also generalize the discussion of this paragraph to the case that we include interior marked points and define the moduli space $\mathcal M_h(\mathcal D\subset X;\alpha;\emptyset)$.

\begin{remark}
	In all three cases that we have discussed so far, we can replace $\emptyset$ with $\bf m$ without much change. That is to say, we can discuss convergence of a sequence of holomorphic maps which 
	intersects the divisor in a prescribed way in all three cases and prove the analogues of Proposition \ref{prop516}.
\end{remark}

\subsubsection{RGW topology: General Case}
\label{subsec:RGW2def}
We are ready to define the RGW convergence in the general case. We focus on the moduli space $\mathcal M_{k+1,h}^{\rm  RGW}(L;\beta)$. A similar discussion applies to the case of strips.

Suppose $\mathcal R^+ = (\mathcal R,{\rm mk})$ and $(\mathcal R')^+ = (\mathcal R',{\rm mk}')$ are DD-ribbon trees with interior marked points. We can define level shrinking and level $0$ edge shrinking of such DD-ribbons as in Section \ref{Sec:RGW-Compactification}. We write $(\mathcal R')^+\geq \mathcal R^+$ if $(\mathcal R')^+$ is obtained from $\mathcal R^+$ by finitely many iterations of these operations. 


\begin{lemma}\label{lem5421}
	Suppose $(\mathcal R')^+\geq \mathcal R^+$, and $\vert \lambda\vert$, $\vert \lambda'\vert$ are the total numbers of levels of $\mathcal R^+$, $(\mathcal R')^+$. 
	Suppose $\hat R,\, \hat R'$ denote the detailed trees of $\mathcal R^+,\,(\mathcal R')^+$.
	Then there are a surjective and non-decreasing map 
	${\rm levsh} : \{0,1,\dots,\vert \lambda\vert\} \to \{0,1,\dots,\vert 	\lambda'\vert\}$
        and a surjective simplicial map ${\rm treesh} : \hat R \to  \hat R'$
        with the following properties:
        \begin{enumerate}
                \item If $v \in C^{\rm int}_0(\hat R)$, then
	                \[
			  \lambda'({\rm treesh}(v)) = {\rm levsh}(\lambda(v)).
			\]
                \item The inverse image of each vertex by ${\rm treesh}$ is connected.
                \item Let $v \in C^{\rm int}_0(\hat R')$ be a vertex of level $0$. If ${\rm treesh}^{-1}(v)$ 
	                contains a vertex of color $\rm d$, then the color of $v$ is $\rm d$. Otherwise, the color of $v$ is $\rm s$.
                \item ${\rm treesh}$ is bijective on the subset of exterior vertices and exterior edges.
        \end{enumerate}
\end{lemma}
\begin{proof}
        This is obvious in the case of $(i,i+1)$ level shrinking
        and shrinking of a single level $0$ edge. 
        For a composition of level shrinking and level $0$ edge shrinking operations, we can also use the composition of the corresponding maps ${\rm treesh}$
        and ${\rm levsh}$. Moreover, the properties in (1)-(4) 
        are preserved by the composition. 
\end{proof}
Now we consider the following situation:
\begin{shitu}\label{sisisi5.22}
	Suppose $(\mathcal R')^+ \ge \mathcal R^+$ and ${\rm treesh}$,
	${\rm levsh}$ are as in Lemma \ref{lem5421}. Let $\zeta_a \in \mathcal M^0((\mathcal R')^+)$ be a sequence and 
	$\zeta \in \mathcal M^0(\mathcal R^+)$. Let $\widetilde{\zeta}_a \in \widetilde{\mathcal M}^0((\mathcal R')^+)$
	and $\widetilde{\zeta} \in \widetilde{\mathcal M}^0(\mathcal R^+)$
	denote elements representing $\zeta_a$ and $\zeta$, respectively. We write
	\[
	   \widetilde{\zeta} = (\zeta(v)),\quad \widetilde{\zeta}_a = (\zeta_{a}(v')),
	\]
	where $v$ belongs to $C_0^{\rm int}(\hat R)$ (resp. $v'$ belongs to $C_0^{\rm int}(\hat R')$)
	and $\zeta(v) \in \widetilde{\mathcal M}^0(\mathcal R^+,v)$
	(resp. $\zeta_{a}(v') \in \widetilde{\mathcal M}^0((\mathcal R')^+,v')$).
	We require that $\frak{forget}({\zeta}_a)$ converges to $\frak{forget}({\zeta})$ 
	in the stable map topology. Furthermore, we assume ${\zeta}_a$, ${\zeta}$ are source stable.
\end{shitu}
For ${\zeta}_a$ and ${\zeta}$ as in Situation \ref{sisisi5.22}, we wish to explain when ${\zeta}_a$ converges to ${\zeta}$ in the RGW topology. We firstly need to introduce some notations:
\begin{enumerate}
	\item
            	For $v\in C_0^{\rm int}(\hat R)$, if $c(v)=\rm{d}$, then 
            	$\zeta(v) \in \mathcal M_{k+1,{h_v}}^{\rm reg, d}(\alpha(v);{\bf m}^v)$, and
            	we write $\zeta(v)= ((\Sigma(v),\vec z(v),\vec z^+(v),\vec w(v)),u_v)$.
            	For $v'\in C_0^{\rm int}(\hat R')$, if $c(v')=\rm{d}$, then 
            	$\zeta_a(v') \in \mathcal M_{k+1,{h_{v'}}}^{\rm reg, d}(\alpha(v');{\bf m}^{v'})$, and
            	we write $\zeta_a(v')= ((\Sigma_a(v'),\vec z_a(v'),\vec z_a^+(v'),\vec w_a(v')),u_{a,v'})$.
	\item For $v\in C_0^{\rm int}(\hat R)$, if $c(v)=\rm{s}$, then 
            	$\zeta(v) \in \mathcal M_{h_v}^{\rm reg, s}(\alpha(v);{\bf m}^v)$, and
            	we write $\zeta(v)= ((\Sigma(v),\vec z(v),\vec z^+(v),\vec w(v)),u_v)$.
            	For $v'\in C_0^{\rm int}(\hat R')$, if $c(v')=\rm{s}$, then 
            	$\zeta_a(v') \in \mathcal M_{h_{v'}}^{\rm reg, s}(\alpha(v');{\bf m}^{v'})$, and
            	we write $\zeta_a(v')= ((\Sigma_a(v'),\vec z_a(v'),\vec z^+_a(v'),\vec w_a(v')),u_{a,v'})$.
	\item For $v\in C_0^{\rm int}(\hat R)$, if $c(v)=\rm{D}$, then 
		$\zeta(v) \in \widetilde{\mathcal M}^{0}_{h_v}(\mathcal D\subset X;\beta(v);{\bf m}^v)$, and
            	we write $\zeta(v) = ((\Sigma(v),\vec z^+(v),\vec w(v));u_v;s_v)$.
            	For $v'\in C_0^{\rm int}(\hat R')$, if $c(v')=\rm{D}$, then 
            	$\zeta_a(v')$ is an element of $\widetilde{\mathcal M}^{0}_{h_{v'}}(\mathcal D\subset X;\beta(v');{\bf m}^{v'})$, and
            	we write $\zeta_a(v') = ((\Sigma_a(v'),\vec z_a^+(v'),\vec w_a(v'));u_{a,v'};s_{a,v'})$.
	\item For $v' \in C_0^{\rm int}(\hat R')$, we define:
		\[
		  \hat R(v') = {\rm treesh}^{-1}(v') \subset \hat R.
		\]
\end{enumerate}

For a sufficiently small $\epsilon$, we take an $\epsilon$-trivialization $(K,\mathcal U,\Phi)$ of the universal family at the source curve $\xi$ of $\zeta$ in the sense of Definition \ref{defn55}. For $v \in C_0^{\rm int}(\hat R)$, we define
$$
K(v) = K \cap \Sigma(v).
$$

Suppose $v \in \hat R(v')$ with $v' \in C_0^{\rm int}(\hat R')$. If $a$ is large enough, then we may regard $K(v) \subset \Sigma_a(v')$ by $z \mapsto \Phi(z,\xi_a)$, where $\xi_a$ is the source curve of $\frak{forget}(\zeta_a)$. (See Figure \ref{Figure5-5}.) By Definition \ref{defn55} (3), $\tilde u_{a,v}(z) := u_{a,v'}(\Phi(z,\xi_a))$ converges to $u_v$ in $C^2$ topology on $K(v)$. We denote by $\tilde u_{a,v}$ the restriction of $\tilde u_{a,v'}$ to $K(v)$.

\begin{figure}[h]
\centering
\includegraphics[scale=0.3]{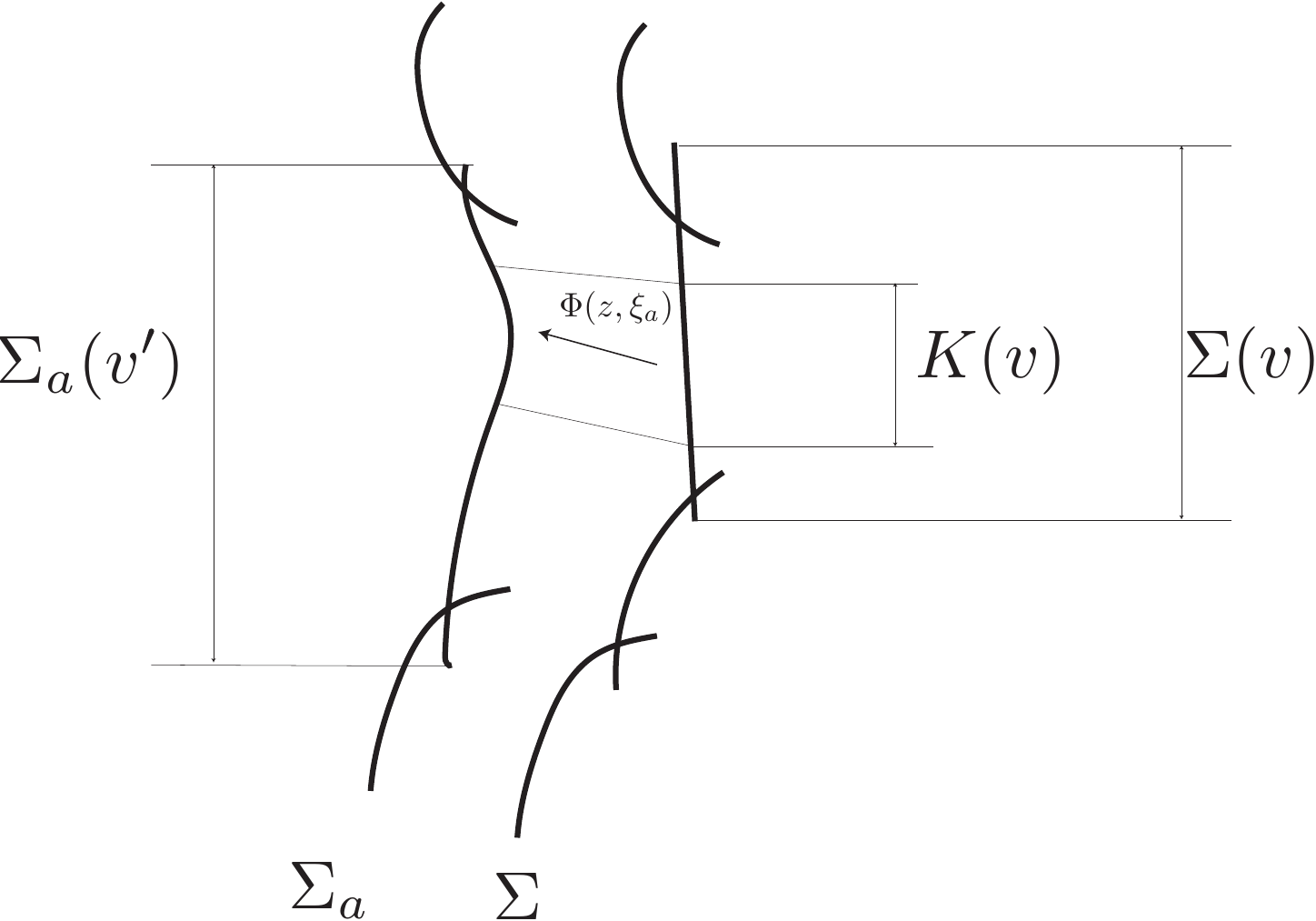}
\caption{$K(v)$ and $\Sigma_a(v')$}
\label{Figure5-5}
\end{figure}

Suppose $c(v)=\rm{D}$ and $c(v')={\rm s}$ or ${\rm d}$. Then for sufficiently large $a$, we may assume that for any $z$ in $K(v)$, we have $\tilde u_{a,v}(z) \in \mathcal N_{\mathcal D}^{\leq c}(X)$ in the same sense as in \eqref{inclusion}. Thus $\tilde u_{a,v}$ may be regarded as a map of the following form
\begin{equation}\label{512-0}
	\tilde u_{a,v} : K(v) \to \mathcal N_{\mathcal D}(X).
\end{equation}
We also regard the section $s_v$ of $u_v^*\mathcal N_{\mathcal D}(X)$ as a map
\begin{equation}\label{512form22}
	U_v : \Sigma_v\setminus \vec w(v) \to \mathcal N_{\mathcal D}(X)\setminus \mathcal D
\end{equation}
\par
If $c(v')=\rm{D}$, then
we use $s_{a,v'}$ and $\tilde u_{a,v}$ to obtain 
\begin{equation}\label{512form22-2}
	\widetilde U_{a,v} : K(v) \to \mathcal N_{\mathcal D}(X).
\end{equation}
Note that in this case $c(v)=\rm{D}$ 
automatically.
\begin{definition}\label{defn523}
	We say that $\zeta_a$ converges to $\zeta$ and write 
	\[
	  \underset{a \to \infty}{\rm lims}\,\zeta_a = \zeta,
	\]
	if for each $j \in \{1,\dots,\vert \lambda\vert\}$, there is a sequence $\rho_{a,j} \in \bbC_{*}$  and for each 
	$\epsilon>0$, there exist an $\epsilon$-trivialization as above and an integer $N(\epsilon)$
	such that the following properties hold for any $v\in C_0^{\rm int}(\hat R)$ and $a\geq N(\epsilon)$:
	\begin{enumerate}
            	\item Suppose $c(v)=\rm{D}$ and $c(v')={\rm s}$ or ${\rm d}$ with ${\rm treesh}(v) = v'$.
            		Then: 
            		\[
            		  d_{C^2}\left({\rm Dil}_{1/\rho_{a,\lambda(v)}}\circ \tilde u_{a,v},U_v\right) < \epsilon.
            		\]
            	\item Suppose $c(v)=\rm{D}$ and $c(v')=\rm{D}$ with ${\rm treesh}(v) = v'$. Then:
            		\[
            		  d_{C^2}\left({\rm Dil}_{1/\rho_{a,\lambda(v)}}\circ \widetilde U_{a,v},U_v\right) < \epsilon.
            		\]
            	\item If $j < j'$ and ${\rm levsh}(j) = {\rm levsh}(j')$, then 
            		\[
            		  \lim_{a \to \infty} \frac{\rho_{a,j}}{\rho_{a,j'}} = \infty.
            		\]
	\end{enumerate}
\end{definition}
This definition is very similar to Definition \ref{defn513}. The only difference is $\zeta_a \in {\mathcal M}^0(\mathcal R')$ and it may have several levels. We need to define convergence for each level. In Definition \ref{defn523}, we use the identification in \eqref{form513} and the product metric on $\mathcal N_{\mathcal D}(X) \setminus \mathcal D$ to define $C^2$ norms in Items (1) and (2).

Analogous to Definition \ref{defn515}, we can extend the definition of convergence to the case that the source curves of $\zeta_a$ and $\zeta$ may not be stable.
Finally we can include the case when $\zeta_a \in {\mathcal M}^0(\mathcal R'_a)$ where 
$\mathcal R'_a$ varies, using the fact that there is only a 
finite number of $\mathcal R'$ with $\mathcal R' \ge \mathcal R$.
This completes the definition of convergence with respect to the RGW topology. If $\zeta_a$ converges to $\zeta$ in this topology, we write:
\begin{equation}\label{form517}
	\lim_{a \to \infty} \zeta_a = \zeta.
\end{equation}

\begin{lemma}\label{prop516rev}
	For any sequence $\zeta_a \in \mathcal M^{\rm RGW}_{k+1,h}(L;\beta)$, there exists a subsequence which converges in the sense of \eqref{form517}.
\end{lemma}

\begin{proof}
This is a consequence of Proposition \ref{prop516} and similar 
results for strips and spheres.
\end{proof}

\begin{definition}
	Let $A \subset \mathcal M^{\rm RGW}_{k+1,h}(L;\beta)$.
	Define the closure of $A$, denoted by $A^c$, to be the set of all the limits of sequences
	of elements of $A$ in the sense of  \eqref{form517}.
\end{definition}

\begin{lemma}\label{Kurarev}
	The closure operator $c$ satisfies the Kuratowsky's axioms. 
	Namely, 
	{\rm (a)} ${\emptyset}^c = \emptyset$,
	{\rm (b)} $A \subseteq A^c$,
	{\rm (c)} $(A^c)^c =  A^c$ and
	{\rm (d)} $(A\cup B)^c = A^c \cup B^c$.
\end{lemma}
\begin{proof}
	(a), (b) and (d) are obvious.
	In order to check (c), let $\zeta_{a,b}, \zeta_a, \zeta \in \mathcal M^{\rm RGW}_{k+1,h}(L;\beta)$
        for $a,b \in \bbZ_+$. We assume
        \begin{equation}\label{form51818}
        \lim_{b\to\infty}\zeta_{a,b} = \zeta_a,
        \qquad
        \lim_{a\to\infty}\zeta_{a} = \zeta.
        \end{equation}
        It suffices to prove that there exists $b(a)$ such that
        $\lim_{a\to \infty} \zeta_{a,b(a)} = \zeta$.

	Using a result similar to Lemma \ref{lem57} (which can be proved in the same way),
	we may assume that $\zeta_{a,b}, \zeta_a, \zeta$ are 
	all source stable and replace $\lim$ by  ${\rm lims}$. Since for each DD-ribbon tree $\mathcal R^+$, there 
	are only finitely many DD-ribbon trees $(\mathcal R')^+$ with $(\mathcal R')^+ \ge \mathcal R^+$, 
	we may also assume that  there are DD-ribbon trees $\mathcal R, \mathcal R', \mathcal R''$ such that
	$\mathcal R'' \ge \mathcal R' \ge \mathcal R$ and 
	\[
	  \zeta_{a,b} \in \mathcal M^0(\mathcal R''),\quad\zeta_{a} \in \mathcal M^0(\mathcal R'),
	  \quad\zeta \in \mathcal M^0(\mathcal R).
	\]
	Moreover, Lemma \ref{lem5421} provides us with ${\rm levsh} : \{0,\dots,\vert \lambda\vert\} \to 
	\{0,\dots,\vert \lambda'\vert\}$,
	${\rm levsh}' : \{0,\dots,\vert \lambda'\vert\} \to \{0,\dots,\vert \lambda''\vert\}$,
	${\rm treesh} : \hat R \to  \hat R'$ and 
	${\rm treesh}' : \hat R' \to  \hat R''$.
	
        \begin{figure}[h]
        \centering
        \includegraphics[scale=0.4]{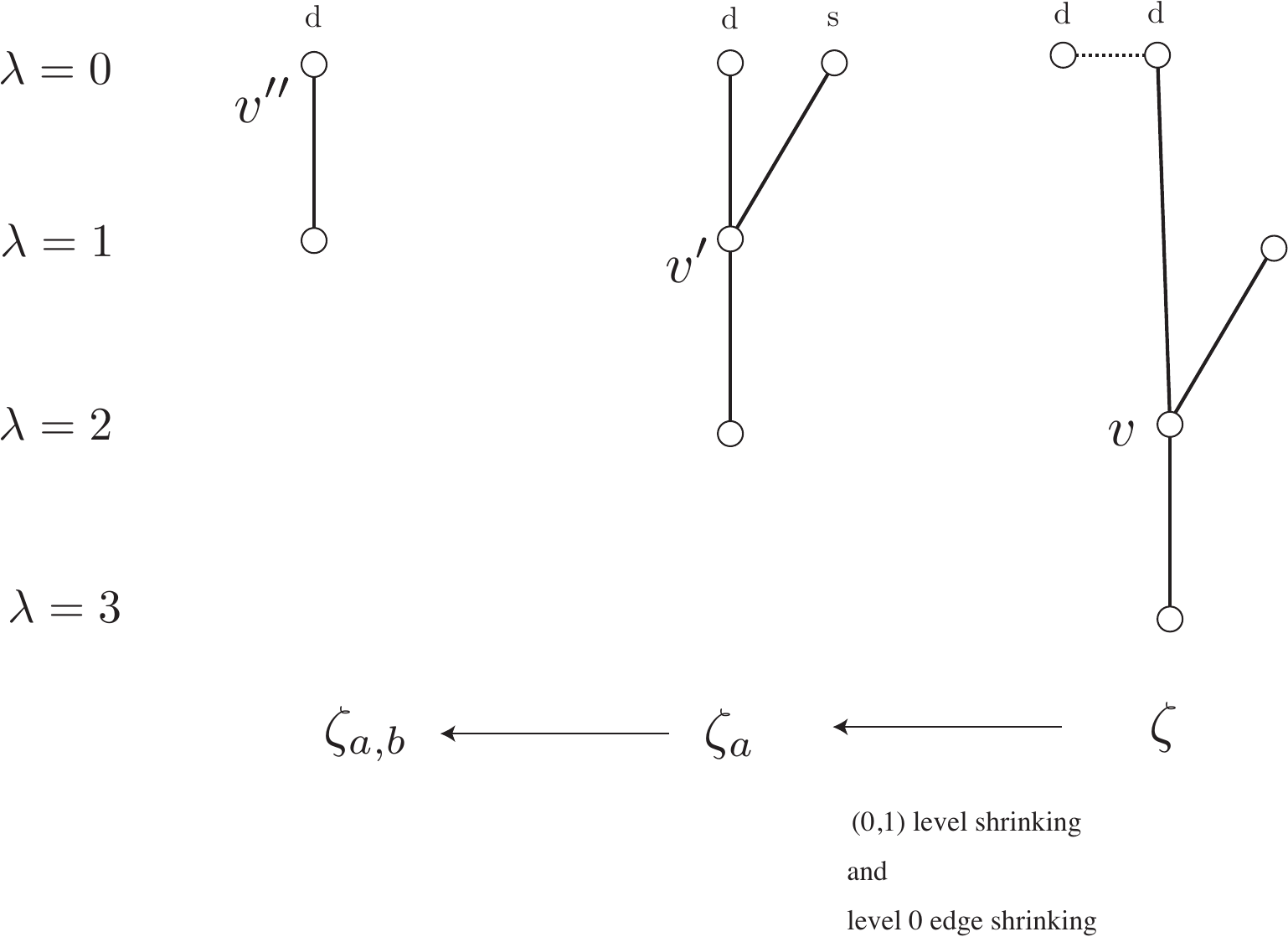}
        \caption{$\zeta_{a,b}, \zeta_{a}, \zeta$ (graph)}
        \label{Figure5-6}
        \end{figure}
        \begin{figure}[h]
        \centering
        \includegraphics[scale=0.4]{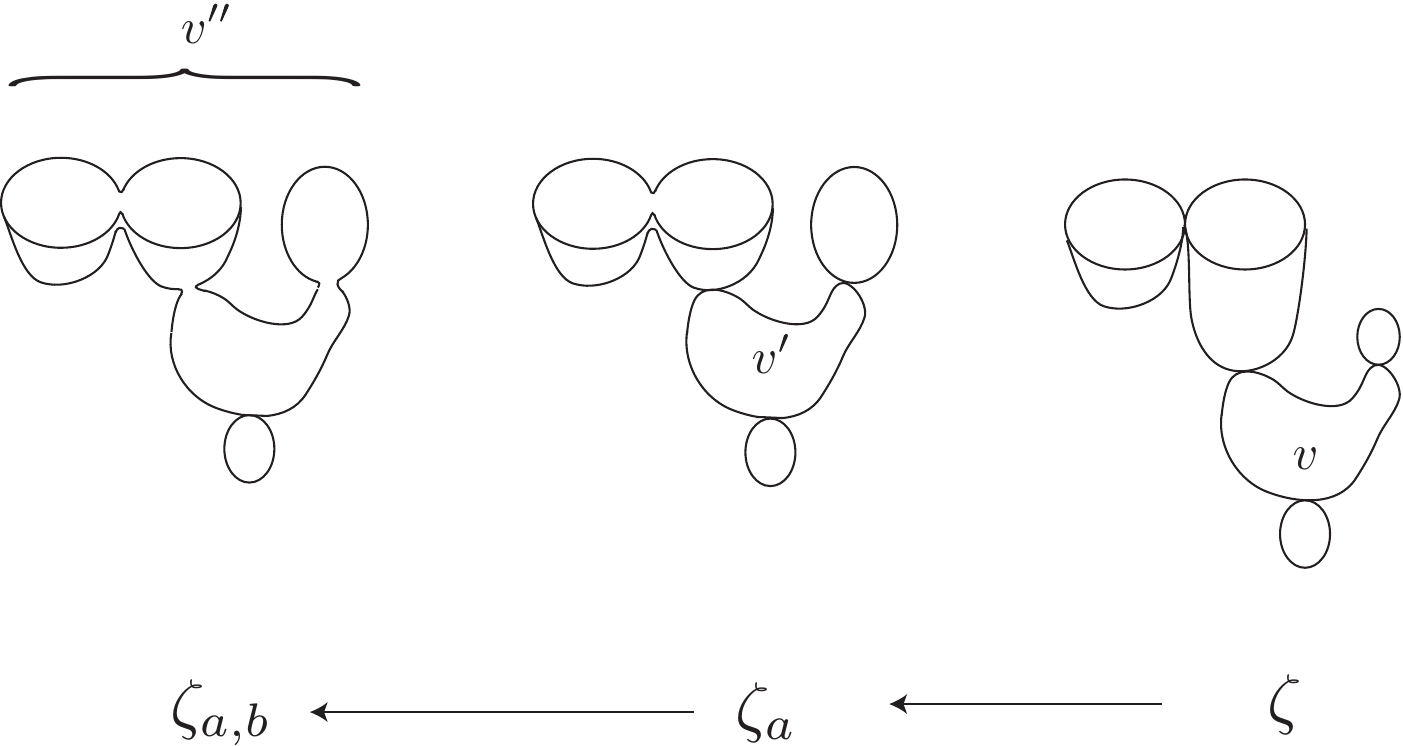}
        \caption{$\zeta_{a,b}, \zeta_{a}, \zeta$}
        \label{Figure5-7}
        \end{figure}

	The assumptions in \eqref{form51818} gives us non-zero complex numbers $\rho_{a,j}$ and $\rho_{ab,j'}$ 
	where $1\leq j \leq \vert \lambda\vert$ and $1\leq j' \leq \vert \lambda'\vert$.
	We denote these numbers with $\rho_{a,\emptyset;j}$ and $\rho_{ab,a;j'}$ to distinguish 
	them from $\rho_{ab,\emptyset;j}$ which shall be introduced momentarily to prove 
	$\lim_{a\to \infty} \zeta_{a,b(a)} = \zeta$. We extend these constants to the case that $j=0$
	and $j'=0$ by setting $\rho_{a,\emptyset;0}=\rho_{ab,a;0}=1$.

	Now we define
	\begin{equation}
		\rho_{ab,\emptyset;j}= \rho_{a,\emptyset;j}\cdot \rho_{ab,a;{\rm levsh}(j)} \in \bbC_{*}.
	\end{equation}
	For each $a$, there is an integer $N(a)$ such that if $b\geq N(a)$, then for any $k$, $k'$ with 
	$0\leq k<k'\leq |\lambda'|$ and ${\rm levsh'}(k)={\rm levsh'}(k')$, we have:
	\[
	  \left | \frac{\rho_{ab,a;k}}{\rho_{ab,a;k'}}\right| \geq 
	  a\cdot \max_{0\leq j'<j\leq |\lambda|}(\left | \frac{\rho_{a,\emptyset;j}}{\rho_{a,\emptyset;j'}}\right |)
	\]
	Thus it is easy to see that if $b(a)\geq N(a)$, then we can conclude for $j < j'$ with 
	${\rm levsh'}\circ {\rm levsh}(j) = {\rm levsh'}\circ{\rm levsh}(j')$ that:
	\[
	  \lim_{a \to \infty} \frac{\rho_{ab(a),\emptyset;j}}{\rho_{ab(a),\emptyset;j'}} = \infty.
	\]
	Next, we show that Definition \ref{defn523} (1) and (2) are satisfied for appropriate choices of $b(a)\geq N(a)$.
		
	Let $v \in C^{\rm int}_0(\hat R)$, $v' = {\rm treesh}(v)$, $v'' = {\rm treesh}'(v')$, $j = \lambda(v)$ and $j'= \lambda'(v')$.
	We also assume that $c(v) = {\rm D}$, $c(v') = {\rm D}$ and $c(v'') = {\rm d}$.
	We also fix an $\epsilon$-trivialization $(K,\mathcal U,\Phi)$ of the universal family in the sense of 
	Definition \ref{defn55} at the source curve $\xi$ of $\zeta$.
	Let $U_v:\Sigma(v) \to \mathcal N_{\mathcal D}(X)$ be defined from the data of $\zeta$ in the same way as in 
	\eqref{512form22}. We define $\widetilde U_{a,v}:K(v) \to \mathcal N_{\mathcal D}(X)$ 
	analogous to \eqref{512form22-2}. 
	Finally, we define $\tilde u_{ab,v}:K(v) \to \mathcal N_{\mathcal D}(X)$ from the data of $\zeta_{ab}$ as in \eqref{512-0}.

	By assumption
	\begin{equation} \label{ineq-1}
	 d_{C^2}\left({\rm Dil}_{1/\rho_{a,\emptyset;j}}\circ \widetilde U_{a,v},U_v\right) < \delta
	\end{equation}
	for sufficiently large $a$ and
	\begin{equation} \label{ineq-2}
	  d_{C^2}\left({\rm Dil}_{1/\rho_{ab,a,j'}}\circ \tilde u_{ab,v},U'_{a,v'}\right) < \frac{1}{a}
	\end{equation}
	if $b$ is a large integer depending on $a$. We denote one such $b$ by $b(a)$, which is also greater than $N(a)$.
	Since ${\rm Dil}_c$ is an isometry, these two inequalities imply
	\[
	  d_{C^2}\left({\rm Dil}_{1/\rho_{ab(a),\emptyset;j}}\circ\tilde u_{ab(a),v},U_v\right) < 2\delta
	\]
	if $a$ is large enough so that \eqref{ineq-1} and \eqref{ineq-2} hold and $\frac{1}{a}<\delta$.
	This verifies Definition \ref{defn523} (1) in the case $c(v) = {\rm D}$, $c(v') = {\rm D}$ and $c(v'') = {\rm d}$.
	The other cases and Definition \ref{defn523} (2)
	can be proved in the same way.
\end{proof}
The above lemma completes the definition of the RGW topology. The following theorem asserts that the RGW topology has the desired properties in the case of moduli spaces of discs. 
\begin{theorem}\label{thm-comp-top}
	The topological space $\mathcal M^{\rm RGW}_{k+1,h}(L;\beta)$ is compact and metrizable.
	Evaluation at any of the boundary marked points (resp. interior marked points) determines a continuous map
	$\mathcal M^{\rm RGW}_{k+1,h}(L;\beta) \to L$ (resp. $\mathcal M^{\rm RGW}_{k+1,h}(L;\beta) \to X$).
	Moreover, for any DD-ribbon tree $\mathcal R^+$, the space 
	\begin{equation}\label{closure}
		{\mathcal M}(\mathcal R^+) := {\mathcal M}^{0}(\mathcal R^+) \cup
		\bigcup_{\mathcal (R')^+ < \mathcal R^+}{\mathcal M}^{0}(\mathcal (R')^+).
	\end{equation}
	is a closed subset of $\mathcal M^{\rm RGW}_{k+1,h}(L;\beta)$.
\end{theorem}
\begin{proof}

	Suppose $ \mathcal R$ is a DD-ribbon tree of type $(\beta,k)$ with detailed ribbon tree $\hat R$. 
	Suppose also ${\rm mk}:\{1,\dots,h\}\to C_0^{\rm int}(\hat R)$ is a 
	marking map for interior points. Then $\mathcal R^+:=(\mathcal R,{\rm mk})$ describes a 
	stratum of $\mathcal M^{\rm RGW}_{k+1,h}(L;\beta)$, and we assume that 
	$\zeta_0$ is a source stable element of this stratum. 
	We wish to construct a countable neighborhood basis for $\zeta_0$.
	
	We fix a $\frac{1}{n}$-trivialization $(K,\mathcal U,\Phi)$ of the universal family at $\zeta_0$.
	We define $B_n(\zeta_0)$ to be the set of the elements 
	$\zeta$ of $\mathcal M^{\rm RGW}_{k+1,h}(L;\beta)$, which satisfy the following properties:
	\begin{itemize}
		\item[(i)] There is a DD-ribbon tree $(\mathcal R')^+$ such that $\zeta \in \mathcal M^0((\mathcal R')^+)$ 
			      and $(\mathcal R')^+\geq \mathcal R^+$. Suppose a representative 
			      $\widetilde{\zeta} = (\zeta(v'))$ is fixed for $\zeta$.
		\item[(ii)] The source curve of $\zeta$ belongs to $\mathcal U$.
		\item[(iii)] The distance between the stable maps $\frak{forget}(\zeta_0)$ and $\frak{forget}(\zeta)$
				with respect to a fixed metric representing the sable map topology is less than $\frac{1}{n}$.
		\item[(iv)] For each $v\in C_0^{\rm int}(\hat R^+))$, we define $K(v)$ in the same way as in \eqref{Kv}.
				Let $\vert\lambda\vert$ be the number of the levels of $\mathcal R^+$. 
				Then for each $0\leq j \leq |\lambda|$, there is a constant $\rho_j$ such that the following conditions are satisfied.
				\begin{enumerate}
					\item[(1)] Let $c(v)={\rm D}$, $c(v')={\rm s}$ or ${\rm d}$ with ${\rm treesh}(v)=v'$.
						We define $\tilde u_v:K(v)\to \mathcal N_{\mathcal D}(X)$ and 
						$U_v:\Sigma(v)\to \mathcal N_{\mathcal D}(X)$ as in \eqref{512-0}
						and \eqref{512form22}. Then the $C^2$-distance of 
						${\rm Dil}_{1/\rho_{\lambda(v)}}\circ \tilde u_{v}$ and $U_v$
						is less than $\frac{1}{n}$.
					\item[(2)] Let $c(v)={\rm D}$, $c(v')={\rm D}$ with ${\rm treesh}(v)=v'$.
						We define $\widetilde U_v:K(v)\to \mathcal N_{\mathcal D}(X)$ and 
						$U_v:\Sigma(v)\to \mathcal N_{\mathcal D}(X)$ as in \eqref{512form22-2}
						and \eqref{512form22}. Then the $C^2$-distance of 
						${\rm Dil}_{1/\rho_{\lambda(v)}}\circ \widetilde U_{v}$ and $U_v$
						is less than $\frac{1}{n}$.
					\item[(3)] $\rho_0=1$ and for $1\leq j \leq |\lambda|$ with 
						${\rm levsh}(j)={\rm levsh}(j-1)$, we have $\rho_{j}> n\cdot \rho_{j-1}$.
				\end{enumerate}			
	\end{itemize}
	It is easy to see that $B_n(\zeta_0)$ is an open set containing $\zeta$. Moreover, any open neighborhood
	of $\zeta_0$ contains $B_n(\zeta_0)$ for large values of $n$. Using the by now familiar trick of forgetting 
	interior marked points, we can extend this construction for any point 
	$\zeta_0\in \mathcal M^{\rm RGW}_{k+1,h}(L;\beta)$. Thus $\mathcal M^{\rm RGW}_{k+1,h}(L;\beta)$
	is a first countable topological space. 
	
	The topology of each stratum of $\mathcal M^{\rm RGW}_{k+1,h}(L;\beta)$ is given by the 
	stable map topology. In particular, it is a separable metric space. Since 
	there are also countably many strata, we can form a sequence $\{\zeta_i\}$ of the elements 
	such that the subsequence of the elements belonging to a given stratum forms a dense subset.
	Then it is easy to see that $\{B_n(\zeta_i)\}$ gives a countable basis for the RGW topology of 
	$\mathcal M^{\rm RGW}_{k+1,h}(L;\beta)$. Since $\mathcal M^{\rm RGW}_{k+1,h}(L;\beta)$ is 
	sequentially compact (Lemma \ref{prop516rev}) and second countable, it is a compact topological space.
	
	Next we show that $\mathcal M^{\rm RGW}_{k+1,h}(L;\beta)$ is Hausdorff. 
	Since the first axiom of countability is satisfied, it suffices to show that any convergent sequence 
	has a unique limit. Let $\zeta_a$ be a sequence which converges to both $\zeta$ and $\zeta'$.
	The stable maps $\frak{forget}(\zeta)$ and $\frak{forget}(\zeta')$ are equal to each other, 
	because the stable map topology is Hausdorff (\cite[Lemma 10.4]{FO}).
	Using a lemma similar to Lemma \ref{lem57}, we may assume that $\zeta_a$, $\zeta$, $\zeta'$ 
	are all source stable. In this case, it is straightforward to see that $\zeta=\zeta'$.
	
	The space $\mathcal M^{\rm RGW}_{k+1,h}(L;\beta)$ is compact and Hausdorff, hence 
	it is a regular space. Therefore, Urysohn's metrization theorem implies that 
	$\mathcal M^{\rm RGW}_{k+1,h}(L;\beta)$ is metrizable.
	
	The claim about continuity of evaluation maps at the marked points follows from the facts that the RGW topology is 
	stronger than the stable map topology and these evaluation maps are continuous with respect to the 
	stable map topology. Finally it is an immediate consequence of the definition that the space in \eqref{closure} is closed. 
\end{proof}

One can prove similar results as in the above theorem for the case of strips and spheres with the same arguments.

\begin{theorem}
	The topological space $\mathcal M^{\rm RGW}_{k_1,k_0,h}(L_1,L_0;p,q;\beta)$ is compact and metrizable.
	Evaluation at any of the boundary or interior marked points determines a continuous map.
	Moreover, for any SD-ribbon tree $\mathcal R^+$, the space 
	\begin{equation}\label{closure}
		{\mathcal M}(\mathcal R^+) := {\mathcal M}^{0}(\mathcal R^+) \cup
		\bigcup_{\mathcal (R')^+ < \mathcal R^+}{\mathcal M}^{0}(\mathcal (R')^+).
	\end{equation}
	is a closed subspace of $\mathcal M^{\rm RGW}_{k_1,k_0,h}(L_1,L_0;p,q;\beta)$.
\end{theorem}

\begin{figure}[h]
\centering
\includegraphics[scale=0.4]{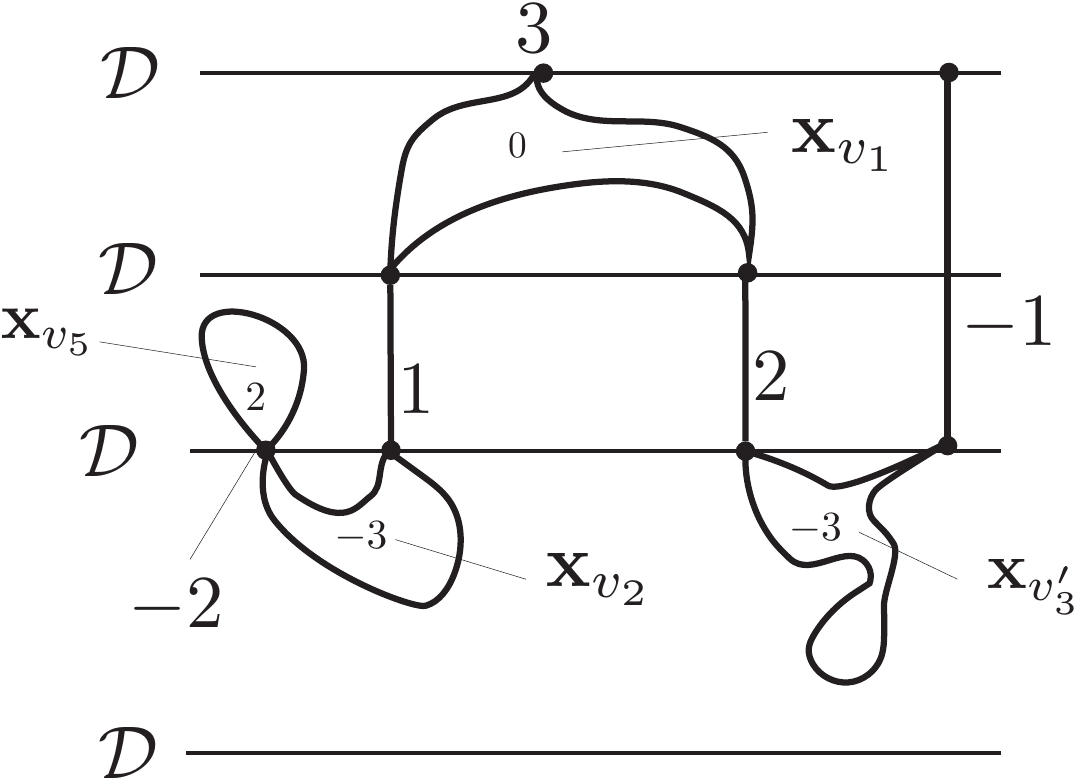}
\caption{Configuration associated to the decorated rooted tree
in Figure \ref{Figure12}}
\label{Figure13}
\end{figure}

\begin{theorem}
	The topological space $\mathcal M(\mathcal D\subset X;\alpha;{\bf m})$ is compact and metrizable.
	Evaluation at any of the (interior) marked points determines a continuous map from the moduli space to $\mathcal D$.
	There also exists a continuous map
        \begin{equation}\label{map321321}
    	    \frak{forget} :\mathcal M(\mathcal D\subset X;\alpha;{\bf m})\to \mathcal M_{\ell+1}(\mathcal D;\alpha).
        \end{equation}
	such that it coincides with \eqref{form3737} on ${\mathcal M}^0(\mathcal T^0_{\alpha;{\bf m}})$ 
	via the identification \eqref{identify30}.
	Moreover, for any decorated rooted tree $\mathcal T$, the space
	\begin{equation}
		\mathcal M(\mathcal D;\mathcal T):=\mathcal M^{0}(\mathcal D;\mathcal T) \cup
		\bigcup_{\mathcal T' < \mathcal T}\mathcal M^{0}(\mathcal D;\mathcal T')
	\end{equation}
	is a closed subspace of $\mathcal M(\mathcal D\subset X;\alpha;{\bf m})$.
\end{theorem}

\begin{proof}
	To prove existence of \eqref{map321321},	let 
	$({\bf x}_v;v\in C_0^{\rm ins}(T)) \in \widetilde{\mathcal M}^0(D;\mathcal T)$ 
	where ${\bf x}_v = [(\Sigma_v,\vec w_v);u_v;s_v]$. We glue $(\Sigma_v,\vec w_v)$ along the tree $T$ in an  
	obvious way to obtain a marked Riemann surface $(\Sigma,\vec w)$.
	Since an element $({\bf x}_v;v\in C_0^{\rm ins}(T))$ 
	lies in the fiber product (\ref{form31666}), various maps
	$u_v$ can be glued to define a continuous map $u : \Sigma \to \mathcal D$.
        We thus obtain an element of $\mathcal M_{\ell+1}(\mathcal D;\alpha)$.
        It is easy to see that this construction induces a map as in (\ref{map321321}).
        The continuity of the map (\ref{map321321}) is immediate from the 
        definition of the RGW topology.
        The remaining claims can be verified in the same way as in Theorem \ref{thm-comp-top}.
\end{proof}

\begin{example}
	Figure \ref{Figure13} sketches an element of $\mathcal M^{0}(D;\mathcal T)$ corresponding to the 
	decorated rooted tree in Figure \ref{Figure12}. This element of the moduli space
	is obtained from Figure \ref{Figure11} by resolving the double point which is 
	the intersection of ${\bf x}_{v_3}$ and ${\bf x}_{v_4}$.
	The new component ${\bf x}_{v'_3}$ in Figure \ref{Figure13} is obtained by this gluing construction.
\end{example}

\begin{remark}\label{thm323}
	Using Lemma \ref{lem313}, the restriction of the map in \eqref{map321321} gives an 
	open embedding from $\mathcal M^{0}(\mathcal D\subset X;\alpha;{\bf m})$ into 
	$\mathcal M^0_{\ell+1}(\mathcal D;\alpha)$. A standard dimension formula from Gromov-Witten theory
	shows that the virtual dimension of $\mathcal M^0_{\ell+1}(\mathcal D;\alpha)$ is equal to 
	\begin{equation}\label{dimensionformula}
		c_1(\mathcal D) \cdot \alpha + 2\dim_{\bbC} X + 2(\ell+1) - 8.
	\end{equation}
	To be more precise, the compact space $\mathcal M_{\ell+1}(\mathcal D;\alpha)$ admits a Kuranihsi structure 
	(without boundary) whose dimension is given in \eqref{dimensionformula} \cite{FO}. 
	In the subsequent paper of this series, we define 
	a Kuranishi structure on  $\mathcal M(\mathcal D\subset X;\alpha;{\bf m})$ such that 
	its restriction to the open subspace $\mathcal M^{0}(\mathcal D\subset X;\alpha;{\bf m})$ 
	agrees with the Kuranishi structure of 
	$\mathcal M_{\ell+1}(\mathcal D;\alpha)$ via the the homeomorphism $\frak{forget}$ in \eqref{map321321}.
	(The relationship between the map $\frak{forget}$ and the Kuranishi structures 
	on the compactifications is more delicate.)
	In particular, the dimension of $\mathcal M(\mathcal D\subset X;\alpha;{\bf m})$ is also given by the formula
	in \eqref{dimensionformula}.
\end{remark}

\bibliography{references}
\bibliographystyle{alpha.bst}
\end{document}